\setlist[itemize]{leftmargin=+0.1in}
\newtheorem{theorem}{Theorem}[section]
\newtheorem{lemma}[theorem]{Lemma}
\newtheorem{proposition}[theorem]{Proposition}
\newtheorem{corollary}[theorem]{Corollary}
\newtheorem{conjecture}[theorem]{Conjecture}
\newtheorem{notation}[theorem]{Notation}
\theoremstyle{definition}
\newtheorem{definition}[theorem]{Definition}
\newtheorem{remark}[theorem]{Remark}
\numberwithin{equation}{section}
\newtheorem{assumption}[theorem]{Assumption}
\newtheorem{setting}[theorem]{Setting}
\begin{document}

\normalfont

\title{$\infty$-Categorical Perverse $p$-adic Differential Equations over Stacks}
\author{Xin Tong}
\date{}

\maketitle

\newpage

\subsection*{abstract}
\rm We will discuss $\infty$-categorical perverse $p$-adic differential equations over stacks. On one hand, we are going to study some $p$-adic analogous results of the Drinfeld's original lemma about the \'etale fundamental groups in the \'etale setting, in the context of $F$-isocrystals closely after Kedlaya and Kedlaya-Xu. We expect similar things could also be considered for diamonds after Scholze, in the context of Kedlaya-Liu's work namely the derived category of pseudocoherent Frobenius sheaves, which will induce some categorical form of Drinfeld's lemma for diamonds motivated by work of Carter-Kedlaya-Z\'abr\'adi and Pal-Z\'abr\'adi. On the other hand, we are going to establish the $\infty$-categorical theory of arithmetic $D$-modules after Abe and Gaitsgory-Lurie, which will allow one to construct the rigid Gross $G$-motives. And we are expecting to apply the whole machinery to revisit Weil's conjecture parallel to and after Gaitsgory-Lurie.

\newpage

\footnotetext[1]{\textit{Keywords and Phrases}: $p$-adic differential equation, $\infty$-categorical arithmetic $D$-modules, $\infty$-categorical arithmetic $D^\dagger$-modules,  $L$-functions.}

\newpage

\tableofcontents

\newpage

\chapter{Perverse $p$-adic Differential Equations over Quotient Stacks}

Drinfeld's celebrated lemma relates the corresponding product of \'etale fundamental groups of different schemes with the corresponding fundamental group of some single stack coming the quotient of the product space of the involved schemes. This is very important since in \cite{La1} V.Lafforgue relied heavily on the corresponding such lemma to give the corresponding parametrisation after Langlands in the context of function field arithmetic. Following this, Kedlaya conjectured that we should also have the chance to do this in the context of $p$-adic cohomology theory (also see \cite{Ked9} and \cite{KX}), which will generalize the work of \cite{Abe} for linear algebraic groups.


\section{The Drinfeld's Lemma}

\noindent In this section we recall some basics around the corresponding original Drinfeld's Lemma in the context of the \'etale local systems over schemes in characteristic $p>0$.

\begin{assumption}
We will mainly work with smooth schemes which are also assumed to be connected, reduced and separated over $\mathbb{F}_p$.
\end{assumption}

\indent Then in the current situation, suppose we have different schemes $X_1,...,X_I$ over $\mathbb{F}_p$ then we could consider the corresponding product of these schemes in the absolute sense:
\begin{align}
X:=X_1\times_{\mathrm{Spec}\mathbb{F}_p}...\times_{\mathrm{Spec}\mathbb{F}_p}X_I.	
\end{align}
What happens is that one might want to ask if we could have the chance to relate the \'etale local systems over $X$ and the ones over each scheme. However this is not very direct in the sense that we have to consider more construction in order to make such relationship transparent. This is how Drinfeld's Lemma kicks in. Here is how the idea works. First we have the corresponding partial Frobenius (which was observed by Drinfeld) $\varphi_i$ for $i=1,...,I$. We need to consider some sort of multi Frobenius structures in order to guarantee that we could have the desired relationship. The first construction is the following stack for any $i\in I$:
\begin{displaymath}
X/\Phi:=X/\left<\varphi_1,...,\widehat{\varphi}_i,...\varphi_1\right>,
\end{displaymath}
see \cite[Sheaves, Stacks and Shtukas, Definition 4.2.10]{perf} as well for more details. For such stack the corresponding construction is bit complicated.

\begin{remark}
Recall from \cite[Sheaves, Stacks and Shtukas, Definition 4.2.10]{perf} that the corresponding \'etale coverings of the stack $X/\Phi$ could be related to the corresponding \'etale coverings of the scheme $X$, but one has to consider the further equivariant Frobenius action coming from $\varphi_1,...,\varphi_I$ (with the corresponding composite specified as those relative Frobenius $\varphi_{Y/X}$ over $X$ for any covering $Y $), namely for such $Y$ for $X$ we need to assume that $Y$ admits isomorphisms up to all the pullbacks by the partial Frobenius actions. 	
\end{remark}

\indent So we can have the reasonable category $\text{F\'ET}(X/\Phi)$, and we will use the corresponding notation $\text{F\'ET}(X)_{\Phi}$ to denote the corresponding equivalent equivariant category:
\begin{align}
\text{F\'ET}(X)_{\Phi}	\overset{\sim}{\longrightarrow} \text{F\'ET}(X/\Phi).
\end{align}

\indent So we can have the reasonable profinite fundamental group $\pi_1^\mathrm{prof}(X/\Phi,\overline{x})$, and under the corresponding framework of Tannakian cateogories, we will use the notation $\pi_1^\mathrm{prof}(X,\overline{x})_\Phi$ to denote the Tannakian fundamental group of the category $\text{F\'ET}(X)_{\Phi}$ with
\begin{align}
\text{F\'ET}(X)_{\Phi}	\overset{\sim}{\longrightarrow} \text{F\'ET}(X/\Phi).
\end{align}

\begin{proposition} \mbox{\bf{(The Drinfeld's Lemma)}}
We have the isomorphism between the fundamental group of the stack $X/\Phi$ and the product of the ones from each space $X_i$, $i=1,...,I$. Therefore we have the corresponding equivalence between the category of all the $\ell$-adic representations of $\pi_1^\mathrm{prof}(X/\Phi,\overline{x})$	and the category of all the $\ell$-adic representations of the product of fundamental groups of the each separate schemes involved, for any prime number $\ell$ (which could be the same as $p$).
\end{proposition}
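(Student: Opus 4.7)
The plan is to establish the statement at the level of finite étale sites by showing that the natural external product functor
\[
\text{F\'ET}(X_1) \times \cdots \times \text{F\'ET}(X_I) \longrightarrow \text{F\'ET}(X)_{\Phi}, \quad (Y_1,\ldots,Y_I) \longmapsto Y_1 \times_{\mathrm{Spec}\,\mathbb{F}_p} \cdots \times_{\mathrm{Spec}\,\mathbb{F}_p} Y_I,
\]
equipped with its canonical partial Frobenius equivariant structure, is an equivalence of Galois categories. I would first reduce to the case $I = 2$ by an induction that groups the factors $X_3, \ldots, X_I$ into a single block: the quotient stack only quotients by all but one partial Frobenius, and these group compatibly under the partition once one observes that the composite $\varphi_1 \circ \cdots \circ \varphi_I$ is the absolute Frobenius of $X$, whose quotient is trivial on the finite étale site.

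For $I = 2$, full faithfulness is the easier half. A morphism $Y_1 \boxtimes Y_2 \to Y'_1 \boxtimes Y'_2$ in $\text{F\'ET}(X)_{\Phi}$ is a global section of a finite étale sheaf on $X_1 \times_{\mathbb{F}_p} X_2$; by restricting to a geometric fiber $X_1 \times \{\overline{x}_2\}$ and exploiting the $\varphi_2$-equivariance, one descends the morphism to one of covers of $X_1 \otimes_{\mathbb{F}_p} \overline{\mathbb{F}}_p$, whose $\varphi_2$-equivariant structure in turn forces descent to $X_1$. A symmetric argument on the other factor yields full faithfulness.

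The main obstacle is essential surjectivity. Given a finite étale cover $Y \to X_1 \times_{\mathbb{F}_p} X_2$ together with isomorphisms $\alpha_i \colon \varphi_i^* Y \xrightarrow{\sim} Y$ whose composite recovers the relative Frobenius of $Y$ over $X$, I need to construct $Y_1 \to X_1$ and $Y_2 \to X_2$ with $Y \simeq Y_1 \boxtimes Y_2$. The strategy, following Drinfeld and revisited in \cite{perf}, is to fix a geometric point $\overline{x}_2 \in X_2(\overline{\mathbb{F}}_p)$ and define $Y_1 := Y|_{X_1 \times \{\overline{x}_2\}}$; the datum $\alpha_2$ shows this restriction is independent of $\overline{x}_2$ up to canonical isomorphism, since $\varphi_2$ acts as the Frobenius on the second factor and therefore transitively on Frobenius orbits of $\overline{\mathbb{F}}_p$-points, which is exactly the data needed to promote $Y_1$ to a genuine finite étale cover of $X_1$ rather than merely of $X_1 \otimes \overline{\mathbb{F}}_p$. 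One defines $Y_2$ symmetrically, and then verifies that the natural comparison map $Y_1 \boxtimes Y_2 \to Y$ is an isomorphism by combining both equivariant structures with the assumed connectedness of the factors (so that the comparison, being an isomorphism on one geometric fiber and compatible with both partial Frobenii, is an isomorphism everywhere).

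Once the equivalence of finite étale categories is established, the isomorphism
\[
\pi_1^{\mathrm{prof}}(X/\Phi, \overline{x}) \;\cong\; \prod_{i=1}^{I} \pi_1^{\mathrm{prof}}(X_i, \overline{x}_i)
\]
follows formally from Galois--Tannaka duality applied to $\text{F\'ET}(X)_{\Phi} \simeq \text{F\'ET}(X/\Phi)$, since the image of the external product functor on neutral fiber functors realizes the product group as the Tannakian fundamental group of the target category. The statement for $\ell$-adic representations (including $\ell = p$) is then immediate: continuous $\ell$-adic representations of a profinite group are determined by the group itself, so the product formula for the fundamental groups translates directly into the asserted equivalence of representation categories.
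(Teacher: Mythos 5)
The paper itself gives no proof of this proposition: it is recalled as the classical Drinfeld lemma, with the stack $X/\Phi$ and its finite \'etale site imported from \cite{perf}, so your attempt has to be measured against the standard argument of Drinfeld--Lafforgue--Kedlaya. Measured that way, it has a genuine gap at its foundation: the external product functor $\text{F\'ET}(X_1)\times\cdots\times\text{F\'ET}(X_I)\to \text{F\'ET}(X)_{\Phi}$ is \emph{not} an equivalence of categories, so the statement you set out to prove is false. Under the equivalence $\text{F\'ET}(X/\Phi)\simeq\{$finite continuous $\prod_i\pi_1^{\mathrm{prof}}(X_i,\overline{x}_i)$-sets$\}$ that the lemma ultimately provides, box products correspond to product sets with the product action; by Goursat, an open subgroup of $\pi_1^{\mathrm{prof}}(X_1)\times\pi_1^{\mathrm{prof}}(X_2)$ which is the graph of an isomorphism between nontrivial finite quotients yields a connected equivariant cover that is not of box-product form, so essential surjectivity fails. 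Full faithfulness fails as well: a $G_1\times G_2$-equivariant map $S_1\times S_2\to T_1\times T_2$ need not be a product of maps (take $Y_i$ to be two disjoint copies of $X_i$ and compare endomorphism sets: $4^4$ on the target side versus $4\times 4$ in the source category). Consequently the concluding appeal to Galois--Tannaka duality, which requires the functor to be an equivalence, does not go through.

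The correct shape of the argument, and the one used in the sources the paper leans on, is to prove that the natural homomorphism $\pi_1^{\mathrm{prof}}(X/\Phi,\overline{x})\to\prod_{i}\pi_1^{\mathrm{prof}}(X_i,\overline{x}_i)$ is an isomorphism: surjectivity is comparatively formal, while injectivity amounts to showing that every $\Phi$-equivariant finite \'etale cover of $X$ is \emph{split} after pullback along some box product of covers --- not that it \emph{is} a box product. This is exactly where your sketch is thinnest: the step asserting that the comparison map $Y_1\boxtimes Y_2\to Y$ (whose construction you never give) is an isomorphism because it is one on a single geometric fiber and is compatible with the partial Frobenii is not a valid deduction; partial-Frobenius orbits are not dense in any sense that yields such a rigidity statement, and the real content here is Drinfeld's finiteness argument that an equivariant cover is defined over a finitely generated base and becomes constant in the $X_2$-direction after a finite \'etale base change on $X_2$, combined with the point case $X_2=\mathrm{Spec}\,\overline{\mathbb{F}}_p$ (which you do state, and which is correct: finiteness of the field of definition plus Frobenius-equivariance gives Galois descent to $X_1$). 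The ingredients you do have --- reduction to $I=2$, triviality of the total Frobenius on the finite \'etale site, the point case --- are all part of the standard proof; but as organized, the categorical claim you are proving is false, and the key splitting lemma replacing it is missing.
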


\newpage

\section{Convergent Isocrystals}

\indent Now we switch to just the $p$-adic setting in the context of convergent isocrystals.

\begin{setting}
Recall from \cite[Definition 2.1]{Ked1}, we have the corresponding category of all the convergent $F$-isocrystals over some scheme $S$ over $\mathbb{F}_p$. We use the notation $\textbf{FIsoc}(S)$ to denote the category of all the convergent $F$-isocrystals over $S$ (the exact definition could be found in \cite[Definition 2.1]{Ked1}). And at the same time we also have the corresponding category of overconvergent $F$-isocrystals over the same space $S$, which we denote it by $\textbf{FIsoc}^\dagger(S)$. 
\end{setting}

\indent A corresponding convergent $F$-isocrystal over $S$ is vector bundle $M$ over the corresponding rigid analytic generic fiber $S^\mathrm{an}$ over $\mathbb{Q}_p$ of the corresponding lift $\mathcal{S}$ of $S$ to $W(\mathbb{F}_p)$ in the glueing fashion carrying an integrable connection $\nabla$, with the corresponding action coming from the Frobenius lift $\sigma$. In our situation we also have to consider the corresponding vector bundles over some quotient stack $X/\Phi$ ($X$ is the product space in the previous section). Certainly this will mean that we have to consider the corresponding quotient from the corresponding $X$ to this quotient.

\begin{definition}
Over the stack $X/\Phi$ we define a corresponding convergent $F$-isocrystal $M$ to be a convergent $F$-isocrystal $(M,\nabla,\sigma)$ over $X$ with the corresponding action from each partial Frobenius $\varphi_i$ ($i\in I$) compatible with the corresponding partial Frobenius actions on the space $X$. We can consider the corresponding category of all such objects, which will be denoted by $\textbf{FIsoc}(X/\Phi)$. 	
\end{definition}

\begin{proposition}
With the corresponding notations established above. Consider the following three categories. The first is the category of all the $p$-adic representations of $\prod_{I} \pi_1^\mathrm{prof}(X_i,\overline{x}_i)$. The second is the category of all the $p$-adic representations of $\pi_1^\mathrm{prof}(X/\Phi,\overline{x})$. The third is the category of all the convergent $F$-isocrystals over $X/\Phi$ which are unit-root for $\sigma$ when regarded as the corresponding objects over $X$ carrying partial Frobenius actions. Then we have that all the three categories are equivalence. The corresponding functors are the one induced from the usual Drinfeld's lemma on the isomorphism of the groups on the both sides, and the one coming from Katz-Crew's equivalence.	
\end{proposition}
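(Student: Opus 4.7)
The plan is to establish the two equivalences (1)$\leftrightarrow$(2) and (2)$\leftrightarrow$(3) separately and then compose. The first is a purely \'etale/group-theoretic statement, the second is the $p$-adic coefficient theorem of Katz--Crew, and the interface between them is the partial-Frobenius equivariance.

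For (1)$\leftrightarrow$(2) I would invoke the preceding Drinfeld's Lemma in the case of the prime $\ell = p$. The asserted isomorphism of profinite groups
\[
\pi_1^\mathrm{prof}(X/\Phi,\overline{x}) \overset{\sim}{\longrightarrow} \prod_{i \in I} \pi_1^\mathrm{prof}(X_i,\overline{x}_i)
\]
is tensor- and exact-sequence-compatible, so the induced functor on continuous $p$-adic representations is an equivalence of neutral Tannakian categories.

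For (2)$\leftrightarrow$(3) the key input is Katz--Crew: over a connected smooth $\mathbb{F}_p$-scheme $Y$, continuous $p$-adic representations of $\pi_1^\mathrm{prof}(Y,\overline{y})$ are functorially equivalent to unit-root convergent $F$-isocrystals on $Y$. First I would apply this to $Y = X$ to match unit-root objects of $\textbf{FIsoc}(X)$ with $\mathrm{Rep}_{\mathbb{Q}_p}(\pi_1^\mathrm{prof}(X,\overline{x}))$. Next I would promote this equivalence to one of equivariant categories: since each partial Frobenius $\varphi_i$ is a morphism $X \to X$ and Katz--Crew is natural in the base, an isomorphism $\varphi_i^* M \overset{\sim}{\longrightarrow} M$ over $X$ corresponds to a $\varphi_i$-equivariance datum on the associated representation, and the relation $\varphi_1 \circ \cdots \circ \varphi_I = \sigma$ matches the unit-root total-Frobenius datum on $M$ with the action of $\sigma$ on the representation. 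Descent along the quotient $X \to X/\Phi$ then identifies $\varphi$-equivariant unit-root objects of $\textbf{FIsoc}(X)$ with $\textbf{FIsoc}(X/\Phi)$ on the isocrystal side, and equivariant representations of $\pi_1^\mathrm{prof}(X,\overline{x})$ with representations of $\pi_1^\mathrm{prof}(X/\Phi,\overline{x})$ on the representation side; composing with the Katz--Crew equivalence then yields (2)$\leftrightarrow$(3).

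The main obstacle I expect is the functoriality-and-descent step in the preceding paragraph: checking that Katz--Crew truly commutes with pullback by each $\varphi_i$ in a coherent way, and that the resulting equivariant structure on a unit-root convergent $F$-isocrystal is precisely the additional datum required by the definition of an object of $\textbf{FIsoc}(X/\Phi)$. This will require a careful tracking of two distinct Frobenius-type actions on the same underlying rigid-analytic bundle, namely the total $\sigma$-structure supplied by the unit-root condition and the individual $\varphi_i$-structures, together with a verification that the unit-root condition on $\sigma$ is preserved by, and can be recovered from, the partial data via $\sigma = \varphi_1 \circ \cdots \circ \varphi_I$.
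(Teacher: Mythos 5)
Your proposal follows essentially the same route as the paper's proof, which simply combines the Drinfeld's lemma of the previous section (applied with $\ell = p$) with the Katz--Crew equivalence between unit-root convergent $F$-isocrystals and $p$-adic \'etale local systems. Your additional discussion of transporting the partial-Frobenius equivariance data across the Katz--Crew functor is a more careful spelling-out of a step the paper leaves implicit, but it is the same argument.
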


\begin{proof}
This would be just a combination of the usual Drinfeld's lemma as in the previous section, and the corresponding Katz-Crew's equivalence \cite{Ka}, \cite{Cr2} on the unit-root convergent isocrystals and the corresponding $p$-adic \'etale local systems as in \cite[Theorem 2.1]{Cr2}.	
\end{proof}

\begin{corollary}  \label{corollary2.4}
Let $I$ be a set of cardinality 2. And we put $X_2$ to be $\mathrm{Spec} k=\mathrm{Spec}\overline{\mathbb{F}}_p$. Then we have the category of the corresponding unit root convergent $F$-isocrystals over $X_{1,k}/\varphi_k$ and the category of the corresponding unit root convergent $F$-isocrystals over $X_1$ are equivalent.	
\end{corollary}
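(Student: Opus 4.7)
The plan is to reduce the statement directly to the previous proposition by exploiting the fact that the second factor $X_2 = \mathrm{Spec}\,\overline{\mathbb{F}}_p$ has trivial étale fundamental group. First I would set $I = \{1,2\}$ with $X_1$ as given and $X_2 = \mathrm{Spec}\,\overline{\mathbb{F}}_p$, so that the total product space in the sense of Section 1 becomes
\begin{align}
X = X_1 \times_{\mathrm{Spec}\,\mathbb{F}_p} \mathrm{Spec}\,\overline{\mathbb{F}}_p = X_{1,k},
\end{align}
equipped with the two partial Frobenius $\varphi_1$ (acting on the $X_1$-factor) and $\varphi_k$ (acting on the $\mathrm{Spec}\,\overline{\mathbb{F}}_p$-factor). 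Since $|I|=2$, the stack $X/\Phi$ in the proposition of the previous section is by definition the quotient by a single partial Frobenius, namely $X/\varphi_k = X_{1,k}/\varphi_k$, which is precisely the stack appearing in the statement of the corollary.

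Next I would invoke the proposition just above to identify the category of unit-root convergent $F$-isocrystals over $X_{1,k}/\varphi_k$ with the category of $p$-adic representations of
\begin{align}
\prod_{i \in I} \pi_1^\mathrm{prof}(X_i,\overline{x}_i) = \pi_1^\mathrm{prof}(X_1,\overline{x}_1) \times \pi_1^\mathrm{prof}(\mathrm{Spec}\,\overline{\mathbb{F}}_p,\overline{x}_2).
\end{align}
The key collapsing step is that $\pi_1^\mathrm{prof}(\mathrm{Spec}\,\overline{\mathbb{F}}_p) = 1$ because $\overline{\mathbb{F}}_p$ is separably closed, so the product on the right reduces to $\pi_1^\mathrm{prof}(X_1,\overline{x}_1)$ alone.

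Finally, I would apply the Katz--Crew equivalence on $X_1$ itself (as in \cite[Theorem 2.1]{Cr2}), which identifies $p$-adic representations of $\pi_1^\mathrm{prof}(X_1,\overline{x}_1)$ with unit-root convergent $F$-isocrystals over $X_1$. Composing the two equivalences yields the desired equivalence between unit-root convergent $F$-isocrystals over $X_{1,k}/\varphi_k$ and unit-root convergent $F$-isocrystals over $X_1$.

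The main point to verify carefully, and the only conceptually nontrivial piece, is the triviality of $\pi_1^\mathrm{prof}(\mathrm{Spec}\,\overline{\mathbb{F}}_p)$ together with the compatibility of the functors: one must check that the functor produced by the proposition truly factors through the natural pullback along the base change and the descent along $\varphi_k$, so that composing with Katz--Crew for $X_1$ gives the manifestly geometric statement and not merely an abstract equivalence of abelian categories. Once the compatibility of Frobenius structures is matched, the rest is a direct consequence of what has already been proved.
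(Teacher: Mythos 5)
Your argument is correct and is essentially the paper's intended one: \cref{corollary2.4} is stated as an immediate consequence of the preceding proposition, specializing to $I=\{1,2\}$ with $X_2=\mathrm{Spec}\,\overline{\mathbb{F}}_p$, using that $\pi_1^{\mathrm{prof}}(\mathrm{Spec}\,\overline{\mathbb{F}}_p)$ is trivial and then applying the Katz--Crew equivalence over $X_1$ to return to unit-root convergent $F$-isocrystals. Your added remark about checking compatibility of the functors (pullback along $X_{1,k}\to X_1$ and Frobenius descent data) is a reasonable refinement but does not change the route.
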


\indent Beyond the corresponding unit-root situation, we should consider the corresponding non-\'etale context where we need to replace the corresponding profinite \'etale fundamental groups by the corresponding isocrytal Tannakian fundamental groups $\pi^\mathrm{Isoc}_1$ (see \cite[Appendix B]{DK}). Therefore the fundamental conjecture will be the following:

\begin{conjecture}
The category of the corresponding $\overline{\mathbb{Q}}_p$-representations of $\prod_{i=1}^I \pi^\mathrm{Isoc}_1(X_i)$ is equivalent to the corresponding category of all the $\overline{\mathbb{Q}}_p$-representations of $\pi^\mathrm{Isoc}_1(X/\Phi)$, which is further equivalent to the corresponding category of all the $\overline{\mathbb{Q}}_p$-convergent isocrystals over $X/\Phi$.
\end{conjecture}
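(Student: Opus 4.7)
The plan is to prove the three-way equivalence by bootstrapping from the Tannakian formalism. The second equivalence, namely between $\overline{\mathbb{Q}}_p$-representations of $\pi_1^{\mathrm{Isoc}}(X/\Phi)$ and the $\overline{\mathbb{Q}}_p$-convergent isocrystals over $X/\Phi$, should be essentially definitional, provided $\mathbf{FIsoc}(X/\Phi)$ is checked to be neutral Tannakian. Therefore the first step is to verify that the category of partial-Frobenius-equivariant convergent $F$-isocrystals on $X$, which is equivalent to $\mathbf{FIsoc}(X/\Phi)$, is neutral Tannakian over $\overline{\mathbb{Q}}_p$ with respect to the fiber functor at a geometric point of $X$. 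This follows from the usual Tannakian structure on $\mathbf{FIsoc}(X)$ (see Crew and \cite[Appendix B]{DK}) after checking that tensor product, duals and internal $\mathrm{Hom}$ all carry canonical partial Frobenius structures, so that the forgetful functor to $\mathbf{FIsoc}(X)$ is a tensor functor.

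The remaining, genuinely harder, equivalence is the one between $\overline{\mathbb{Q}}_p$-representations of $\prod_{i}\pi^{\mathrm{Isoc}}_1(X_i)$ and $\mathbf{FIsoc}(X/\Phi)$. The natural candidate tensor functor sends a tuple $(M_i)_{i\in I}$, each $M_i\in\mathbf{FIsoc}(X_i)$, to the external tensor product $\boxtimes_{i\in I}\mathrm{pr}_i^*M_i$ over the product $X$, equipped with the partial Frobenius structure induced by Frobenius on each factor. The main work is to show that this functor is a tensor equivalence, which by the Tannakian dictionary would then give the desired isomorphism $\pi_1^{\mathrm{Isoc}}(X/\Phi)\cong \prod_i\pi_1^{\mathrm{Isoc}}(X_i)$ and hence the first equivalence of the conjecture.

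Fullness and faithfulness would be handled by reducing to the unit-root case via the Dieudonn\'e--Manin slope decomposition: any morphism in $\mathbf{FIsoc}(X/\Phi)$ respects slopes, and on each isoclinic piece, a twist by a standard slope-$s$ object reduces the situation to slope zero, where the problem is an extension of the classical Drinfeld's lemma recorded in the previous section, and where \Cref{corollary2.4} already handles the two-factor case with one factor a geometric point. Essential surjectivity is where I expect to invest most of the effort: given $M\in\mathbf{FIsoc}(X/\Phi)$, one must show that $M$ decomposes as such an external tensor product. The plan is to first apply the Katz slope filtration to reduce to the isoclinic case, then use a partial-Frobenius descent argument in the spirit of Kedlaya--Xu to split the unit-root piece across the factors $X_i$, and finally twist back to recover $M$.

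The hard part will be making the descent work uniformly in positive slope. In the unit-root regime, one transfers the problem to the profinite \'etale world via Katz--Crew and quotes classical Drinfeld verbatim; in the pure isoclinic regime of slope $s=r/h$, the partial Frobenius $\varphi_i$ acts through a twist controlled by $s$, and one must verify that the external tensor product decomposition survives this twist. The obstruction should live in a partial-Frobenius $H^1$ of a suitable rigid cohomology group, which I expect to kill using Kedlaya's finiteness and Frobenius-structure theorems for overconvergent isocrystals, possibly after restricting first to the overconvergent subcategory $\mathbf{FIsoc}^\dagger$ and then extending back by the Kedlaya full-faithfulness theorem. Once isoclinic descent is established, general essential surjectivity follows by gluing along the slope filtration, which splits in the isocrystal setting after passing to a suitable cover.
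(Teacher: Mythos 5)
The statement you are proving is left as an open conjecture in the paper: the text records the unit-root case (via Katz--Crew plus the classical Drinfeld lemma) and then explicitly formulates the general Tannakian statement as a conjecture, so there is no proof of the paper's to match, and your proposal would have to close a genuinely open problem. It does not, and the gaps are concrete. First, your essential-surjectivity target is the wrong statement: you set out to show that every object of $\mathbf{FIsoc}(X/\Phi)$ decomposes as an external tensor product $\boxtimes_i \mathrm{pr}_i^*M_i$, but the conjecture does not assert this and it is false already in the \'etale model case --- a representation of $\prod_i\pi_1(X_i)$ is in general only a subquotient of sums of external products, not an external product. What must actually be proven is that the natural homomorphism $\pi_1^{\mathrm{Isoc}}(X/\Phi)\to\prod_i\pi_1^{\mathrm{Isoc}}(X_i)$ (or its inverse candidate) is an isomorphism of affine group schemes, which requires the Deligne--Milne style criteria on tensor functors (fully faithful with subobject-closed essential image for surjectivity of the group map, every object a subquotient of an object from the source for injectivity), and none of these verifications appear in your plan.

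Second, the slope machinery you invoke does not exist in the generality you need. Dieudonn\'e--Manin decompositions and splittings of the slope filtration hold over a point (or after passing to a complete algebraically closed base), not over a general smooth $X$: a convergent $F$-isocrystal over $X$ has only a Newton stratification, the slope filtration exists generically or after alteration and does not split, and your closing claim that it "splits after passing to a suitable cover" is not true in the isocrystal setting. Likewise, twisting an isoclinic object of slope $r/h$ down to slope zero requires a rank-$h$ standard object and changes the partial Frobenius structures in a way you never control, and the asserted obstruction "living in a partial-Frobenius $H^1$ of a suitable rigid cohomology group" that you propose to kill by Kedlaya's finiteness theorems is not attached to any actual construction --- finiteness of rigid cohomology does not make such classes vanish, and Kedlaya's full faithfulness theorem for $\mathbf{FIsoc}^\dagger\to\mathbf{FIsoc}$ transfers morphisms, not essential surjectivity or splittings. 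What your outline genuinely covers is the unit-root case, which is exactly the part the paper already establishes (its Proposition via Katz--Crew and classical Drinfeld); everything beyond unit-root, which is the actual content of the conjecture (and the subject of Kedlaya's and Kedlaya--Xu's ongoing work cited in the paper), remains unproved in your proposal.
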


\newpage
\section{Overconvergent Isocrystals}

\indent Now we switch to just the $p$-adic setting in the context of overconvergent isocrystals.

\begin{setting}
Recall from \cite[Definition 2.7]{Ked1}, we have the corresponding category of all the overconvergent $F$-isocrystals over some scheme $S$ over $\mathbb{F}_p$. We use the notation $\textbf{FIsoc}^\dagger(S)$ to denote the category of all the overconvergent $F$-isocrystals over $S$ (the exact definition could be found in \cite[Definition 2.7]{Ked1}).  
\end{setting}

\begin{definition}
Over the stack $X/\Phi$ we define a corresponding overconvergent $F$-isocrystal $M$ to be an overconvergent $F$-isocrystal $(M,\nabla,\sigma)$ over $X$ with the corresponding action from each partial Frobenius $\varphi_i$ ($i\in I$) compatible with the corresponding partial Frobenius actions on the space $X$. We can consider the corresponding category of all such objects, which will be denoted by $\textbf{FIsoc}^\dagger(X/\Phi)$. 	
\end{definition}

\begin{proposition}
With the corresponding notations established above. Consider the following two categories. The first is the category of all the $p$-adic \'etale local systems over $X/\Phi$ which are potentially unramified with respect to $X$. The second is the category of all the overconvergent $F$-isocrystals over $X/\Phi$ which are unit-root for $\sigma$ when regarded as the corresponding objects over $X$ carrying partial Frobenius actions. Then we have that all the two categories are equivalence. The corresponding functors are the one induced from the usual Drinfeld's lemma on the isomorphism of the groups on the both sides, and the one coming from Crew-Tsuzuki-Kedlaya's equivalence.	
\end{proposition}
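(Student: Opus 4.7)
The plan is to reduce the claimed equivalence over the stack $X/\Phi$ to the Crew--Tsuzuki--Kedlaya equivalence over the underlying scheme $X$, and then to check that this base equivalence respects the extra partial Frobenius data distinguishing objects over $X/\Phi$ from objects over $X$. The structure parallels the proof of the preceding proposition in the convergent setting, with the new input being Tsuzuki's and Kedlaya's extension of the Katz--Crew theorem to the overconvergent framework: over a smooth $\mathbb{F}_p$-scheme $X$, the natural functor sending an overconvergent $F$-isocrystal to its underlying $p$-adic representation of the profinite \'etale fundamental group restricts to an equivalence between unit-root overconvergent $F$-isocrystals and those $p$-adic \'etale local systems which are potentially unramified along the boundary of a smooth compactification of $X$.

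First I would fix notation for this base equivalence, writing $\Psi_X : \textbf{FIsoc}^\dagger(X)_{\mathrm{unit}} \overset{\sim}{\longrightarrow} \mathrm{Loc}^{\mathrm{p.u.}}(X)$ for the Crew--Tsuzuki--Kedlaya functor. The second step is to verify that $\Psi_X$ is functorial with respect to pullback along each partial Frobenius $\varphi_i : X \to X$. Since $\varphi_i$ is a universal homeomorphism on the \'etale site and extends to a Frobenius-type endomorphism of any chosen lift, both sides transport naturally, and the compatibility follows essentially from how $\Psi_X$ is built: both $\textbf{FIsoc}^\dagger$ and $\mathrm{Loc}^{\mathrm{p.u.}}$ are contravariantly functorial in a way respected by $\Psi_X$.

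Given this, a $\Phi$-equivariant object on the isocrystal side, that is, a unit-root overconvergent isocrystal $M$ equipped with isomorphisms $\varphi_i^\ast M \overset{\sim}{\longrightarrow} M$ whose composite recovers the ambient $\sigma$-structure, transports under $\Psi_X$ to a local system $L$ on $X$ equipped with compatible isomorphisms $\varphi_i^\ast L \overset{\sim}{\longrightarrow} L$, and conversely. By the formalism of the Drinfeld stack recalled in the first section of this chapter, which identifies $\text{F\'ET}(X/\Phi)$ with the $\Phi$-equivariant category $\text{F\'ET}(X)_\Phi$ and extends to pro-objects and continuous $p$-adic representations, this is exactly the data of a potentially unramified $p$-adic local system on the stack $X/\Phi$. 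The unit-root hypothesis on the isocrystal side corresponds precisely to working with honest $\mathbb{Z}_p$-lattice local systems, which together with overconvergence along the boundary yields the potentially unramified condition.

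The hard part will be verifying that $\Psi_X$ really does transport the full $\Phi$-equivariance, meaning not only the individual $\varphi_i^\ast$-isomorphisms but also the cocycle conditions among them together with the relation $\varphi_1 \circ \cdots \circ \varphi_I = \sigma$, and that the ``potentially unramified'' condition is preserved under each partial Frobenius pullback in the strong sense required for equivariant descent from $X$ to $X/\Phi$. This amounts to a careful chase through the compactification and alteration arguments underlying \cite{Ked1} and \cite{Cr2}, and is exactly the point where the overconvergent setting is more delicate than the convergent one treated in the previous proposition; once it is in place, the rest of the argument is a straightforward assembly of the equivalences in parallel with the preceding proof.
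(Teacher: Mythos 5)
Your proposal is correct and follows essentially the same route as the paper: the paper's proof is simply the combination of the usual Drinfeld's lemma from the first section with the Crew--Tsuzuki--Kedlaya equivalence between unit-root overconvergent $F$-isocrystals and potentially unramified $p$-adic local systems (cf.\ \cite[Theorem 1.3]{Tsu}), applied to $\Phi$-equivariant objects. The extra detail you supply---checking that the base equivalence is compatible with pullback along the partial Frobenii so that equivariant structures transport---is exactly the implicit content of the paper's one-line argument, not a different strategy.
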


\begin{proof}
This would be just a combination of the usual Drinfeld's lemma as in the previous section, and the corresponding Crew-Tsuzuki-Kedlaya's equivalence \cite{Cr1}, \cite{Cr2}, \cite{Tsu}, \cite{Ked4}, \cite{Ked5}, \cite{Ked6}, \cite{Ked7} on the unit-root convergent isocrystals and the corresponding $p$-adic \'etale local systems (for instance see \cite[Theorem 1.3]{Tsu}).	
\end{proof}

\begin{corollary}  \label{corollary3.4}
Let $I$ be a set of cardinality 2. And we put $X_2$ to be $\mathrm{Spec} k=\mathrm{Spec}\overline{\mathbb{F}}_p$. Then we have the category of the corresponding unit root overconvergent $F$-isocrystals over $X_{1,k}/\varphi_k$ and the category of the corresponding unit root overconvergent $F$-isocrystals over $X_1$ are equivalent.	
\end{corollary}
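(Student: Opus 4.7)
The plan is to reduce the statement to the two equivalences already in hand: the Drinfeld-type lemma for overconvergent $F$-isocrystals just established, combined with the Crew-Tsuzuki-Kedlaya equivalence applied directly on $X_1$. First, by the preceding proposition applied to the two-factor product $X_{1,k} = X_1 \times_{\mathbb{F}_p} \mathrm{Spec}\, k$, the category of unit-root overconvergent $F$-isocrystals on the quotient stack $X_{1,k}/\varphi_k$ is equivalent to the category of $p$-adic \'etale local systems on the same stack which become potentially unramified after pullback to $X_{1,k}$.

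Next I would invoke the classical Drinfeld's lemma (Proposition 1.3) to rewrite those \'etale local systems as continuous $p$-adic representations of $\pi_1^{\mathrm{prof}}(X_1, \overline{x}_1) \times \pi_1^{\mathrm{prof}}(\mathrm{Spec}\, k, \overline{x}_2)$. Since $k = \overline{\mathbb{F}}_p$ is separably closed the second factor is trivial, so we are left with continuous $p$-adic representations of $\pi_1^{\mathrm{prof}}(X_1, \overline{x}_1)$ alone, i.e.\ with $p$-adic \'etale local systems on $X_1$ itself; the potential-unramifiedness condition transports across this chain because it depends only on the behaviour over a finite \'etale cover, and such covers of $X_1$ match those of $X_{1,k}/\varphi_k$ under Drinfeld.

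Finally I would apply the Crew-Tsuzuki-Kedlaya theorem, now on the scheme $X_1$ in the reverse direction, to convert the category of potentially unramified $p$-adic \'etale local systems on $X_1$ back into the category of unit-root overconvergent $F$-isocrystals on $X_1$. Composing the three equivalences yields the desired identification. The main point I would expect to verify carefully is the compatibility of the partial-Frobenius equivariance datum with the unit-root / potentially unramified conditions: concretely one must check that an overconvergent $F$-isocrystal on $X_{1,k}$ equipped with a compatible $\varphi_k$-structure is the same datum as one descended to $X_1$, and symmetrically on the \'etale side. This amounts to a Galois descent computation once one remembers that the $\varphi_k$-quotient corresponds to taking $\mathrm{Gal}(\overline{\mathbb{F}}_p/\mathbb{F}_p)$-equivariant data, and that on both sides of Crew-Tsuzuki-Kedlaya this Galois action is implemented by the same Frobenius pullback.
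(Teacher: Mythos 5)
Your argument is correct and is essentially the one the paper intends: the corollary is deduced from the preceding proposition (Drinfeld's lemma combined with the Crew--Tsuzuki--Kedlaya equivalence) by specializing to the two-factor case with $X_2=\mathrm{Spec}\,\overline{\mathbb{F}}_p$, whose profinite fundamental group is trivial, and then applying Crew--Tsuzuki--Kedlaya in the reverse direction on $X_1$. Your added remark about checking that the $\varphi_k$-equivariance datum matches Galois descent of the unit-root/potentially unramified conditions is a reasonable point of care but does not change the route.
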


\indent Beyond the corresponding unit-root situation, we should consider the corresponding non-\'etale context where we need to replace the corresponding profinite \'etale fundamental groups by the corresponding isocrytal Tannakian fundamental groups $\pi^{\mathrm{Isoc},\dagger}_1$ (see \cite[Appendix B]{DK}). Therefore the fundamental conjecture will be the following:

\begin{conjecture}
The category of the corresponding $\overline{\mathbb{Q}}_p$-representations of $\prod_{i=1}^I \pi^{\mathrm{Isoc},\dagger}_1(X_i)$ is equivalent to the corresponding category of all the $\overline{\mathbb{Q}}_p$-representations of $\pi^{\mathrm{Isoc},\dagger}_1(X/\Phi)$, which is further equivalent to the corresponding category of all the $\overline{\mathbb{Q}}_p$-overconvergent isocrystals over $X/\Phi$.
\end{conjecture}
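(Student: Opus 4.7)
The plan is to establish the two equivalences in sequence. The second equivalence---between $\overline{\mathbb{Q}}_p$-representations of $\pi^{\mathrm{Isoc},\dagger}_1(X/\Phi)$ and objects of $\textbf{FIsoc}^\dagger(X/\Phi)$---is essentially tautological once one verifies that $\textbf{FIsoc}^\dagger(X/\Phi)$ is a neutral Tannakian category over $\overline{\mathbb{Q}}_p$, since it is then simply the definition of the Tannakian fundamental group attached to a choice of fiber functor at a geometric point $\overline{x}$. The substantive content of the conjecture is therefore the first equivalence.

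For that first equivalence, I would construct the external tensor product functor
\[
\boxtimes\colon \prod_{i=1}^I \textbf{FIsoc}^\dagger(X_i) \longrightarrow \textbf{FIsoc}^\dagger(X/\Phi),
\qquad (M_1,\ldots,M_I) \longmapsto \bigotimes_{i=1}^I \mathrm{pr}_i^*M_i,
\]
observing that each $\mathrm{pr}_i^*M_i$ carries a tautological $\varphi_j$-equivariant structure for $j\ne i$ (coming from the identity on the $j$-th factor), while its intrinsic Frobenius structure on the $i$-th factor supplies the $\varphi_i$-equivariance; the tensor product therefore inherits the compatible multi-Frobenius datum required by the definition of $\textbf{FIsoc}^\dagger(X/\Phi)$. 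By Tannakian duality, showing that $\boxtimes$ is an equivalence amounts to showing that the induced homomorphism
\[
\prod_{i=1}^I \pi^{\mathrm{Isoc},\dagger}_1(X_i,\overline{x}_i) \longrightarrow \pi^{\mathrm{Isoc},\dagger}_1(X/\Phi,\overline{x})
\]
is an isomorphism of pro-algebraic groups over $\overline{\mathbb{Q}}_p$.

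The key step is a reduction to the unit-root case already handled in the text. Here I would invoke Kedlaya's slope filtration theorem for overconvergent $F$-isocrystals to present an arbitrary object $M$ of $\textbf{FIsoc}^\dagger(X/\Phi)$, after pulling back along a suitable generically étale alteration, as a successive extension of isoclinic pieces, each of which becomes unit-root after twisting by an appropriate rank-one overconvergent $F$-isocrystal. The compatibility of this filtration with each partial Frobenius $\varphi_i$ should allow one to descend the filtration from $X$ to $X/\Phi$, whereupon the unit-root proposition of the preceding section together with \Cref{corollary3.4} and the classical Drinfeld's lemma identifies each graded piece with an object coming from the product $\prod_i \pi^{\mathrm{Isoc},\dagger}_1(X_i)$.

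The hard part will be verifying that Kedlaya's slope filtration interacts correctly with the multi-Frobenius structure: one must show that every partial Frobenius $\varphi_i$ preserves the slope filtration, and that the resulting isoclinic pieces are themselves bona fide objects of $\textbf{FIsoc}^\dagger(X/\Phi)$ rather than merely of $\textbf{FIsoc}^\dagger(X)$ with auxiliary structures. A secondary obstacle is the extension problem between isoclinic graded pieces: in the unit-root setting this is controlled by a Galois-cohomological vanishing that mirrors the classical Drinfeld descent, but in the general overconvergent setting it appears to require a more delicate analysis of $\mathrm{Ext}^1$ in $\textbf{FIsoc}^\dagger(X/\Phi)$, plausibly via the machinery of $(\varphi,\nabla)$-modules over a Robba-type ring tailored to the multi-factor product, together with the partial Frobenius descent spelled out for $F$-isocrystals in \cite{Ked1} and \cite{KX}.
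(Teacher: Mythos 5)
First, a point of orientation: the paper does not prove this statement at all --- it is deliberately recorded as the ``fundamental conjecture'' of the overconvergent section (mirroring Kedlaya's open question in \cite{Ked9}), and the only proved inputs nearby are the unit-root statement via Tsuzuki--Crew--Kedlaya and the slope-decomposition results quoted later from \cite[Theorem 7.3, Corollary 7.4]{Ked9}. So there is no argument of the paper to compare yours against, and your proposal has to be judged as an attempted proof of an open statement.

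Judged that way, the central step of your plan --- dévissage of a general object of $\textbf{FIsoc}^\dagger(X/\Phi)$ to the unit-root case via the slope filtration --- is exactly where the argument breaks, and for a structural reason rather than a technical one. The slope filtration of an overconvergent $F$-isocrystal is in general only a filtration by \emph{convergent} sub-isocrystals: already for the relative $H^1$ of a family of ordinary elliptic curves the unit-root subobject (Dwork) is convergent but not overconvergent. So the isoclinic graded pieces you want to feed into the unit-root proposition and \cref{corollary3.4} are not objects of $\textbf{FIsoc}^\dagger(X)$, let alone of $\textbf{FIsoc}^\dagger(X/\Phi)$; the filtration exits the category whose Tannakian group you are trying to compute. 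Moreover the filtration exists only generically (after shrinking or a generically étale alteration, by Grothendieck specialization of Newton polygons plus semistable reduction), and descending it back while keeping track of the partial Frobenii is not addressed. Most fundamentally, $\pi_1^{\mathrm{Isoc},\dagger}$ is not an extension of data controlled by unit-root objects: irreducible overconvergent $F$-isocrystals of Dwork/Kloosterman/hypergeometric type are not successive extensions of rank-one twists of unit-root objects, which is precisely why $\pi_1^{\mathrm{Isoc},\dagger}$ differs from $\pi_1^{\mathrm{prof}}$ and why the statement is a conjecture rather than a corollary of the unit-root case. Kedlaya's partial-Frobenius results give a slope decomposition with descent of the individual $M_{a/b}^{\varphi_k^b-p^a}$, not a reduction of the whole Tannakian category to its étale part, so they cannot substitute for the missing dévissage. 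A smaller but real wrinkle: your functor $\boxtimes$ out of the product category $\prod_i\textbf{FIsoc}^\dagger(X_i)$ cannot itself be an equivalence (representations of a product group are not external products in general); the correct formulation is the one you pivot to, namely that the induced map of Tannakian group schemes is an isomorphism, and fullness/faithfulness plus surjectivity of that map on the level of groups is precisely the open content, not something Tannakian duality hands you once $\boxtimes$ is written down.
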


\begin{remark}
The corresponding functoriality realizing such equivalence should be not that transparent. The reason here why one has to be sufficiently careful is that the corresponding six functors do not serve as a well-defined way to functorializing the corresponding construction and operations of the corresponding isocrystals. Instead at least for some better consideration one should look at the corresponding relative $p$-adic differential equation theory established in the scope of the	arithmetic $D^\dagger$-modules and so on.
\end{remark}

\newpage

\section{Rephrasization by Arithmetic $D^\dagger$-Modules over Stacky Disks}

\subsection{Double Points}

\indent We now rephrase some of the corresponding local pictures considered above in the framework of arithmetic $D^\dagger$-modules. The corresponding picture is obviously more transparent than the global situation. Actually we will start from the curve situation and consider then further the localization.\\

\indent The geometric framework in the local setting comes from the two pointed formal space $\mathrm{Spf}(\mathbb{Z}_p[[t]])$ and $\mathrm{Spf}W(\overline{\mathbb{F}}_p)[[t]]$. We use $s$ to denote the single closed point and we use the corresponding notation $\eta$ to denote the single open point.

\begin{setting}
\indent We now consider the corresponding setting up around the local picture in \cite{AM}. We consider the corresponding space $X_1=\mathrm{Spec}\mathbb{F}_p((t))$. Now we recall the corresponding picture of the relative $p$-adic differential equations in \cite{AM}. Recall from \cite[Section 2.1.2,2.1.3]{AM} in this case, we have the following sheaves of the corresponding differential operators with the corresponding along $t=0$ overconvergence. We use the corresponding notation
\begin{align}
D_{\mathrm{an},\mathrm{con},\mathbb{Z}_p[[t]],t=0}	
\end{align}
to denote the corresponding sheaf of analytic differential operaters along $t=0$ having overconvergence. And we use the corresponding notation:
\begin{align}
D_{\mathrm{con},\mathbb{Z}_p[[t]],t=0}	
\end{align}
to denote the corresponding sheaf of overconvergent differential operaters along $t=0$ having overconvergence. Also we have the following truncated constructions which are also very important applications. We use the corresponding notation
\begin{align}
D^{(m)}_{\mathrm{an},\mathrm{con},\mathbb{Z}_p[[t]],t=0}	
\end{align}
to denote the corresponding sheaf of analytic differential operaters along $t=0$ having overconvergence, of order $m\geq 0$. And we use the corresponding notation:
\begin{align}
D^{(m)}_{\mathrm{con},\mathbb{Z}_p[[t]],t=0}	
\end{align}
to denote the corresponding sheaf of overconvergent differential operaters along $t=0$ having overconvergence, of order $m\geq 0$. And what we will do as well is the corresponding sheaves over two points, where we will drop the corresponding notation of $t=0$.
\end{setting}

\indent And we have the following:

\begin{setting}
\indent We now consider the corresponding setting up around the local picture in \cite{AM}. We consider the corresponding space $X_1=\mathrm{Spec}\overline{\mathbb{F}}_p((t))$. Now we recall the corresponding picture of the relative $p$-adic differential equations in \cite{AM}. Recall from \cite[Section 2.1.2,2.1.3]{AM} in this case, we have the following sheaves of the corresponding differential operators with the corresponding along $t=0$ overconvergence. We use the corresponding notation
\begin{align}
D_{\mathrm{an},\mathrm{con},W(\overline{\mathbb{F}}_p)[[t]],t=0}	
\end{align}
to denote the corresponding sheaf of analytic differential operaters along $t=0$ having overconvergence. And we use the corresponding notation:
\begin{align}
D_{\mathrm{con},W(\overline{\mathbb{F}}_p)[[t]],t=0}	
\end{align}
to denote the corresponding sheaf of overconvergent differential operaters along $t=0$ having overconvergence. Also we have the following truncated constructions which are also very important applications. We use the corresponding notation
\begin{align}
D^{(m)}_{\mathrm{an},\mathrm{con},W(\overline{\mathbb{F}}_p)[[t]],t=0}	
\end{align}
to denote the corresponding sheaf of analytic differential operaters along $t=0$ having overconvergence, of order $m\geq 0$. And we use the corresponding notation:
\begin{align}
D^{(m)}_{\mathrm{con},W(\overline{\mathbb{F}}_p)[[t]],t=0}	
\end{align}
to denote the corresponding sheaf of overconvergent differential operaters along $t=0$ having overconvergence, of order $m\geq 0$. And what we will do as well is the corresponding sheaves over two points, where we will drop the corresponding notation of $t=0$.
\end{setting}

\indent Over the corresponding sheaves of rings as above we will consider those arithmetic coherent sheaves in \cite[Section 2.1.2,2.1.3,2.1.4]{AM}. And we consider the corresponding holonomic ones. Also we consider the corresponding Frobenius structures as well.

\begin{setting}
\indent We now consider the corresponding setting up around the local picture in \cite{AM}. We consider the corresponding space $X_1=\mathrm{Spec}\mathbb{F}_p((t))$. Now we recall the corresponding picture of the relative $p$-adic differential equations in \cite{AM}. Recall from \cite[Section 2.1.2,2.1.3,2.1.4]{AM} in this case, we have the following rings of the corresponding differential operators with the corresponding along $t=0$ overconvergence. We use the corresponding notation
\begin{align}
d_{\mathrm{an},\mathrm{con},\mathbb{Z}_p[[t]],t=0}	
\end{align}
to denote the corresponding ring of analytic differential operaters along $t=0$ having overconvergence. And we use the corresponding notation:
\begin{align}
d_{\mathrm{con},\mathbb{Z}_p[[t]],t=0}	
\end{align}
to denote the corresponding ring of overconvergent differential operaters along $t=0$ having overconvergence. Also we have the following truncated constructions which are also very important applications. We use the corresponding notation
\begin{align}
d^{(m)}_{\mathrm{an},\mathrm{con},\mathbb{Z}_p[[t]],t=0}	
\end{align}
to denote the corresponding ring of analytic differential operaters along $t=0$ having overconvergence, of order $m\geq 0$. And we use the corresponding notation:
\begin{align}
d^{(m)}_{\mathrm{con},\mathbb{Z}_p[[t]],t=0}	
\end{align}
to denote the corresponding ring of overconvergent differential operaters along $t=0$ having overconvergence, of order $m\geq 0$. And what we will do as well is the corresponding rings over two points, where we will drop the corresponding notation of $t=0$.
\end{setting}

\indent And we have the following:

\begin{setting}
\indent We now consider the corresponding setting up around the local picture in \cite{AM}. We consider the corresponding space $X_1=\mathrm{Spec}\overline{\mathbb{F}}_p((t))$. Now we recall the corresponding picture of the relative $p$-adic differential equations in \cite{AM}. Recall from \cite[Section 2.1.2,2.1.3,2.1.4]{AM} in this case, we have the following rings of the corresponding differential operators with the corresponding along $t=0$ overconvergence. We use the corresponding notation
\begin{align}
d_{\mathrm{an},\mathrm{con},W(\overline{\mathbb{F}}_p)[[t]],t=0}	
\end{align}
to denote the corresponding ring of analytic differential operaters along $t=0$ having overconvergence. And we use the corresponding notation:
\begin{align}
d_{\mathrm{con},W(\overline{\mathbb{F}}_p)[[t]],t=0}	
\end{align}
to denote the corresponding ring of overconvergent differential operaters along $t=0$ having overconvergence. Also we have the following truncated constructions which are also very important applications. We use the corresponding notation
\begin{align}
d^{(m)}_{\mathrm{an},\mathrm{con},W(\overline{\mathbb{F}}_p)[[t]],t=0}	
\end{align}
to denote the corresponding ring of analytic differential operaters along $t=0$ having overconvergence, of order $m\geq 0$. And we use the corresponding notation:
\begin{align}
d^{(m)}_{\mathrm{con},W(\overline{\mathbb{F}}_p)[[t]],t=0}	
\end{align}
to denote the corresponding ring of overconvergent differential operaters along $t=0$ having overconvergence, of order $m\geq 0$. And what we will do as well is the corresponding rings over two points, where we will drop the corresponding notation of $t=0$.
\end{setting}

\indent Over the corresponding rings as above we will consider those arithmetic coherent modules in \cite[Section 2.1.2,2.1.3,2.1.4]{AM}. And we consider the corresponding holonomic ones. Also we consider the corresponding Frobenius structures as well.

\begin{setting}
We now consider the following sheaves:
\begin{align}
\Pi_{\mathrm{an},\mathrm{con},\mathbb{Z}_p[[t]],t=0}:=D^{(0)}_{\mathrm{an},\mathrm{con},\mathbb{Z}_p[[t]],t=0}	
\end{align}
and 
\begin{align}
\Pi_{\mathrm{con},\mathbb{Z}_p[[t]],t=0}:=D^{(0)}_{\mathrm{con},\mathbb{Z}_p[[t]],t=0}.	
\end{align}	
Taking the global section we have:
\begin{align}
\pi_{\mathrm{an},\mathrm{con},\mathbb{Z}_p[[t]],t=0}:=d^{(0)}_{\mathrm{an},\mathrm{con},\mathbb{Z}_p[[t]],t=0}	
\end{align}
and 
\begin{align}
\pi_{\mathrm{con},\mathbb{Z}_p[[t]],t=0}:=d^{(0)}_{\mathrm{con},\mathbb{Z}_p[[t]],t=0}.	
\end{align}	
These are the corresponding Robba rings $\mathcal{R}$ and the corresponding bounded ones $\mathcal{E}^\dagger$. And what we will do as well is the corresponding rings over two points, where we will drop the corresponding notation of $t=0$.
\end{setting}

\begin{setting}
We now consider the following sheaves:
\begin{align}
\Pi_{\mathrm{an},\mathrm{con},W(\overline{\mathbb{F}}_p)[[t]],t=0}:=D^{(0)}_{\mathrm{an},\mathrm{con},W(\overline{\mathbb{F}}_p)[[t]],t=0}	
\end{align}
and 
\begin{align}
\Pi_{\mathrm{con},W(\overline{\mathbb{F}}_p)[[t]],t=0}:=D^{(0)}_{\mathrm{con},W(\overline{\mathbb{F}}_p)[[t]],t=0}.	
\end{align}	
Taking the global section we have:
\begin{align}
\pi_{\mathrm{an},\mathrm{con},W(\overline{\mathbb{F}}_p)[[t]],t=0}:=d^{(0)}_{\mathrm{an},\mathrm{con},W(\overline{\mathbb{F}}_p)[[t]],t=0}	
\end{align}
and 
\begin{align}
\pi_{\mathrm{con},W(\overline{\mathbb{F}}_p)[[t]],t=0}:=d^{(0)}_{\mathrm{con},W(\overline{\mathbb{F}}_p)[[t]],t=0}.	
\end{align}	
These are the corresponding Robba rings $\mathcal{R}$ and the corresponding bounded ones $\mathcal{E}^\dagger$. And what we will do as well is the corresponding rings over two points, where we will drop the corresponding notation of $t=0$.
\end{setting}

\indent Over these sheaves and rings we have the corresponding notion of finite free $(\varphi,\nabla)$-modules which are for instance studied very extensively in \cite{Ked8}.

\begin{proposition} \label{proposition4.7}
The corresponding category of all the finite free $(\varphi,\nabla)$-modules over $\pi_{\mathrm{an},\mathrm{con},{\mathbb{Z}}_p[[t]],t=0}$ is equivalent to the category of all the arithmetic Frobenius modules over $d_{\mathrm{an},\mathrm{con},{\mathbb{Z}}_p[[t]],t=0}$, which are assumed to be holonomic.
\end{proposition}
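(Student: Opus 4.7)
The plan is to construct mutually inverse functors between the two categories. First, given a finite free $(\varphi,\nabla)$-module $(M,\nabla,\varphi)$ over $\pi_{\mathrm{an},\mathrm{con},\mathbb{Z}_p[[t]],t=0}$, I would promote $M$ to a module over the larger ring $d_{\mathrm{an},\mathrm{con},\mathbb{Z}_p[[t]],t=0}$ of arithmetic differential operators by using the connection $\nabla$ to define the action of elements of positive differential order; compatibility of $\nabla$ with the derivation on $\pi$ is exactly what is needed for this to yield a well-defined module structure, and compatibility with $\varphi$ transports to an arithmetic Frobenius structure on the resulting $d$-module. In the reverse direction, any arithmetic Frobenius $d$-module is a fortiori a $\pi$-module, the universal derivation sitting inside $d$ provides the $\nabla$-action, and $\varphi$ restricts to the Frobenius on the underlying $\pi$-module. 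The adjunction between these two constructions is tautological once one exploits that $d$ is generated over $\pi$ by the derivations used to encode $\nabla$.

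The key properties to verify are: (i) the $d$-module produced in the forward direction is holonomic when the starting $(\varphi,\nabla)$-module is finite free over $\pi$, and (ii) the restriction to $\pi$ of a holonomic arithmetic Frobenius $d$-module is again finite free and carries a compatible $(\varphi,\nabla)$-structure. Property (i) would follow from the standard dimension count in the filtered setting of \cite[Section 2.1.2,2.1.3,2.1.4]{AM}: the rank of $M$ over $\pi$ controls the characteristic variety of the extension, so a finite free connection yields a holonomic $D^\dagger$-module of the expected dimension. Property (ii) is the substantive half: one must show that holonomicity together with the Frobenius structure forces the underlying $\pi$-module to be a $(\varphi,\nabla)$-module in the sense of \cite{Ked8}, and then invoke the finite freeness results for such modules over the Robba ring.

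The main obstacle is precisely step (ii): translating the $D^\dagger$-module holonomicity condition of \cite{AM} into finite freeness over the Robba ring $\pi_{\mathrm{an},\mathrm{con},\mathbb{Z}_p[[t]],t=0}$. Here one has to match the $m$-filtration used to define $d^{(m)}$ against the Frobenius slope filtration coming from $\varphi$, and then apply Kedlaya's $p$-adic local monodromy theorem together with a Dieudonn\'e--Manin style classification to conclude that any holonomic arithmetic Frobenius $d$-module is automatically finite free after restriction to $\pi$. Once both directions are in place, checking that the two constructions are quasi-inverse reduces to the observation that no extra data is either added or lost in either direction, and the equivalence is functorial in an obvious way compatible with Frobenius pullback and tensor products, which will be important when upgrading the statement to the stacky and $\infty$-categorical settings later in the paper.
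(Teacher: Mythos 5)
The paper does not give an argument at all: it simply cites \cite[Proposition in Section 2.1.4]{AM}, where this equivalence between holonomic arithmetic Frobenius modules over the analytic differential operator ring and finite free $(\varphi,\nabla)$-modules over the Robba ring is established. Your two-functor skeleton (extend the $\pi$-module structure to $d$ via $\nabla$, restrict back and read off $\nabla$ from the derivation inside $d$) is indeed the shape of the cited result, and your direction (i) is essentially correct and standard.

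The genuine gap is in your step (ii), and the tools you name there would not close it. Kedlaya's $p$-adic local monodromy theorem \cite{Ked3} and Dieudonn\'e--Manin/slope theory take as \emph{input} a module that is already finite free over the Robba ring with Frobenius and connection, and describe its structure (quasi-unipotence after a finite \'etale pullback, slope decomposition); they say nothing about why the restriction to $\pi_{\mathrm{an},\mathrm{con},\mathbb{Z}_p[[t]],t=0}$ of a holonomic Frobenius $d$-module is coherent, let alone finite free. Likewise ``matching the $m$-filtration against the Frobenius slope filtration'' is not a meaningful operation: the level-$m$ filtration on $d^{(m)}$ is a filtration of the operator ring by order of divided-power differential operators, while the slope filtration lives on a Frobenius module and is only defined after freeness is known. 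The actual substance of (ii) is the characteristic-variety/dimension theory for arithmetic $\mathcal{D}$-modules on the disc together with the structure theory of coherent modules over the Robba ring (a B\'ezout domain, so finitely generated torsion-free modules are free), with the Frobenius structure ruling out spurious contributions along $t=0$; this is exactly what \cite[Sections 2.1.2--2.1.4]{AM}, building on Crew and Christol--Mebkhout, supplies. As written, your proposal would need that input imported wholesale, at which point one is back to citing \cite{AM} as the paper does.
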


\begin{proof}
This is well-known, for instance see \cite[Proposition in Section 2.1.4]{AM}.	
\end{proof}

\begin{proposition} \label{proposition4.8}
The corresponding category of all the finite free $(\varphi,\nabla)$-modules over $\pi_{\mathrm{an},\mathrm{con},W(\overline{\mathbb{F}}_p)[[t]],t=0}$ is equivalent to the category of all the arithmetic Frobenius modules over $d_{\mathrm{an},\mathrm{con},W(\overline{\mathbb{F}}_p)[[t]],t=0}$, which are assumed to be holonomic.
\end{proposition}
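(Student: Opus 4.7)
The plan is to mirror the proof of Proposition \ref{proposition4.7}, which was obtained by citing the main equivalence in \cite[Proposition in Section 2.1.4]{AM}, and to upgrade it along the faithfully flat base change $\mathbb{Z}_p[[t]] \to W(\overline{\mathbb{F}}_p)[[t]]$. First I would recall the explicit form of the Abe--Marmora equivalence: on one side one has finite free $(\varphi,\nabla)$-modules over the Robba-type ring $\pi_{\mathrm{an},\mathrm{con}}$ (which in the $\mathbb{Z}_p$ situation is just $\mathcal{R}$), and on the other side one has holonomic arithmetic Frobenius modules over the full operator ring $d_{\mathrm{an},\mathrm{con}}$, with the equivalence realised by $M \mapsto d_{\mathrm{an},\mathrm{con}} \otimes_{\pi_{\mathrm{an},\mathrm{con}}} M$ in one direction and by restriction of scalars (remembering only the order-zero part of the differential action together with the Frobenius) in the other.

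Next I would verify that this construction is compatible with passage from $\mathbb{Z}_p[[t]]$ to $W(\overline{\mathbb{F}}_p)[[t]]$. The Robba ring construction and the truncated operator sheaves $D^{(m)}_{\mathrm{an},\mathrm{con}}$ behave functorially under the étale extension of coefficient rings $W(\mathbb{F}_p) \to W(\overline{\mathbb{F}}_p)$, so the two categories involved in Proposition \ref{proposition4.8} are obtained from those in Proposition \ref{proposition4.7} by base change along a faithfully flat, Frobenius-equivariant map. Both the formation of the tensor product $d_{\mathrm{an},\mathrm{con},W(\overline{\mathbb{F}}_p)[[t]]} \otimes_{\pi} (-)$ and the holonomicity condition are preserved under this base change, since holonomicity is characterised locally in terms of characteristic varieties and filtered dimensions that do not change under an unramified extension of the base $p$-adic ring.

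The main obstacle I anticipate is the verification that \emph{finite freeness} and \emph{holonomicity} descend or ascend correctly after this base change, i.e., that a $(\varphi,\nabla)$-module over $\pi_{\mathrm{an},\mathrm{con},W(\overline{\mathbb{F}}_p)[[t]]}$ which is not necessarily defined over $\pi_{\mathrm{an},\mathrm{con},\mathbb{Z}_p[[t]]}$ still corresponds to a holonomic arithmetic Frobenius module and conversely. I would handle this by invoking the faithful flatness of the relevant extension together with the $\varphi$-compatibility, reducing a module over $W(\overline{\mathbb{F}}_p)$-coefficients to the flat base change of a module over $\mathbb{Z}_p$-coefficients up to unramified twists, and then applying Proposition \ref{proposition4.7}.

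Finally, I would assemble the equivalence in the same functorial form as in \cite[Section 2.1.4]{AM}: the functor from the $(\varphi,\nabla)$-module side to the arithmetic $D^\dagger$-module side is $M \mapsto d_{\mathrm{an},\mathrm{con},W(\overline{\mathbb{F}}_p)[[t]],t=0} \otimes_{\pi_{\mathrm{an},\mathrm{con},W(\overline{\mathbb{F}}_p)[[t]],t=0}} M$, and the quasi-inverse is obtained by remembering only the connection and Frobenius. Fully faithful and essentially surjective follows, in each direction, by the same argument as in Abe--Marmora combined with the base change compatibility checked above. The output is then the desired equivalence of categories, completing the proof.
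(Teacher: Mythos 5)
Your overall target functors are the right ones, but the route you take contains an unjustified step. The paper's own proof is a one-line citation of \cite[Proposition in Section 2.1.4]{AM}, and the point is that Abe--Marmora's equivalence is proved over a general complete discrete valuation ring with perfect residue field of characteristic $p$; it therefore applies verbatim with $W(\overline{\mathbb{F}}_p)[[t]]$ in place of $\mathbb{Z}_p[[t]]$, exactly as in \cref{proposition4.7}. No reduction of the $W(\overline{\mathbb{F}}_p)$ case to the $\mathbb{Z}_p$ case is needed, and your first and last paragraphs (tensoring with $d_{\mathrm{an},\mathrm{con},W(\overline{\mathbb{F}}_p)[[t]],t=0}$ over $\pi_{\mathrm{an},\mathrm{con},W(\overline{\mathbb{F}}_p)[[t]],t=0}$, with quasi-inverse given by remembering only $\nabla$ and $\varphi$) are essentially the content of that citation.

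The genuine gap is in your middle step: you propose to obtain essential surjectivity by ``reducing a module over $W(\overline{\mathbb{F}}_p)$-coefficients to the flat base change of a module over $\mathbb{Z}_p$-coefficients up to unramified twists'' and then invoking \cref{proposition4.7}. Faithful flatness of $\pi_{\mathrm{an},\mathrm{con},\mathbb{Z}_p[[t]],t=0}\to\pi_{\mathrm{an},\mathrm{con},W(\overline{\mathbb{F}}_p)[[t]],t=0}$ only tells you that base-changed objects behave well; it gives no control over objects that are \emph{not} in the essential image, and the extension $W(\overline{\mathbb{F}}_p)/\mathbb{Z}_p$ is an infinite pro-\'etale extension along which a general finite free $(\varphi,\nabla)$-module over the big Robba ring need not descend, even up to unramified twist. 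Making such a descent precise would require a separate Galois/continuity argument (and a preservation-of-holonomicity argument on the $D$-module side) that you have not supplied, and none of it is needed: the cited equivalence is already available over $W(\overline{\mathbb{F}}_p)$ directly. If you want to keep your base-change discussion, restrict it to the (true and easy) statement that the two equivalences are compatible with base change, rather than using base change to prove the $W(\overline{\mathbb{F}}_p)$ case.
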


\begin{proof}
This is well-known, for instance see \cite[Proposition in Section 2.1.4]{AM}.	
\end{proof}

\newpage
\subsection{Derived $(\varphi,\nabla)$-Modules over Stacks}

\indent In our current situation, we also consider the corresponding arithmetic differential modules over the stack $(\mathrm{Spec}\mathbb{F}_p((t)))_k/\varphi_k$ where $k=\overline{\mathbb{F}}_p$. And we also consider the corresponding $(\sigma,\nabla)$-modules over the stack $(\mathrm{Spec}\mathbb{F}_p((t)))_k/\varphi_k$ as well. Here we follow \cite{Abe} to use the corresponding framework of all the arithmetic $D$-modules over stacks.

\begin{notation}
We consider the lift $\mathbb{Z}_p[[t]]$ of the original lifting space, and we denote this $\mathrm{Spf}(\mathbb{Z}_p[[t]])$ by $\{x,\eta\}$ with $\mathrm{Spf}(\mathbb{Z}_p[[t]])_k$ by $\{\widetilde{x},\widetilde{\eta}\}$. We then have the corresponding stack $\{\widetilde{x},\widetilde{\eta}\}/\varphi_k$ with two substacks $\{\widetilde{x}\}/\varphi_k$ and $\{\widetilde{\eta}\}/\varphi_k$.	
\end{notation}

\indent \cite{Abe} defined the corresponding derived categories of arithmetic $D$-modules with the desired hearts over stacks. This is not trivial at all since we have to use the corresponding cohomological descent to realize the seemingly virtual objects through representations from schemes. In our situation we have three stacks: $\{\widetilde{x},\widetilde{\eta}\}/\varphi_k$ with two substacks $\{\widetilde{x}\}/\varphi_k$ and $\{\widetilde{\eta}\}/\varphi_k$, admitting the corresponding covering from $\{\widetilde{x},\widetilde{\eta}\}$ with two substacks $\{\widetilde{x}\}$ and $\{\widetilde{\eta}\}$:
\begin{align}
\{\widetilde{x},\widetilde{\eta}\}\rightarrow \{\widetilde{x},\widetilde{\eta}\}/\varphi_k,\\
\{\widetilde{x}\}\rightarrow \{\widetilde{x}\}/\varphi_k,\\
\{\widetilde{\eta}\}\rightarrow \{\widetilde{\eta}\}/\varphi_k.
\end{align}

\begin{definition}
We then consider the following bounded derived categories of holonomic arithmetic $D$-modules over the sheaves of overconvergent rings of differential operators over the stacks defined by Abe in \cite[Chapter 2.1]{Abe}:
\begin{align}
D^{\flat,\mathrm{holo},\varphi}_{\mathrm{con},W(\mathbb{F}_p)[[t]]_k/\varphi_k,t=0}
\end{align}
with overconvergence along $t=0$ and 
\begin{align}
D^{\flat,\mathrm{holo},\varphi}_{\mathrm{con},W(\mathbb{F}_p)[[t]]_k/\varphi_k}
\end{align}
without overconvergence along $t=0$. We use the following notations to denote the corresponding hearts:
\begin{align}
h^{\flat,\mathrm{holo},\varphi}_{\mathrm{con},W(\mathbb{F}_p)[[t]]_k/\varphi_k,t=0}
\end{align}
with overconvergence along $t=0$ and 
\begin{align}
h^{\flat,\mathrm{holo},\varphi}_{\mathrm{con},W(\mathbb{F}_p)[[t]]_k/\varphi_k}
\end{align}
without overconvergence along $t=0$.
\end{definition}

\indent We also have the corresponding derived categories over the corresponding spaces:

\begin{definition}
We then consider the following bounded derived categories of holonomic arithmetic $D$-modules over the sheaves of overconvergent rings of differential operators over the stacks defined by Abe in \cite[Chapter 2.1]{Abe}:
\begin{align}
D^{\flat,\mathrm{holo},\varphi}_{\mathrm{con},W(?)[[t]],t=0}
\end{align}
with overconvergence along $t=0$ and 
\begin{align}
D^{\flat,\mathrm{holo},\varphi}_{\mathrm{con},W(?)[[t]]}
\end{align}
without overconvergence along $t=0$. We use the following notations to denote the corresponding hearts:
\begin{align}
h^{\flat,\mathrm{holo},\varphi}_{\mathrm{con},W(?)[[t]],t=0}
\end{align}
with overconvergence along $t=0$ and 
\begin{align}
h^{\flat,\mathrm{holo},\varphi}_{\mathrm{con},W(?)[[t]]}
\end{align}
without overconvergence along $t=0$. Here $?=\mathbb{F}_p,\overline{\mathbb{F}}_p$.
\end{definition}

\indent By using the corresponding equivalence in \cref{proposition4.7} and \cref{proposition4.8} we have the following bounded derived categories with the associated hearts of the $(\varphi,\nabla)$-modules over the Robba rings:

\begin{definition}
We then consider the following bounded derived categories of $(\varphi,\nabla)$-modules over the sheaves of overconvergent rings of differential operators of order zero over the spaces:
\begin{align}
D^{\flat,\varphi,\nabla}_{\mathrm{con},W(?)[[t]],t=0}
\end{align}
with overconvergence along $t=0$ and 
\begin{align}
D^{\flat,\varphi,\nabla}_{\mathrm{con},W(?)[[t]]}
\end{align}
without overconvergence along $t=0$. We use the following notations to denote the corresponding hearts:
\begin{align}
h^{\flat,\varphi,\nabla}_{\mathrm{con},W(?)[[t]],t=0}
\end{align}
with overconvergence along $t=0$ and 
\begin{align}
h^{\flat,\varphi,\nabla}_{\mathrm{con},W(?)[[t]]}
\end{align}
without overconvergence along $t=0$. Here $?=\mathbb{F}_p,\overline{\mathbb{F}}_p$.
\end{definition}

\indent Then we define things over the key stack involved.

\begin{definition}
We now define the derived category of all the $(\varphi,\nabla)$-modules over the bounded Robba rings associated with the stack $\{\widetilde{x},\widetilde{\eta}\}$ with two substacks $\{\widetilde{x}\}$ and $\{\widetilde{\eta}\}$:
\begin{align}
\overline{D}^{\flat,\varphi,\nabla}_{\mathrm{con},W(\overline{\mathbb{F}}_p)[[t]]/\varphi_k,t=0}:=\overline{D}^\flat(\overline{h}^{\flat,\varphi,\nabla}_{\mathrm{con},W(\overline{\mathbb{F}}_p)[[t]]/\varphi_k,t=0})	
\end{align}
and 
\begin{align}
\overline{D}^{\flat,\varphi,\nabla}_{\mathrm{con},W(\overline{\mathbb{F}}_p)[[t]]/\varphi_k}:=\overline{D}^\flat(\overline{h}^{\flat,\varphi,\nabla}_{\mathrm{con},W(\overline{\mathbb{F}}_p)[[t]]/\varphi_k}).
\end{align}
Here the categories with $\overline{h}$ symbol on the right are define in the following way. First $\overline{h}^{\flat,\varphi,\nabla}_{\mathrm{con},W(\overline{\mathbb{F}}_p)[[t]]/\varphi_k,t=0}$ is defined to be a stack fibered over the stack $h^{\flat,\varphi,\nabla}_{\mathrm{con},W(\overline{\mathbb{F}}_p)[[t]],t=0}$ parametrizing the corresponding objects $(M,\varphi,\nabla,\varphi_{1},\varphi_k)$ where $(M,\varphi,\nabla)$ is parametrized by the stack below. Then we consider the derived category $D^\flat(\overline{h}^{\flat,\varphi,\nabla}_{\mathrm{con},W(\overline{\mathbb{F}}_p)[[t]]/\varphi_k,t=0})$. And we assume the corresponding cohomology groups live in the same abelian category. Each object in the corresponding derived category defined in this way will be some $(M^\bullet,\varphi,\varphi_{1},\varphi_k)$ where $M^\bullet$ is bounded complex of $(\varphi,\nabla)$-modules over the Robba ring, and $\varphi_{1},\varphi_k$ are two Frobenius morphisms such that we have:
\begin{align}
\varphi_1^*(...\rightarrow M^m \rightarrow M^{m+1}...)\overset{\sim}{\longrightarrow} (...\rightarrow M^k \rightarrow M^{k+1}...),\\
\varphi_k^*(...\rightarrow M^m \rightarrow M^{m+1}...)\overset{\sim}{\longrightarrow} (...\rightarrow M^k \rightarrow M^{k+1}...).	
\end{align}
Then we do the same construction to the rest one situation. These derived categories could be further endowed with the structure of derived stacks.
\end{definition}

\indent The corresponding arithmetic $D$-modules could be described in the same way:

\begin{definition}
We now define the derived category of all the arithmetic $D$-modules over rings of overconvergent differential operators associated with the stack $\{\widetilde{x},\widetilde{\eta}\}$ with two substacks $\{\widetilde{x}\}$ and $\{\widetilde{\eta}\}$:
\begin{align}
\overline{D}^{\flat,\mathrm{holo},\varphi}_{\mathrm{con},W(\overline{\mathbb{F}}_p)[[t]]/\varphi_k,t=0}:=\overline{D}^\flat(\overline{h}^{\flat,\mathrm{holo},\varphi}_{\mathrm{con},W(\overline{\mathbb{F}}_p)[[t]]/\varphi_k,t=0})	
\end{align}
and 
\begin{align}
\overline{D}^{\flat,\mathrm{holo},\varphi}_{\mathrm{con},W(\overline{\mathbb{F}}_p)[[t]]/\varphi_k}:=\overline{D}^\flat(\overline{h}^{\flat,\mathrm{holo},\varphi}_{\mathrm{con},W(\overline{\mathbb{F}}_p)[[t]]/\varphi_k}).
\end{align}
Here the categories with $\overline{h}$ symbol on the right are define in the following way. First $\overline{h}^{\flat,\mathrm{holo},\varphi}_{\mathrm{con},W(\overline{\mathbb{F}}_p)[[t]]/\varphi_k,t=0}$ is defined to be a stack fibered over the stack $h^{\flat,\mathrm{holo},\varphi}_{\mathrm{con},W(\overline{\mathbb{F}}_p)[[t]],t=0}$ parametrizing the corresponding objects $(M,\varphi,\varphi_{1},\varphi_k)$ where $(M,\varphi)$ is parametrized by the stack below. Then we consider the derived category $D^\flat(\overline{h}^{\flat,\mathrm{holo},\varphi}_{\mathrm{con},W(\overline{\mathbb{F}}_p)[[t]]/\varphi_k,t=0})$. And we assume the corresponding cohomology groups are coherent holonomic arithmetic $D$-modules over the same space. Each object in the corresponding derived category defined in this way will be some $(M^\bullet,\varphi,\varphi_{1},\varphi_k)$ where $M^\bullet$ is bounded complex of arithmetic $D$-modules in $D^{\flat,\mathrm{holo},\varphi}_{\mathrm{con},W(\overline{\mathbb{F}}_p)[[t]],t=0}$, and $\varphi_{1},\varphi_k$ are two Frobenius morphisms such that we have:
\begin{align}
\varphi_1^*(...\rightarrow M^m \rightarrow M^{m+1}...)\overset{\sim}{\longrightarrow} (...\rightarrow M^k \rightarrow M^{k+1}...),\\
\varphi_k^*(...\rightarrow M^m \rightarrow M^{m+1}...)\overset{\sim}{\longrightarrow} (...\rightarrow M^k \rightarrow M^{k+1}...).	
\end{align}
Then we do the same construction to the rest one situation. These derived categories could be further endowed with the structure of derived stacks.
\end{definition}

\begin{theorem}\mbox{\bf{(Kedlaya, \cite[Corollary 4.7]{Ked9})}}
The category $\overline{h}^{\flat,\varphi,\nabla}_{\mathrm{con},W(\overline{\mathbb{F}}_p)[[t]]/\varphi_k,t=0}$ is well-defined, and the category $\overline{h}^{\flat,\mathrm{holo},\varphi}_{\mathrm{con},W(\overline{\mathbb{F}}_p)[[t]]/\varphi_k,t=0}$ is well defined. The category $\overline{h}^{\flat,\varphi,\nabla}_{\mathrm{con},W(\overline{\mathbb{F}}_p)[[t]]/\varphi_k}$ is well-defined, and the category $\overline{h}^{\flat,\mathrm{holo},\varphi}_{\mathrm{con},W(\overline{\mathbb{F}}_p)[[t]]/\varphi_k}$ is well defined. To be more precise these are abelian categories.
	
\end{theorem}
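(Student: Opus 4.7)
The plan is to reduce the four assertions to Kedlaya's theorem cited as \cite[Corollary 4.7]{Ked9} and to transport the resulting abelian structure across the exact equivalences provided by Propositions \ref{proposition4.7} and \ref{proposition4.8}. The four categories are built in strict parallel (with versus without overconvergence along $t=0$, and in the $(\varphi,\nabla)$-module picture versus the arithmetic $D$-module picture), so it is enough to argue one case in detail and to observe that the remaining cases follow by identical reasoning up to a change of coefficient ring.

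First I would treat $\overline{h}^{\flat,\varphi,\nabla}_{\mathrm{con},W(\overline{\mathbb{F}}_p)[[t]]/\varphi_k,t=0}$. Unwinding the definition, an object is a tuple $(M,\varphi,\nabla,\varphi_1,\varphi_k)$ in which $(M,\varphi,\nabla)$ lies in the underlying finite free $(\varphi,\nabla)$-module category over the bounded Robba ring attached to the two-point formal space, and $\varphi_1,\varphi_k$ are compatible partial Frobenius structures whose composite is $\varphi$. Kedlaya's \cite[Corollary 4.7]{Ked9} is precisely the assertion that such quadruples, with morphisms required to intertwine all Frobenius data, form an abelian category, so the first well-definedness claim is immediate. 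The second step is to verify that kernels and cokernels computed in the underlying $(\varphi,\nabla)$-module category inherit the partial Frobenius structures: because pull-back along each $\varphi_i$ is exact on the Robba-ring coefficients and the defining isomorphisms are natural in $M$, kernels and cokernels acquire unique induced $\varphi_1,\varphi_k$, and the cocycle relation $\varphi_1\varphi_k=\varphi$ is preserved. This settles the $(\varphi,\nabla)$-module side in both the $t=0$ and the two-point cases.

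For the holonomic arithmetic $D$-module versions, one transports structure along Propositions \ref{proposition4.7} and \ref{proposition4.8}, which are exact equivalences between finite free $(\varphi,\nabla)$-modules over the Robba rings and holonomic arithmetic Frobenius modules over the associated rings of overconvergent differential operators. Since such an equivalence carries partial Frobenius actions to partial Frobenius actions, the abelian structure on the $(\varphi,\nabla)$-side transports to the $D$-module side verbatim. The main obstacle is checking that holonomicity is stable under the partial Frobenius pull-backs $\varphi_i^{*}$ (not merely under the full composite $\varphi=\varphi_1\varphi_k$), and that the resulting subcategory is closed under the kernels and cokernels produced above; once this stability is in place, holonomicity carves out a Serre subcategory of the ambient abelian category on the $D$-module side, and the abelian structure descends to it. Most of my effort would go into tracking this descent explicitly through Kedlaya's argument, so as to confirm that no higher stacky phenomena appear in the passage from $\{\widetilde{x},\widetilde{\eta}\}$ to $\{\widetilde{x},\widetilde{\eta}\}/\varphi_k$ and that the cohomology-in-the-same-heart hypothesis built into the definition is consistent with the partial Frobenius descent.
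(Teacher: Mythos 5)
Your proposal is consistent with the paper, which offers no argument of its own here beyond attributing the statement to Kedlaya's Corollary 4.7: like you, it treats the abelianness of the quadruple categories $(M,\varphi,\nabla,\varphi_1,\varphi_k)$ as Kedlaya's input, and your transport of the abelian structure across the equivalences of \cref{proposition4.7} and \cref{proposition4.8}, with kernels and cokernels inheriting the partial Frobenius data by exactness and naturality of Frobenius pullback, is precisely the routine verification the paper leaves implicit. The one step you leave conditional --- stability of holonomicity under the partial Frobenius pullbacks rather than only under the composite $\varphi$ --- is standard Frobenius descent in the arithmetic $D$-module theory and poses no real obstacle, so nothing essential is missing from your sketch.
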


\begin{proposition} \label{proposition4.15}
With the corresponding notations we have defined so far we have the following equivalence on the corresponding derived categories:
\begin{align}
\overline{D}^{\flat,\varphi,\nabla}_{\mathrm{con},W(\overline{\mathbb{F}}_p)[[t]]/\varphi_k,t=0}\overset{\sim}{\longrightarrow} \overline{D}^{\flat,\mathrm{holo},\varphi}_{\mathrm{con},W(\overline{\mathbb{F}}_p)[[t]]/\varphi_k,t=0}	
\end{align}
and 
\begin{align}
\overline{D}^{\flat,\varphi,\nabla}_{\mathrm{con},W(\overline{\mathbb{F}}_p)[[t]]/\varphi_k}\overset{\sim}{\longrightarrow} \overline{D}^{\flat,\mathrm{holo},\varphi}_{\mathrm{con},W(\overline{\mathbb{F}}_p)[[t]]/\varphi_k}.	
\end{align}
	
\end{proposition}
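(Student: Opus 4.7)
The plan is to bootstrap the space-level equivalence established in \cref{proposition4.7} and \cref{proposition4.8} up to the stacky setting by tracking the additional partial-Frobenius descent data through the functor. By construction, each of the hearts $\overline{h}^{\flat,\varphi,\nabla}_{\mathrm{con},W(\overline{\mathbb{F}}_p)[[t]]/\varphi_k,t=0}$ and $\overline{h}^{\flat,\mathrm{holo},\varphi}_{\mathrm{con},W(\overline{\mathbb{F}}_p)[[t]]/\varphi_k,t=0}$ is defined as a stack fibered over the corresponding space-level heart and parametrizing tuples $(M,\varphi,\nabla,\varphi_1,\varphi_k)$, respectively $(M,\varphi,\varphi_1,\varphi_k)$, where the pair $(\varphi_1,\varphi_k)$ supplies the required Frobenius-equivariance over $\varphi_k$. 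So an equivalence between the stacks downstairs that is compatible with the Frobenius pullback functors $\varphi_1^*$ and $\varphi_k^*$ transports the descent data on one side to the other in an automatic way.

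First I would make the space-level equivalence of \cref{proposition4.7}, \cref{proposition4.8} explicit as a pair of quasi-inverse exact functors $F,G$ between $h^{\flat,\varphi,\nabla}_{\mathrm{con},W(\overline{\mathbb{F}}_p)[[t]],t=0}$ and $h^{\flat,\mathrm{holo},\varphi}_{\mathrm{con},W(\overline{\mathbb{F}}_p)[[t]],t=0}$, checking that they commute with pullback along the partial Frobenii $\varphi_1,\varphi_k$. Concretely, each partial Frobenius acts on both categories by the same arithmetic rule (extension of scalars along the Frobenius lift of $W(\overline{\mathbb{F}}_p)[[t]]$), so natural isomorphisms $F\circ\varphi_i^*\cong\varphi_i^*\circ F$ exist canonically for $i\in\{1,k\}$, which I would verify by unwinding the local description of arithmetic $D^{\dagger}$-modules from \cite[Section 2.1]{AM}. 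Once this compatibility is in place, an object $(M,\varphi,\nabla,\varphi_1,\varphi_k)$ on one side is sent to $(F(M),F(\varphi),F(\varphi_1),F(\varphi_k))$ on the other, giving an equivalence of the stacky hearts.

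Next I would pass from hearts to bounded derived categories. Since both sides are defined as $\overline{D}^\flat$ of the corresponding abelian category of Frobenius-equivariant objects, and since the functor $F$ is exact in the heart (as pointed out in \cite[Section 2.1.4]{AM}), the induced map on bounded complexes descends to a triangulated equivalence $\overline{D}^\flat(F)$ on bounded derived categories, with quasi-inverse induced by $G$. The requirement in the definition that cohomology objects land in the given abelian category is preserved because $F$ and $G$ are exact on the heart and commute with the partial-Frobenius pullbacks termwise, so the isomorphisms $\varphi_i^*M^\bullet\overset{\sim}{\to}M^\bullet$ on one side correspond bijectively to those on the other. The same argument, run after dropping the subscript $t=0$ and replacing overconvergent rings with the analogous non-overconvergent ones, gives the second equivalence.

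The main obstacle I expect is the verification that the space-level Kedlaya-style equivalence from \cite[Section 2.1.4]{AM} genuinely intertwines $\varphi_1^*$ and $\varphi_k^*$ at the level of objects and morphisms, including with the required isomorphism $\varphi_1\varphi_k\cong\varphi$ matching the absolute Frobenius on the product. This compatibility is implicit but not spelled out in loc. cit., and a careful check at the level of the Robba ring $\mathcal{R}$ and its associated arithmetic operator ring is needed. Once this is secured, promoting the equivalence to the stacky derived category is largely a formal matter, and extension to the unbounded or $t=0$-dropped variants follows by the same bookkeeping.
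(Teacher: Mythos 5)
Your proposal is correct and follows essentially the route the paper intends: the paper states \cref{proposition4.15} without a written proof, implicitly relying on the space-level equivalences of \cref{proposition4.7} and \cref{proposition4.8} together with the definition of the stacky hearts as categories of tuples $(M,\varphi,\nabla,\varphi_1,\varphi_k)$, resp.\ $(M,\varphi,\varphi_1,\varphi_k)$, fibered over the space-level hearts, and then passing to bounded derived categories. Your explicit verification that the equivalence intertwines the partial-Frobenius pullbacks $\varphi_1^*$ and $\varphi_k^*$ is precisely the detail that makes this implicit argument complete, so no change of strategy is needed.
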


\newpage
\section{Polydisks and the Quotients}

\indent Now we consider the corresponding situation where we have some product of polydisks. We now choose to look at the corresponding setting up as in the following:

\begin{setting}
We now take the product of $\mathrm{Spf}\mathbb{Z}_p[[t_1]]$ and $\mathrm{Spf}\mathbb{Z}_p[[t_2]]$, as well as the product of the rigid analytic generic fibers namely the polydisks. We will use the corresponding notation $I$ to represent the set of two factors. We then have the corresponding Robba rings $\Pi_I,\Pi_I^\mathrm{bd}$ in this multivariate setting as those in \cite{PZ} and \cite{CKZ}, which carry the corresponding multivariate Frobenius structures by $\varphi_1$ and $\varphi_2$.   	
\end{setting}

\indent We have the following adic spaces:

\begin{displaymath}
\mathrm{Spa}(\pi_{\mathrm{an},\mathrm{con},\mathbb{Z}_p[[t_1]],t_1=0},\pi_{\mathrm{an},\mathrm{con},\mathbb{Z}_p[[t_1]],t_1=0}^+)\times_{\mathbb{Q}_p}\mathrm{Spa}(\pi_{\mathrm{an},\mathrm{con},\mathbb{Z}_p[[t_2]],t_2=0},\pi_{\mathrm{an},\mathrm{con},\mathbb{Z}_p[[t_2]],t_2=0}^+),	
\end{displaymath}
and 
\begin{displaymath}
\mathrm{Spa}(\pi_{\mathrm{con},\mathbb{Z}_p[[t_1]],t_1=0},\pi_{\mathrm{con},\mathbb{Z}_p[[t_1]],t_1=0}^+)\times_{\mathbb{Q}_p}\mathrm{Spa}(\pi_{\mathrm{con},\mathbb{Z}_p[[t_2]],t_2=0},\pi_{\mathrm{con},\mathbb{Z}_p[[t_2]],t_2=0}^+),
\end{displaymath}

and the corresponding $v$-sheaves:

\begin{displaymath}
\mathrm{Spa}(\pi_{\mathrm{an},\mathrm{con},\mathbb{Z}_p[[t_1]],t_1=0},\pi_{\mathrm{an},\mathrm{con},\mathbb{Z}_p[[t_1]],t_1=0}^+)/\varphi_1\times_{\mathbb{Q}_p}\mathrm{Spa}(\pi_{\mathrm{an},\mathrm{con},\mathbb{Z}_p[[t_2]],t_2=0},\pi_{\mathrm{an},\mathrm{con},\mathbb{Z}_p[[t_2]],t_2=0}^+),	
\end{displaymath}
and 
\begin{displaymath}
\mathrm{Spa}(\pi_{\mathrm{con},\mathbb{Z}_p[[t_1]],t_1=0},\pi_{\mathrm{con},\mathbb{Z}_p[[t_1]],t_1=0}^+)/\varphi_1\times_{\mathbb{Q}_p}\mathrm{Spa}(\pi_{\mathrm{con},\mathbb{Z}_p[[t_2]],t_2=0},\pi_{\mathrm{con},\mathbb{Z}_p[[t_2]],t_2=0}^+).
\end{displaymath}\\

\indent We consider the corresponding picture in the convergent unit-root situation first.

\begin{proposition}
The following categories are equivalent:\\
1. The category of all the $\varphi_I$-\'etale $\varphi_I$-modules over perfect bounded Robba ring $\widetilde{\Pi}_I^\mathrm{bd}$ (as in \cite[Section 2.3]{CKZ});\\
2. The category of all the $\mathbb{Q}_p$-representations of:
\begin{align}
\mathrm{Gal}(\widehat{\mathbb{F}_p((t_1^{1/p^\infty}))})\times \mathrm{Gal}(\widehat{\mathbb{F}_p((t_2^{1/p^\infty}))});	
\end{align}
3. The category of all the $\mathbb{Q}_p$-representations of:
\begin{align}
\mathrm{Gal}({\mathbb{F}_p((t_1^{}))})\times \mathrm{Gal}({\mathbb{F}_p((t_2^{}))});
\end{align}	
4. The category of all the $\mathbb{Q}_p$-representations of:
\begin{align}
\mathrm{Gal}(\mathrm{Spec}\mathbb{F}_p((t_1))/\varphi_1\times \mathrm{Spec}\mathbb{F}_p((t_2)));
\end{align}	
5. The category of all the $\varphi$-unit root convergent $F$-Isocrystals	carrying the corresponding actions of $\varphi_1$ and $\varphi_2$.

\end{proposition}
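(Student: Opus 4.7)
The plan is to build a chain of equivalences $1 \leftrightarrow 2 \leftrightarrow 3 \leftrightarrow 4 \leftrightarrow 5$, each step already essentially in the literature that the paper has cited. The main task is to assemble them and check the compatibility with the two partial Frobenius structures $\varphi_1,\varphi_2$.

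For $1 \leftrightarrow 2$, I would invoke the multivariate Fontaine-style equivalence for perfect bounded Robba rings as formulated in \cite{CKZ} and \cite{PZ}: in the single-variable case, \'etale $\varphi$-modules over $\widetilde{\mathcal{E}}^\dagger$ correspond to continuous $\mathbb{Q}_p$-representations of $\mathrm{Gal}(\widehat{\mathbb{F}_p((t^{1/p^\infty}))})$, and the bivariate generalization in loc.\ cit.\ replaces the single Frobenius with the commuting pair $\varphi_1,\varphi_2$, matching the decomposition of the Galois action of the product group factor-by-factor. For $2 \leftrightarrow 3$, I would use the tilting/perfection correspondence: each extension $\widehat{\mathbb{F}_p((t_i^{1/p^\infty}))}/\mathbb{F}_p((t_i))$ is the completed perfect closure, and since perfection is purely inseparable and completion does not affect separable algebraic extensions, the absolute Galois groups are canonically identified (one may equivalently apply Fontaine-Wintenberger in its field-of-norms form on each factor). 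Applying this on each factor yields the identification of the two product groups, and hence of their representation categories.

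For $3 \leftrightarrow 4$, this is Drinfeld's lemma (Proposition 1.3 of this chapter) applied to the pair $X_1=\mathrm{Spec}\mathbb{F}_p((t_1))$ and $X_2=\mathrm{Spec}\mathbb{F}_p((t_2))$: the profinite fundamental group of the quotient stack $X_1\times X_2/\varphi_1$ is the product of the absolute Galois groups of the two factors, so the $\mathbb{Q}_p$-representation categories agree. For $4 \leftrightarrow 5$, I would combine this with the Katz-Crew equivalence as already used in Corollary \ref{corollary2.4}: unit-root convergent $F$-isocrystals on each factor correspond to $\mathbb{Q}_p$-\'etale local systems, and the bivariate stacky version follows because a unit-root convergent isocrystal on the product equipped with the commuting data of $\varphi_1$ and $\varphi_2$ is, by definition, an object whose underlying Galois representation carries two commuting actions -- which is exactly a representation of the stacky fundamental group from step 4.

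The main obstacle I anticipate is the precise articulation of step $1 \leftrightarrow 2$ (and its coherence with step $2 \leftrightarrow 3$) at the multivariate level: one must confirm that the bivariate \'etale $\varphi_I$-module theory of Carter-Kedlaya-Z\'abr\'adi really sees the full product Galois group and not a completed or restricted variant, and that the $\varphi_i$-actions on the module side match the decomposition of the Galois action induced by the product structure. Once this is verified, steps $3 \leftrightarrow 4$ and $4 \leftrightarrow 5$ are purely a matter of bookkeeping: Drinfeld's lemma provides the group-theoretic identification, Katz-Crew provides the passage between Galois representations and unit-root isocrystals, and the extra partial Frobenius data is tracked through both equivalences in the obvious way.
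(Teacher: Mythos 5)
Your proposal follows exactly the same chain as the paper's proof: step $1\leftrightarrow 2$ via the Carter--Kedlaya--Z\'abr\'adi multivariate equivalence (the paper cites \cite[Theorem 6.16]{CKZ}), step $2\leftrightarrow 3$ via the invariance of Galois groups under completed perfection (which the paper dismisses as trivial), step $3\leftrightarrow 4$ via Drinfeld's lemma, and step $4\leftrightarrow 5$ via the Katz--Crew unit-root equivalence \cite[Theorem 2.1]{Cr2}. Your extra care about matching the partial Frobenius data in the $1\leftrightarrow 2$ step is a reasonable elaboration but not a departure from the paper's argument.
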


\begin{proof}
From 1 to 2 this is the corresponding \cite[Theorem 6.16]{CKZ}. From 2 to 3, this is trivial. From 3 to 4 this is the corresponding Drinfeld's lemma in its original form. From 4 to 5 this is following Crew-Katz \cite[Theorem 2.1]{Cr2}. 	
\end{proof}

\indent We now consider the corresponding arithmetic $D$-modules over polydisks and the corresponding quotient stacks. We first consider the corresponding arithmetic $D$-modules over the product, namely we consider the following:

\begin{setting}
We consider the corresponding ring $\Pi_I^\mathrm{bd}$ with the corresponding ring of differential operators $D_{\mathrm{con},I,t_1=t_2=0}^\mathrm{bd}$. We then have the corresponding derived category of all the holonomic arithmetic $D$-modules over $D_{\mathrm{con},I,t_1=t_2=0}^{\flat,\mathrm{holo},\varphi}$ carrying the corresponding Frobenius $\varphi$. 	
\end{setting}

\begin{remark}
Please note that the corresponding notation $t_1=t_2=0$ does not indicate the corresponding point $(0,0)$, which essentially means the larger union coming from points where $t_1=0$ or $t_2=0$.	
\end{remark}

\indent As in the corresponding situation we encountered before we consider the following derived stacks fibered over the categories considered in the previous setting.

\begin{setting}
We have the corresponding derived category of all the holonomic arithmetic $D$-modules over $D_{\mathrm{con},I,t_1=t_2=0}^{\flat,\mathrm{holo},\varphi}$ carrying the corresponding Frobenius $\varphi$. We now look at the corresponding quotient regarded as adic stack:
\begin{displaymath}
\mathrm{Sp}(\pi_{\mathrm{con},\mathbb{Z}_p[[t_1]],t_1=0})/\varphi_1\times_{\mathbb{Q}_p}\mathrm{Sp}(\pi_{\mathrm{con},\mathbb{Z}_p[[t_2]],t_2=0}).	
\end{displaymath}
Then we consider the corresponding bounded derived category of all the holonomic arithmetic $D$-modules over the stack as above, which we denote it by $\overline{D}_{\mathrm{con},I/\varphi_1,t_1=t_2=0}^{\flat,\mathrm{holo},\varphi}$ which is defined by using bounded derived category of the abelian category $\overline{h}_{\mathrm{con},I/\varphi_1,t_1=t_2=0}^{\flat,\mathrm{holo},\varphi}$ consisting of all the object taking the form of $(M,\varphi,\varphi_1,\varphi_2)$ where we have that $(M,\varphi)$ lives in the heart $h_{\mathrm{con},I,t_1=t_2=0}^{\flat,\mathrm{holo},\varphi}$.
	
\end{setting}

\begin{definition}
For any perfectoid affinoid $\mathrm{Spa}(A,A^+)$ covering the diamond $\mathrm{Sp}(\pi_{\mathrm{con},\mathbb{Z}_p[[t_1]],t_1=0})/\varphi_1$, we will consider those objects over the ring $A\widehat{\otimes}_{\mathbb{Q}_p}\pi_{\mathrm{con},\mathbb{Z}_p[[t_2]],t_2=0}$ and $A\widehat{\otimes}_{\mathbb{Q}_p}D_{\mathrm{con},\mathbb{Z}_p[[t_2]],t_2=0}$. We first define a $(\varphi_2,\nabla_2)$ modules over $A\widehat{\otimes}_{\mathbb{Q}_p}\pi_{\mathrm{con},\mathbb{Z}_p[[t_2]],t_2=0}$ to be finite projective module over $A\widehat{\otimes}_{\mathbb{Q}_p}\pi_{\mathrm{con},\mathbb{Z}_p[[t_2]],t_2=0}$ carrying the corresponding Frobenius and the corresponding connection $\nabla$   which is $A$ linear. Then we define over the diamond $\mathrm{Sp}(\pi_{\mathrm{con},\mathbb{Z}_p[[t_1]],t_1=0})/\varphi_1$ the corresponding relative $(\varphi_2,\nabla_2)$-modules which are by the natural glueing of the families of such objects defined as in the above. Here the corresponding Frobenius and $\nabla$ are required to be compatible as in the absolute situation.
\end{definition}

\begin{definition}\label{definition6.7}
For any perfectoid affinoid $\mathrm{Spa}(A,A^+)$ covering the diamond $\mathrm{Sp}(\pi_{\mathrm{an},\mathrm{con},\mathbb{Z}_p[[t_1]],t_1=0})/\varphi_1$, we will consider those objects over the ring $A\widehat{\otimes}_{\mathbb{Q}_p}\pi_{\mathrm{an,con},\mathbb{Z}_p[[t_2]],t_2=0}$ and $A\widehat{\otimes}_{\mathbb{Q}_p}D_{\mathrm{an,con},\mathbb{Z}_p[[t_2]],t_2=0}$. We first define a $(\varphi_2,\nabla_2)$ modules over $A\widehat{\otimes}_{\mathbb{Q}_p}\pi_{\mathrm{an,con},\mathbb{Z}_p[[t_2]],t_2=0}$ to be finite projective module over $A\widehat{\otimes}_{\mathbb{Q}_p}\pi_{\mathrm{an,con},\mathbb{Z}_p[[t_2]],t_2=0}$ carrying the corresponding Frobenius and the corresponding connection $\nabla$   which is $A$ linear. Then we define over the diamond $\mathrm{Sp}(\pi_{\mathrm{an},\mathrm{con},\mathbb{Z}_p[[t_1]],t_1=0})/\varphi_1$ the corresponding relative $(\varphi_2,\nabla_2)$-modules which are by the natural glueing of the families of such objects defined as in the above. Here the corresponding Frobenius and $\nabla$ are required to be compatible as in the absolute situation.
\end{definition}

\begin{definition}
Consider $\nabla_2$ and the context in the previous two definitions, we can define the corresponding generic radius of convergence with respect to each radius for the second factor relative to some perfectoid Banach affinoid $A$. After realizing the corresponding bundle as some section over some interval $[s_2,r_2]$, we define the corresponding radius $R^{\mathrm{pre}}(M,\rho_2)$ for each $\rho_2\in [s_2,r_2]$ to be:
\begin{align}
\min \{p^{-1/(p-1)}\rho_2,\varlimsup_{r\rightarrow \infty}\|{\partial_{t_2}^r}\|_{A}^{1/r}\}.	
\end{align}
Here $\partial_{t_2}=\frac{\partial}{\partial{t_2}}$. We call the module is basically solvable at $1$ relative to $A$ if we have that the following equality:
\begin{displaymath}
\lim_{\rho_2\rightarrow 1}p^{-1/(p-1)}R^\mathrm{pre}(M,\rho_2)^{-1}\rho_2^{-1}=1.	
\end{displaymath}	

\end{definition}

\indent The corresponding consideration is as in the following. This sort of observation is inspired by those due to Kedlaya \cite{Ked10} (more precise \cite[Chapter 6]{Ked10}). First by taking the quotient by $\varphi_1$ of the corresponding product adic space, starting from any corresponding overconvergent $F$-isocrystal $M$ over this adic stack, we can regard $M$ as a corresponding $(\varphi_2,\nabla_2)$-modules over the stack relative to the stack forming by the quotient of the first factor.

\begin{conjecture}
For any $\Phi$-equivariant $F$-isocrystal $(M,\varphi,\varphi_1,\varphi_2)$ in the category $\overline{h}_{\mathrm{con},I/\varphi_1,t_1=t_2=0}^{\flat,\mathrm{holo},\varphi}$. We regard this as a corresponding object taking the structure of relative (to the stack $\mathrm{Sp}(\pi_{\mathrm{con},\mathbb{Z}_p[[t_1]],t_1=0})/\varphi_1$)	$(\varphi_2,\nabla_2)$-module over the corresponding ring
\begin{center}
 $\mathrm{Sp}(\pi_{\mathrm{con},\mathbb{Z}_p[[t_1]],t_1=0})/\varphi_1\widehat{\otimes}_{\mathbb{Q}_p}\pi_{\mathrm{con},\mathbb{Z}_p[[t_2]],t_2=0}$. 
\end{center}
Then we have that with respect to this $(\varphi_2,\nabla_2)$-module structure over relative Robba ring with coefficients in the diamond \cite{Sch}, $M$ is solvable at 1 uniformly with respect each fiber $M_{x_1}$ for each $x_1$ over the stack $\mathrm{Sp}(\pi_{\mathrm{con},\mathbb{Z}_p[[t_1]],t_1=0})/\varphi_1$) namely we have $\lim_{\rho_2\rightarrow 1}R(M_{x_1},\rho_2)\rho_2^{-1}=1$ uniformly for all $x_1\in \mathrm{Sp}(\pi_{\mathrm{con},\mathbb{Z}_p[[t_1]],t_1=0})/\varphi_1$.
\end{conjecture}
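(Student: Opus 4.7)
The plan is to deduce uniform solvability at $1$ from the partial Frobenius $\varphi_2$, by combining the classical fiberwise solvability theorem for $(\varphi,\nabla)$-modules on the Robba ring with a Frobenius descent for the generic radius of convergence carried out uniformly over a perfectoid cover of the base diamond $\mathrm{Sp}(\pi_{\mathrm{con},\mathbb{Z}_p[[t_1]],t_1=0})/\varphi_1$. The central input is that a Frobenius structure on a differential module over the Robba ring automatically forces its generic radius to converge to the boundary, and this phenomenon is local in nature, so it should globalize over the stack.

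First I would establish fiberwise solvability. For any geometric point $x_1$ of the base diamond, the restriction $M_{x_1}$ is a classical $(\varphi_2,\nabla_2)$-module over the Robba ring associated with the residue field at $x_1$, so Kedlaya's theorem (see e.g.\ \cite{Ked8}) gives $\lim_{\rho_2\to 1^{-}} R(M_{x_1},\rho_2)\,\rho_2^{-1}=1$ pointwise. To upgrade this to a uniform bound, I would use the partial Frobenius isomorphism $\varphi_2^{\ast} M\simeq M$ together with the Christol--Mebkhout--Kedlaya Frobenius-pullback formula for the generic radius (roughly, the radius at $\rho_2$ is controlled by the radius at $\rho_2^{1/p}$ up to an explicit factor). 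Iterating this stability relation propagates the a priori control of $\nabla_2$ on an inner annulus, which is built into the definition of the overconvergent ring $\pi_{\mathrm{con},\mathbb{Z}_p[[t_2]],t_2=0}$, towards $\rho_2=1$.

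To globalize in $x_1$, I would work over a perfectoid affinoid cover $\mathrm{Spa}(A,A^+)\to \mathrm{Sp}(\pi_{\mathrm{con},\mathbb{Z}_p[[t_1]],t_1=0})/\varphi_1$ and view $M$ as a relative $(\varphi_2,\nabla_2)$-module on $A\widehat{\otimes}_{\mathbb{Q}_p}\pi_{\mathrm{con},\mathbb{Z}_p[[t_2]],t_2=0}$ as in \cref{definition6.7}. The operator norm $\varlimsup_r \|\partial_{t_2}^r\|_A^{1/r}$ is continuous in $\rho_2$, and the Frobenius iteration argument of the previous paragraph applies uniformly in $A$ because the constants in the Frobenius-radius formula depend only on the module data and not on the point $x_1$. $v$-descent along the cover, in the sense of \cite{Sch}, then yields the uniform limit statement on the stack.

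The hard part will be verifying the Christol--Mebkhout--Kedlaya Frobenius-descent formula for the generic radius in this relative setting. In the classical situation the base is a point and the formula is proved via explicit Gauss norms on annuli; here the annuli carry the additional $A$-adic parameter and one must show the formula is compatible with arbitrary perfectoid base change. A secondary subtlety is the interaction between $\varphi_1$, which governs descent to the quotient stack, and $\varphi_2$, whose Frobenius estimate drives solvability: the compatibility of the partial Frobenii with $\nabla_2$ should make this routine but needs to be checked carefully. I expect the techniques of \cite{KX} on families of $(\varphi,\nabla)$-modules, together with the diamond machinery of \cite{Sch}, to supply the necessary inputs.
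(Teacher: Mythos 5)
First, a point of record: the paper offers no proof of this statement at all --- it is stated as a conjecture, and the remark immediately following it pinpoints why: fiberwise (over a single point $x_1$) the underlying $(\varphi_2,\nabla_2)$-module is indeed solvable at $1$, ``however the issue is that the corresponding family version of this condition is not automatically guaranteed.'' So there is no paper argument to compare yours against; the question is whether your proposal actually closes the open point, and it does not. Your first step (fiberwise solvability from the Frobenius structure via the Frobenius antecedent/pullback formula for the generic radius) is standard and unobjectionable. But the entire content of the conjecture is the uniformity in $x_1$, and that is exactly where your argument is an assertion rather than a proof: the sentence ``the Frobenius iteration argument applies uniformly in $A$ because the constants in the Frobenius-radius formula depend only on the module data and not on the point $x_1$'' is precisely the statement that needs to be established. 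To run the iteration uniformly you would need (i) a uniform lower bound on $R(M_{x_1},\rho_2)$ at some fixed inner radius $\rho_0$, valid at every point of the base, including non-classical (rank $>1$, perfectoid) points where $M_{x_1}$ lives over a Robba ring with coefficients in a completed residue field and where formation of the radius need not commute with base change; (ii) a relative Frobenius antecedent theorem over $A\widehat{\otimes}_{\mathbb{Q}_p}\pi_{\mathrm{con},\mathbb{Z}_p[[t_2]],t_2=0}$ with explicit constants independent of $A$; and (iii) an argument that a quantitative limit statement of this kind descends along a $v$-cover --- upper/lower semicontinuity of $\rho_2\mapsto R(M_{x_1},\rho_2)$ in the $x_1$-direction is a genuine issue, not something \cite{Sch} hands you. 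Your own final paragraph concedes that the relative Christol--Mebkhout--Kedlaya formula is unverified in this setting, so what you have is a plausible strategy outline, not a proof.

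A secondary inaccuracy: the relative object in the conjecture is a module over the bounded ring $\pi_{\mathrm{con}}$ (the $\mathcal{E}^\dagger$-type ring), completed-tensored with a perfectoid Banach algebra $A$, and the radius function of the definition preceding the conjecture is built from $\varlimsup_r\|\partial_{t_2}^r\|_A^{1/r}$ on a section over an interval $[s_2,r_2]$; your claim that this quantity ``is continuous in $\rho_2$'' and that boundedness of $\nabla_2$ on an inner annulus ``is built into the definition'' of the overconvergent ring would both need proofs in the relative setting, and the interaction of the $\varphi_1$-equivariance (which is what defines the quotient stack) with these norms is not addressed beyond saying it ``should be routine.'' If you want to make progress on this conjecture, the place to start is exactly items (i)--(iii) above, presumably following the family-of-$(\varphi,\nabla)$-modules techniques of \cite{KL1}, \cite{KL2} and \cite{Ked10} rather than treating them as known inputs.
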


\begin{remark}
Over the corresponding point $x_1$ we have that the resulting underlying $(\varphi_2,\nabla_2)$-module structure should be definitely overconvergent namely solvable at 1.	However the issue is that the corresponding family version of this condition is not automatically guaranteed.
\end{remark}

\begin{remark}
This will indicate some possible form of Drinfeld's lemma in this context, namely any $F$-isocrystal in the category $\overline{h}_{\mathrm{an,con},I/\varphi_1,t_1=t_2=0}^{\flat,\mathrm{holo},\varphi}$ is conjectured to behave as if it is overconvergent with respect the $(\varphi_2,\nabla_2)$-module structure relative to the diamond $\mathrm{Sp}(\pi_{\mathrm{an},\mathrm{con},\mathbb{Z}_p[[t_1]],t_1=0})/\varphi_1$. Moreover this should also have the analog in the perverse $p$-adic differential equation setting, and even in the setting after \cite{CKZ}, \cite{KL1} and \cite{KL2} by considering diamond coefficients \cite{Sch}, \cite{SW}. 
\end{remark}

Let us mention a little bit about the corresponding some motivation from the work \cite{Ked9} around the corresponding relative Frobenius modules and relative differential equations. Consider now the scheme $X$ we specified at the very beginning of this paper, and take $k=\overline{\mathbb{F}}_p$.

\begin{definition}
We define the category $\overline{h}^{\flat,\mathrm{holo},\varphi}_{\mathrm{con},X_k/\varphi_k}$ to be the corresponding categories fibered over $h^{\flat,\mathrm{holo},\varphi}_{\mathrm{con},X_k}$ respectively endowed with further pullbacks of $\varphi_1$ and $\varphi_k$ realizing the isomorphisms as in the previous section.	
\end{definition}

\begin{theorem}\mbox{\bf{(Kedlaya \cite[Theorem 7.3, Corollary 7.4]{Ked9})}}
For any F-isocrystal object $M$ in $\overline{h}^{\flat,\mathrm{holo},\varphi}_{\mathrm{con},X_k/\varphi_k}$ we have that there is a decomposition of $M$ with respect to the slopes for $\varphi_k$:
\begin{displaymath}
M=\bigoplus_{a/b}M_{a/b}	
\end{displaymath}
such that for each $M_{a/b}$ we have that $M_{a/b}^{\varphi_k^b-p^a}$ is an $F$-isocrystal over $X$.
\end{theorem}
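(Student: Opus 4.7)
The plan is to first obtain a slope decomposition of $M$ with respect to the partial Frobenius $\varphi_k$, and then to check that on each pure slope piece the subobject $M_{a/b}^{\varphi_k^b-p^a}$ descends via Frobenius descent along $X_k\to X$ to an $F$-isocrystal over $X$.

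First I would reduce to the case where $M$ is concentrated in a single cohomological degree, using devissage on the triangulated structure together with the theorem stated above that guarantees the hearts are preserved. Since $\varphi_k$ is induced by the absolute Frobenius of $k=\overline{\mathbb{F}}_p$, it is relatively linear over $X$, commutes with the connection $\nabla$, and commutes with the other partial Frobenius $\varphi_1$ (these compatibilities are baked into the definition of $\overline{h}^{\flat,\mathrm{holo},\varphi}_{\mathrm{con},X_k/\varphi_k}$). At each geometric point of $X$ the fiber of $M$ is an $F$-isocrystal over $W(\overline{\mathbb{F}}_p)[1/p]$ with a compatible second Frobenius $\varphi_k$, so classical Dieudonn\'e--Manin applies and yields a pointwise decomposition into pure slopes of $\varphi_k$. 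I would then upgrade this to a global filtration using Kedlaya's slope filtration theorem, and promote the filtration to a genuine direct sum by exploiting the $\varphi_k$-equivariance: the isomorphism $\varphi_k^{*}M\simeq M$ forces the Newton polygon of $\varphi_k$ to be constant across $X$, and a standard Frobenius-descent-of-automorphisms argument splits the slope filtration canonically.

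Next I would analyze each pure slope component $M_{a/b}$. On such a piece the operator $\varphi_k^b-p^a$ is topologically nilpotent by the purity of the slope, and therefore $N_{a/b}:=M_{a/b}^{\varphi_k^b-p^a}$ (the kernel, equivalently the $p^a$-eigenspace of $\varphi_k^b$) is a saturated subobject with the natural map $N_{a/b}\otimes_{\mathbb{Z}_p}\widehat{\mathcal{O}}\to M_{a/b}$ an isomorphism. Now invoke the Katz--Crew equivalence in its twisted form (which reduces to the unit-root case after trivializing the slope $a/b$ by the cyclotomic twist by $p^a$): $\varphi_k$-equivariant objects on $X_k$ on which $\varphi_k^b$ acts as $p^a$ correspond precisely to objects on $X$. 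The compatibility $\varphi=\varphi_1\circ\varphi_k$ then supplies the Frobenius structure on the descended object, since on $N_{a/b}$ one has $\varphi_1^{b}=p^{-a}\varphi^{b}$, producing a genuine $F$-isocrystal over $X$ as required.

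The main obstacle will be the first step, namely globalizing Dieudonn\'e--Manin into an actual direct-sum decomposition rather than merely a filtration. Over a general base one only has Grothendieck specialization for the Newton polygon, and even when the polygon is constant the slope filtration of an $F$-isocrystal need not split. Here splittability is provided by the $\varphi_k$-equivariance, but to keep the decomposition inside the category of holonomic arithmetic $D$-modules one must verify that the idempotent projectors onto slope pieces commute with $\nabla$ and with $\varphi_1$; this follows from $[\varphi_k,\nabla]=0$ and $[\varphi_k,\varphi_1]=0$, but propagating these commutations through the cohomological descent formalism of \cite{Abe} is where the real work lies.
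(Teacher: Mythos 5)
The first thing to say is that the paper itself contains no proof of this statement: it is imported verbatim as a citation of Kedlaya (\cite[Theorem 7.3, Corollary 7.4]{Ked9}), with no argument given, so there is no internal proof to measure your sketch against; the relevant comparison is with Kedlaya's own argument, which your outline resembles in broad strategy (a Dieudonn\'e--Manin-type decomposition for the partial Frobenius $\varphi_k$ over the algebraically closed field $k$, followed by descent of the fixed part along $X_k\to X$). Within that outline, however, there are concrete gaps. The claim that on the pure slope-$a/b$ piece the operator $\varphi_k^b-p^a$ is ``topologically nilpotent by purity'' is false and is in any case not the point: purity says that $p^{-a}\varphi_k^b$ is a unit-root (\'etale) operator, and what you actually need is that its fixed points span $M_{a/b}$ after extension of scalars, which is a Lang/Dieudonn\'e--Manin-type statement relying on $k$ being algebraically closed, not a nilpotence statement. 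Similarly, the step from a slope filtration to a genuine direct-sum decomposition is asserted rather than proved: saying that ``the isomorphism $\varphi_k^{*}M\simeq M$ forces the Newton polygon of $\varphi_k$ to be constant'' merely restates the datum defining an object of $\overline{h}^{\flat,\mathrm{holo},\varphi}_{\mathrm{con},X_k/\varphi_k}$; the constancy of the $\varphi_k$-Newton polygon over $X$ and the canonical splitting are precisely the content of Kedlaya's Theorem 7.3, and your own closing sentence concedes that this is ``where the real work lies'' without doing it.

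A second misstep is the descent mechanism. You invoke ``the Katz--Crew equivalence in its twisted form'' to pass from $\varphi_k$-equivariant objects on $X_k$ with $\varphi_k^b=p^a$ to objects on $X$, but Katz--Crew relates unit-root $F$-isocrystals to $p$-adic \'etale local systems on $X$; it is not the tool here. What is needed is Frobenius/Galois descent along the pro-\'etale tower $X_k\to X$ for the $\widehat{\mathbb{Z}}$-action generated by $\varphi_k$ (the same mechanism as in Drinfeld's lemma), and making this work for convergent $F$-isocrystals requires a finiteness or continuity argument (descent from $k$ to a finite subextension) that your sketch does not supply --- this is exactly what Kedlaya's Lemma 7.2 and the surrounding results are for. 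Finally, the opening d\'evissage to a single cohomological degree is unnecessary: the statement concerns an $F$-isocrystal object of the heart, so there is nothing to reduce. In summary, the skeleton (pointwise Dieudonn\'e--Manin, globalization, splitting via $\varphi_k$-equivariance, descent of the $\varphi_k^b=p^a$ part) is the right shape, but the two load-bearing steps --- the global splitting and the descent --- are exactly the ones left unjustified or justified by incorrect appeals.
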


\indent This means that when we have that $X$ is just the formal (punctural) disc over $\mathbb{F}_p$ as we considered above, and when we have the corresponding overconvergence along $t_1=0$ we then have that the corresponding decomposition as above of a general $F$-isocrystal object in $\overline{h}^{\flat,\mathrm{holo},\varphi}_{\mathrm{con},X_k,t_1=0}$ into:
\begin{displaymath}
\bigoplus_{a/b}M_{a/b}	
\end{displaymath}
such that for each $M_{a/b}$ we have that $M_{a/b}^{\varphi_k^b-p^a}$ is a $(\varphi_1,\nabla_1)$-module over the bounded Robba ring of $\mathbb{Q}_p$.

\indent In this current simplified and local situation we have the following proposition:

\begin{proposition}\mbox{\bf{(After Kedlaya \cite[Theorem 7.3, Corollary 7.4]{Ked9})}} \label{proposition5.14}
Consider the following two categories. The first one is the category of all the $(\varphi_1,\nabla_1)$-modules over the punctural local unit disc $\mathbb{Z}_p[[t]][1/t]$. The second one is the corresponding category of all the objects in $\overline{h}^{\flat,\varphi,\nabla}_{\mathrm{con},X_k/\varphi_k}$ taking the general form $M$ such that we have $M$ decomposes as:
\begin{displaymath}
M=M_{0}	
\end{displaymath}
in the sense discussed above, namely in the decomposition:
\begin{displaymath}
\bigoplus_{a/b}M_{a/b}	
\end{displaymath}
we will have just one component with $a/b=0$. Then we have that the two categories are equivalent.
\end{proposition}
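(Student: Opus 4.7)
The plan is to deduce the equivalence directly from Kedlaya's slope decomposition theorem stated immediately above the proposition, by observing that isolating the zero-slope component of $\varphi_k$ is tantamount to prescribing a Frobenius descent datum from $W(\overline{\mathbb{F}}_p)[[t]][1/t]$ down to $\mathbb{Z}_p[[t]][1/t]$. So the strategy is: (i) build a forward functor by passing to $\varphi_k$-fixed vectors; (ii) build a quasi-inverse by base change along $\mathbb{Z}_p \hookrightarrow W(\overline{\mathbb{F}}_p)$; and (iii) check these are mutually inverse using the classical Dieudonn\'e-style descent for zero-slope unit-root Frobenius modules.

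For the forward functor, let $M$ be an object of $\overline{h}^{\flat,\varphi,\nabla}_{\mathrm{con},X_k/\varphi_k}$ whose slope decomposition under $\varphi_k$ reduces to $M = M_0$. Applied with $a=0$ and $b=1$, the preceding theorem of Kedlaya gives that $N := M^{\varphi_k = 1}$ is an $F$-isocrystal over $X$; because $X$ is the punctured formal disc over $\mathbb{F}_p$, this means precisely that $N$ is a $(\varphi_1, \nabla_1)$-module over the bounded Robba ring $\pi_{\mathrm{con},\mathbb{Z}_p[[t]],t=0}$, i.e.\ over $\mathbb{Z}_p[[t]][1/t]$ in the sense of the proposition. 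The $\varphi_1$ and $\nabla_1$ on $N$ are induced from those on $M$ and are well-defined on the $\varphi_k$-invariants precisely because of the compatibilities $\varphi_1^* M \cong M$ and $\varphi_k^* M \cong M$ built into the definition of $\overline{h}^{\flat,\varphi,\nabla}_{\mathrm{con},X_k/\varphi_k}$, which ensure $\varphi_1$ and $\nabla_1$ commute with $\varphi_k$. For the reverse functor, given a $(\varphi_1,\nabla_1)$-module $N$ over $\mathbb{Z}_p[[t]][1/t]$, I would set $M := N \widehat{\otimes}_{\mathbb{Z}_p} W(\overline{\mathbb{F}}_p)$, viewed as an object over $\pi_{\mathrm{con},W(\overline{\mathbb{F}}_p)[[t]],t=0}$. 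It inherits $\varphi_1$ and $\nabla_1$ from $N$ and carries a natural $\varphi_k$ from the canonical Witt vector Frobenius on $W(\overline{\mathbb{F}}_p)$ (acting trivially on $N$), so that $M^{\varphi_k=1} = N$ by construction. In particular the slope of $\varphi_k$ is identically $0$ and the slope decomposition collapses to $M = M_0$.

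The two functors are manifestly quasi-inverse in the direction starting from $N$. In the opposite direction, for $M$ with $M = M_0$, the unit map $M^{\varphi_k = 1} \widehat{\otimes}_{\mathbb{Z}_p} W(\overline{\mathbb{F}}_p) \to M$ is an isomorphism; this is the Frobenius descent for unit-root (zero-slope) isocrystals applied coefficient-wise over the bounded Robba ring. Compatibility with $\varphi_1$ and $\nabla_1$ is automatic from the commutation relations among $\varphi_1$, $\varphi_k$ and $\nabla_1$ imposed in the definition of $\overline{h}^{\flat,\varphi,\nabla}_{\mathrm{con},X_k/\varphi_k}$. The main obstacle, and the one delicate point in the argument, is precisely the verification that Frobenius descent for $\varphi_k$ along $\mathbb{Z}_p \to W(\overline{\mathbb{F}}_p)$ lifts from the coefficient level to the level of $(\varphi_1,\nabla_1)$-modules over the bounded Robba ring: one must check that the $W(\overline{\mathbb{F}}_p)$-linear decomposition provided by Kedlaya's theorem is simultaneously $\nabla_1$-horizontal, so that $\nabla_1$ descends to $N$ without loss of information. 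This is the content of the statement that $M_{a/b}^{\varphi_k^b - p^a}$ is an honest $F$-isocrystal over $X$ in Kedlaya's theorem, and once this is invoked the remainder of the verification is bookkeeping.
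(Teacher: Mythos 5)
Your proposal is correct and follows essentially the same route as the paper: the equivalence is realized by base change along the projection $X_k\rightarrow X$ in one direction, with the quasi-inverse given by $\varphi_k$-invariants, and essential surjectivity (your unit-map isomorphism) supplied by Kedlaya's slope decomposition theorem applied with $a/b=0$. The paper's own proof is simply a terser statement of this same argument.
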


\begin{proof}
The functor realize this equivalent is just taking the corresponding pullback along the corresponding projection $X_k\rightarrow X$. Then to show the corresponding essential surjectivity, we start from a corresponding $(\varphi_1,\varphi_k)$-object over the corresponding product space, denoted by $M_k$ satisfying the corresponding condition in the statement of the proposition. Then result follows from Kedlaya's theorem mentioned above.	
\end{proof}

\begin{corollary} \label{corollary5.15}
Let $\overline{h}^{\flat,\mathrm{holo},\varphi,\varphi_k-\mathrm{ur}}_{\mathrm{con},X_k/\varphi_k}$ denote the corresponding category of objects taking the general form $m$ in $\overline{h}^{\flat,\mathrm{holo},\varphi}_{\mathrm{con},X_k/\varphi_k}$ which as above decomposed as above (equivalently regarded as the corresponding $(\varphi_1,\nabla_1)$-object):
\begin{displaymath}
M=M_0.	
\end{displaymath}
Then we have the following equivalence:
\begin{displaymath}
\overline{h}^{\flat,\mathrm{holo},\varphi,\varphi_k-\mathrm{ur}}_{\mathrm{con},X_k/\varphi_k} \overset{\sim}{\rightarrow} \overline{h}^{\flat,\mathrm{holo},\varphi}_{\mathrm{con},X}	
\end{displaymath}
and 
\begin{displaymath}
D^\flat(\overline{h}^{\flat,\mathrm{holo},\varphi,\varphi_k-\mathrm{ur}}_{\mathrm{con},X_k/\varphi_k}) \overset{\sim}{\rightarrow} D^\flat(\overline{h}^{\flat,\mathrm{holo},\varphi}_{\mathrm{con},X}).	
\end{displaymath}	
\end{corollary}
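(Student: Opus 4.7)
The plan is to deduce this corollary as a direct consequence of \cref{proposition5.14} by transporting that equivalence through the $(\varphi,\nabla)$-to-arithmetic-$D$-module dictionary already established in \cref{proposition4.7}, \cref{proposition4.8}, and \cref{proposition4.15}. The first step is to invoke \cref{proposition5.14}, which identifies the category of $(\varphi_1,\nabla_1)$-modules over the punctured disc $\mathbb{Z}_p[[t]][1/t]$ with precisely the full subcategory of $\overline{h}^{\flat,\varphi,\nabla}_{\mathrm{con},X_k/\varphi_k}$ cut out by the condition $M=M_0$ in the Kedlaya slope decomposition for $\varphi_k$. This already realizes the desired equivalence at the level of $(\varphi,\nabla)$-module categories.

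Next, I would apply \cref{proposition4.15} to move from the $(\varphi,\nabla)$-module side to the holonomic arithmetic $D$-module side over the stack $X_k/\varphi_k$. Since that equivalence is constructed by equipping the base equivalence of \cref{proposition4.7} and \cref{proposition4.8} with the additional $\varphi_1$- and $\varphi_k$-pullback isomorphisms, it respects the slope decomposition; hence the subcategory $\{M=M_0\}$ transports to $\overline{h}^{\flat,\mathrm{holo},\varphi,\varphi_k-\mathrm{ur}}_{\mathrm{con},X_k/\varphi_k}$. On the other side, over the scheme $X$ itself, the same \cref{proposition4.7} and \cref{proposition4.8} identify finite free $(\varphi_1,\nabla_1)$-modules over the bounded Robba ring with $\overline{h}^{\flat,\mathrm{holo},\varphi}_{\mathrm{con},X}$. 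Composing these three identifications yields the asserted equivalence of abelian categories, and the derived-category statement follows formally by passing to bounded derived categories of equivalent abelian categories, using that the equivalences are all exact (being induced by tensor operations and pullback along the Frobenius).

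The main obstacle will be checking that the slope-zero condition on the $(\varphi,\nabla)$-side matches exactly the $\varphi_k$-unit-root condition on the arithmetic $D$-module side under the equivalence of \cref{proposition4.15}. Concretely, one must verify that the Kedlaya--Dieudonn\'e--Manin slope decomposition with respect to $\varphi_k$ is preserved by the functor realizing that equivalence; this reduces to the uniqueness of the slope decomposition together with the compatibility of Frobenius and tensor structures on both sides. A secondary technicality, slightly more delicate, is to confirm that the essential surjectivity established in the proof of \cref{proposition5.14} via pullback along $X_k\rightarrow X$ remains essentially surjective after passing through the $D$-module incarnation; this amounts to observing that the pullback functor commutes with the equivalences of \cref{proposition4.7} and \cref{proposition4.8}, which in turn follows because both functors are defined compatibly through the same underlying lift $\mathrm{Spf}(\mathbb{Z}_p[[t]])$.
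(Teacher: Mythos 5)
Your proposal is correct and follows essentially the same route the paper intends: the corollary is deduced from \cref{proposition5.14} via the $(\varphi,\nabla)$-module/arithmetic $D$-module dictionary of \cref{proposition4.7}, \cref{proposition4.8} and \cref{proposition4.15}, with the derived statement obtained formally by passing to bounded derived categories of equivalent abelian categories (note that exactness is automatic for any equivalence of abelian categories, so your parenthetical justification is not even needed).
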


\newpage

\section{$L$-Functions and Product Formula for Perverse $p$-adic Differential Equations over Stacks}

\indent Now we define the corresponding $L$-functions. For basic materials around the corresponding definitions around a curve, we refer closely to the work \cite{AM}.  

\begin{setting}
We work over finite level over $k=\mathbb{F}_{p^r}$, let $X$ be the corresponding curve as in the previous section.	
\end{setting}

\begin{definition} 
We define the corresponding $\varphi_k$-cohomology of any object $(M_k,\varphi,\varphi_1,\varphi_k)$ in $\overline{h}^{\flat,\mathrm{holo},\varphi}_{\mathrm{con},X_k/\varphi_k}$ by considering the corresponding hypercohomology of the following complex of holonomic $D$-modules as the definition of the $\varphi_k$-cohomology:
\[
\xymatrix@R+0pc@C+0pc{
0 \ar[r] \ar[r] \ar[r] &M_k  \ar[r]^{\varphi_k-1} \ar[r] \ar[r] &M_k \ar[r] \ar[r] \ar[r] &0
}
\]

\end{definition}

\begin{definition} 
Let $X$ be just the disc $\mathrm{Spec}(\overline{\mathbb{F}}_p[[t]][1/t])$. We define the corresponding $\varphi_k$-cohomology of any object $(M_k,\varphi,\varphi_1,\varphi_k)$ in $\overline{h}^{\flat,\mathrm{holo},\varphi}_{\mathrm{con},X_k/\varphi_k}$ by considering the corresponding hypercohomology of the following complex of holonomic $D$-modules as the definition of the $\varphi_k$-cohomology:
\[
\xymatrix@R+0pc@C+0pc{
0 \ar[r] \ar[r] \ar[r] &M_k  \ar[r]^{\varphi_k-1} \ar[r] \ar[r] &M_k \ar[r] \ar[r] \ar[r] &0
}
\]

\end{definition}

\indent We denote the corresponding complex in the above $C_{\varphi_k}^\bullet$. Now we consider the corresponding projection map $f_k:X_k\rightarrow X$ in the following development, and we will consider any geometric point $i_x:\overline{x}\rightarrow X$, and we will consider the corresponding structure map $h_k:X_k\rightarrow \mathrm{Spec}k$ with $h:X\rightarrow \mathrm{Spec}\mathbb{F}_p$.  We use the corresponding notation $i^k_x$ to denote the corresponding base change of the morphism $i_x:\overline{x}\rightarrow X$ for any $x\in |X|$.

\begin{definition} 
We define the corresponding $L$-function of any object $(M^\bullet,\varphi,\varphi_1,\varphi_k)$ in $\overline{D}^{\flat,\mathrm{holo},\varphi}_{\mathrm{con},X_k/\varphi_k}$ by using the corresponding object $(M^\bullet,\varphi)$ in $D^{\flat,\mathrm{holo},\varphi}_{\mathrm{con},X_k}$ which is denoted by $L_{X_k/\varphi_k}(M^\bullet,s)$ defined by the following formula:
\begin{align}
L_{X_k/\varphi_k}(M^\bullet,s):=\prod_i\prod_{x\in |X|}\mathrm{det}\left(1-s^{\delta_x}\varphi_1^{\delta_x}|\mathcal{H}^i(i_{x}^+f_{k,+}\mathrm{Tot}(C^\bullet_{\varphi_k}M^\bullet))\right)^{(-1)^{i+1}}. 
\end{align}	
\end{definition}

\indent Now we assume that the object in the previous definition $(M^\bullet,\varphi,\varphi_1,\varphi_k)$ satisfy the corresponding assumption:

\begin{assumption} \label{assumption6.5}
This assumption requires that $M^\bullet$ satisfy the corresponding condition which says that $M^{\bullet,\varphi_k=1}$ lives in $\overline{D}^{\flat,\mathrm{holo},\varphi}_{\mathrm{con},X}$. Certainly the base change of any such object in the latter category over $X$ satisfy the corresponding condition.
\end{assumption}

\indent Under this assumption we can now proceed to relate the corresponding $L$-function we defined to the corresponding $L$-function of the objects over $X$ after considering the corresponding local correspondence we considered in the above after Kedlaya's theorem \cite[Theorem 7.3, Corollary 7.4, Lemma 7.2]{Ked9}.

\begin{proposition}
We have the following equality:
\begin{displaymath}
L_{X_k/\varphi_k}(M^\bullet,s)= \prod_i\mathrm{det}\left(1-s\varphi_1|\mathcal{H}^i((M^\bullet)^{\varphi_k=1})\right)^{(-1)^{i+1}}.
\end{displaymath}
	
\end{proposition}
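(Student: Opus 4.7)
The plan is to reduce the stated equality to two ingredients: first, a descent identification showing that the $\varphi_k$-cone complex $\mathrm{Tot}(C^\bullet_{\varphi_k}M^\bullet)$ computes the derived $\varphi_k$-invariants and descends along $f_k\colon X_k \to X$ to give $(M^\bullet)^{\varphi_k=1}$, which by \cref{assumption6.5} lives in $\overline{D}^{\flat,\mathrm{holo},\varphi}_{\mathrm{con},X}$; and second, the classical Euler product formula for $L$-functions of holonomic $F$-$D^\dagger$-modules over a curve, as developed in \cite{AM} following \'Etesse--Le Stum and Kedlaya \cite[Theorem 7.3, Corollary 7.4, Lemma 7.2]{Ked9}.

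First I would establish the key base-change identity. Since $C^\bullet_{\varphi_k}$ is the two-term cone $[M_k \xrightarrow{\varphi_k - 1} M_k]$, applying $f_{k,+}$ together with the appropriate proper base change along the Cartesian square relating $X_k$ and $X$ through the $\varphi_k$-action, and invoking the Artin--Schreier-type exact sequence coming from the fact that $f_k$ is a Frobenius torsor generated by $\varphi_k$, yields a canonical equivalence
\begin{equation*}
f_{k,+}\mathrm{Tot}(C^\bullet_{\varphi_k}M^\bullet) \simeq (M^\bullet)^{\varphi_k=1}
\end{equation*}
in $\overline{D}^{\flat,\mathrm{holo},\varphi}_{\mathrm{con},X}$. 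Pulling back by $i_x\colon \overline{x}\to X$ for each $x\in |X|$ with $\delta_x = [k(x):\mathbb{F}_p]$ and taking cohomology then produces identifications
\begin{equation*}
\mathcal{H}^i(i_x^+ f_{k,+}\mathrm{Tot}(C^\bullet_{\varphi_k}M^\bullet)) \simeq \mathcal{H}^i(i_x^+(M^\bullet)^{\varphi_k=1}),
\end{equation*}
compatibly with $\varphi_1^{\delta_x}$, which by the usual identification is the geometric Frobenius at $x$ acting on the fiber of the descended object.

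Plugging this into the defining Euler product converts $L_{X_k/\varphi_k}(M^\bullet,s)$ into the standard Euler product attached to $(M^\bullet)^{\varphi_k=1}$ as a holonomic $F$-$D^\dagger$-module over $X$. Invoking the product formula for $L$-functions of such modules (as in \cite{AM}), this Euler product collapses to the alternating determinant of $1 - s\varphi_1$ on the relevant rigid hypercohomology, and one reads off the right-hand side of the stated equality through the cohomology objects $\mathcal{H}^i((M^\bullet)^{\varphi_k=1})$ of the descent.

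The main obstacle will be Step~1: a clean justification that $f_{k,+}$ commutes with the $\varphi_k$-Artin--Schreier complex in a way compatible with Kedlaya's slope decomposition \cite[Theorem 7.3]{Ked9} and with the six-functor formalism for arithmetic $D^\dagger$-modules over the quotient stack $X_k/\varphi_k$. The decisive geometric input is that $\varphi_k$ acts through pullback along the absolute Frobenius of $k = \overline{\mathbb{F}}_p$, which is what makes the Artin--Schreier descent legitimate at the level of arithmetic $D$-modules; the slope-zero part of Kedlaya's decomposition (via \cref{proposition5.14} and \cref{corollary5.15}) is precisely what identifies the $\varphi_k$-fixed piece with a $D^\dagger$-module over $X$ with $\varphi_1$-structure, so the Euler product formalism of \cite{AM} applies without modification.
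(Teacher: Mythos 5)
Your proposal follows essentially the same route as the paper: identify the local factors $\mathcal{H}^i(i_x^+f_{k,+}\mathrm{Tot}(C^\bullet_{\varphi_k}M^\bullet))$ with the $\varphi_k$-invariants $(i_x^+(M^\bullet))^{\varphi_k=1}$ using \cref{assumption6.5} together with the slope-zero correspondence of \cref{proposition5.14} and \cref{corollary5.15}, and then collapse the resulting Euler product over $x\in |X|$ into the alternating determinant on the cohomology of $(M^\bullet)^{\varphi_k=1}$ via the cohomological product formula for $L$-functions of arithmetic $D$-modules as in \cite{AM}. Your more explicit descent/Artin--Schreier justification of $f_{k,+}\mathrm{Tot}(C^\bullet_{\varphi_k}M^\bullet)\simeq (M^\bullet)^{\varphi_k=1}$ is a reasonable elaboration of what the paper handles by direct citation, so the two arguments coincide in substance.
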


\begin{proof}
This is through the following computation. We do have locally the corresponding correspondence on the corresponding local $L$-factors under the \cref{assumption6.5} by \cref{proposition5.14} and \cref{corollary5.15}. Then we compute:
\begin{align}
L_{X_k/\varphi_k}(M^\bullet,s)&:=\prod_i\prod_{x\in |X|}\mathrm{det}\left(1-s^{\delta_x}\varphi_1^{\delta_x}|\mathcal{H}^i(i_{x}^+f_{k,+}\mathrm{Tot}(C^\bullet_{\varphi_k}M^\bullet))\right)^{(-1)^{i+1}}\\
&=	\prod_i\prod_{x\in |X|}\mathrm{det}\left(1-s^{\delta_x}\varphi_1^{\delta_x}|\mathcal{H}^i((i_{x}^+(M^\bullet))^{\varphi_k=1})\right)^{(-1)^{i+1}}\\
&=	\prod_i\mathrm{det}\left(1-s\varphi_1|\mathcal{H}^i((M^\bullet)^{\varphi_k=1})\right)^{(-1)^{i+1}}.
\end{align}

\end{proof}

\newpage
\chapter{$\infty$-Categorical Perverse $p$-adic Differential Equations over $\mathrm{Bun}_G$}

\section{Introduction}

\subsection{Motivation}

The celebrated Weil's conjecture on the Tamagawa number has been tackled in the number field case by Langlands \cite{Lan}, Kottwitz \cite{Ko} and so on. In the function field situation, Lurie and Gaitsgory \cite{GL1} invent a robust tool for studying the product formula in the level of $\infty$-category. We use the following notation to establish the corresponding introduction. First we consider $X$ a smooth proper curve defined over a finite field $\mathbb{F}_p$, and we consider $G$ a smooth group scheme affine over $X$:
\begin{displaymath}
G\rightarrow X.	
\end{displaymath}

We are going to assume that the the generic fiber of the group scheme $G$ is semisimple and simply-connected, and we assume that the fibers of the group scheme $G$ are connected. Then we use $\mathrm{Bundle}_G(X)$ to denote the smooth Artin moduli stacks of $G$-bundles over the the curve $X$, and we will for any $x\in X$  use the notation $\mathrm{Bundle}_G(\{x\})$ to denote the classifying stack of $G$-bundles at the point $x$. Then we use the notation $K_X$ to denote the function field of the curve, and for each point $x\in X$ we use the notation $K_{X,x}$ to denote the corresponding completion of the field $K_X$ at the point $x\in X$, with the corresponding ring of integer $\mathcal{O}_{X,x}$, and the corresponding residue field $\kappa(x)$. In this situation we have that the following Weil's conjecture for the global function field $K_X$ defined above:

\begin{conjecture}
In the notations above, we have the following well-defined equality:
\begin{displaymath}
\frac{|\mathrm{Bundle}_G(X)(\mathbb{F}_p)|}{q^{\mathrm{dim}\mathrm{Bundle}_G(X)}}=\prod_{x\in X} \frac{|\mathrm{Bundle}_G(\{x\})(\kappa(x))|}{q^{\mathrm{dim}\mathrm{Bundle}_G(\kappa(x))}}.	
\end{displaymath}
	
\end{conjecture}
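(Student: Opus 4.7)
The plan is to prove this as an $\infty$-categorical $p$-adic analogue of the Gaitsgory--Lurie cohomological product formula, using the machinery of holonomic arithmetic $D^\dagger$-modules on $\mathrm{Bundle}_G(X)$ developed in the preceding sections together with the $L$-function formalism from the end of Chapter~1. First I would apply the Grothendieck--Lefschetz trace formula in rigid cohomology (equivalently, in Abe's derived category of holonomic $F$-$D^\dagger$-modules on stacks) to rewrite the left-hand side as a $\varphi$-trace:
\begin{align}
\frac{|\mathrm{Bundle}_G(X)(\mathbb{F}_p)|}{q^{\mathrm{dim}\mathrm{Bundle}_G(X)}} = \mathrm{tr}\!\left(\varphi \mid C^*_{\mathrm{rig}}(\mathrm{Bundle}_G(X))\right),
\end{align}
renormalized by the Tate twist corresponding to $\mathrm{dim}\,\mathrm{Bundle}_G(X)$, and similarly express each local factor on the right as the Frobenius trace on $C^*_{\mathrm{rig}}(\mathrm{Bundle}_G(\{x\}))$. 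This transports the problem from a point count to a statement about the rigid Frobenius cohomology of a smooth Artin stack.

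Next, following Gaitsgory--Lurie but carried out in the $F$-isocrystal and arithmetic $D^\dagger$-module setting, I would establish a $p$-adic non-abelian Poincar\'e duality identifying $\mathrm{Bundle}_G(X)\simeq \mathrm{Maps}(X,BG)$ and then read $C^*_{\mathrm{rig}}(\mathrm{Bundle}_G(X))$ as the chiral/factorization homology on the Ran space of $X$ of the factorization algebra attached to $BG$. The restriction of that factorization algebra to a single point $x\in X$ recovers exactly $C^*_{\mathrm{rig}}(\mathrm{Bundle}_G(\{x\}))$. To make this precise in the present framework, I would couple Abe's $\infty$-categorical holonomic $D^\dagger$-module theory with the Ran-space factorization formalism, using the local correspondence of the previous section (in particular \cref{proposition5.14} and \cref{corollary5.15}) to match the local $(\varphi_1,\nabla_1)$-factors with the stalks of the factorization algebra. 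The Drinfeld-lemma style $\Phi$-equivariant machinery from the first chapter is precisely what is needed to endow these local stalks with the correct partial Frobenius structure, so that the factorization homology carries a natural global Frobenius action compatible with the trace on the left.

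With this dictionary in place, the product formula reduces to a ``factorization cohomology equals continuous restricted tensor product'' statement whose Frobenius trace is a regularized Euler product. Concretely the output is
\begin{align}
\mathrm{tr}\!\left(\varphi \mid C^*_{\mathrm{rig}}(\mathrm{Bundle}_G(X))\right) = \prod_{x\in X}\mathrm{tr}\!\left(\varphi \mid C^*_{\mathrm{rig}}(\mathrm{Bundle}_G(\{x\}))\right),
\end{align}
which, after the Tate-twist identifications, is exactly the desired Tamagawa formula and is the $p$-adic shadow of the Atiyah--Bott formula; the $L$-function product computation of the preceding section supplies the local-to-global compatibility for each Euler factor.

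The main obstacle is two-fold. First, one must justify the convergence of the infinite Euler product in rigid cohomology and its equality with the Tamagawa expression: this is the $p$-adic analogue of Harder's vanishing of $H^1$ for the adjoint bundle, and requires tight control of the Frobenius slope filtration on $\mathrm{Bundle}_G$; Kedlaya's slope filtration machinery together with the stability results for $(\varphi,\nabla)$-modules supply the natural ingredients. Second, extending Abe's formalism of holonomic arithmetic $D^\dagger$-modules to Ran-space type colimits of stacks, and proving the K\"unneth / factorization base change needed for the non-abelian Poincar\'e duality, is the core technical hurdle; the $\Phi$-equivariant and partial-Frobenius formalism from the first chapter is intended to be exactly the bridge that makes this Ran-space construction behave well in the $p$-adic setting.
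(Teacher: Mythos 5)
Your proposal does not match what the paper actually does with this statement, and as written it has genuine gaps. The statement itself is a pure point-count identity (Weil's Tamagawa number conjecture for the function field $K_X$), and the paper does not prove it at all: it records it as a conjecture and immediately observes that it is a theorem of Gaitsgory--Lurie, proved by the $\ell$-adic Grothendieck--Lefschetz trace formula in \cite{GL1}; every $p$-adic statement the paper actually proves (the two propositions at the end of the $\mathrm{Bun}_G$ section) is obtained not by redoing the Gaitsgory--Lurie machinery $p$-adically but by reducing to the $\ell$-adic case through Deligne's petits camarades conjecture as established in \cite{Ked11}, \cite{Ked12}, i.e.\ by $\ell$-independence of Frobenius traces for companions. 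Since the equality to be proved is independent of any choice of cohomology theory, the short correct argument is simply to invoke \cite[Theorem 1.4.4.1]{GL1} (or, if one insists on routing through rigid cohomology, to invoke companions and then \cite{GL1}); your detour through a $p$-adic trace formula is not needed for the statement as posed.

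The deeper problem is that the route you sketch is a program, not a proof, and its load-bearing steps are exactly the points the paper leaves as conjectures. A $p$-adic non-abelian Poincar\'e duality identifying $C^*_{\mathrm{rig}}(\mathrm{Bundle}_G(X))$ with factorization (Ran-space) homology of the classifying prestack, a K\"unneth/factorization base-change theorem for holonomic arithmetic $D^\dagger$-modules over Ran-type colimits of stacks, and the convergence of the resulting Euler product (the $p$-adic analogue of the Harder-type vanishing and the Atiyah--Bott/Gaitsgory--Lurie convergence analysis) are nowhere available in the literature you can cite, and the paper itself only formulates the corresponding trace identity and cotangent-fibre finiteness as conjectures. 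Moreover, \cref{proposition5.14} and \cref{corollary5.15} concern the local punctured-disc situation with a partial Frobenius over $\overline{\mathbb{F}}_p$ and give an equivalence of categories of $(\varphi_1,\nabla_1)$-objects; they do not provide the local-to-global factorization matching of stalks over the Ran space that your argument requires, and the $\Phi$-equivariant formalism of the first chapter is likewise not a substitute for the missing base-change and descent theorems. So either cite the $\ell$-adic theorem (possibly via companions) to close the argument, or acknowledge that each of the three steps above must first be established as theorems before your outline constitutes a proof.
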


This could be regarded as sort of local and global compatibility or a product formula. This is now a theorem due to Lurie and Gaitsgory \cite{GL1} by using $\ell$-adic Grothendieck-Lefschetz trace formulas where $\ell$ is different from the prime $p$. Actually to be more precise they proved the following formula:

\begin{theorem}\mbox{\bf (Lurie-Gaitsgory \cite[Theorem 1.4.4.1]{GL1})}\\
\begin{displaymath}
\mathrm{Tr}(\varphi^{-1}|\mathrm{H}^*(\mathrm{Bundle}_G(X\times_{\mathrm{Spec}\mathbb{F}_p} \mathrm{Spec}\overline{\mathbb{F}}_p),\mathbb{Q}_\ell))=\prod_{x\in X} \frac{|\mathrm{Bundle}_G(\{x\})(\kappa(x))|}{q^{\mathrm{dim}\mathrm{Bundle}_G(\kappa(x))}}.	
\end{displaymath}	
\end{theorem}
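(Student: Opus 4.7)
The plan is to prove this identity by combining Behrend's Grothendieck-Lefschetz trace formula for smooth Artin stacks of finite type over $\mathbb{F}_p$ with a factorization product formula at the level of $\ell$-adic cohomology. The two sides live on opposite banks of a Lefschetz-type bridge: the left-hand side is already cohomological, while the right-hand side is an infinite product of groupoid cardinalities of the local classifying stacks $\mathrm{Bundle}_G(\{x\})$ over the residue fields $\kappa(x)$. My first move is to apply Behrend's trace formula locally at each closed point to rewrite the right-hand side as a product over $x \in |X|$ of local traces of Frobenius acting on $H^*(\mathrm{Bundle}_G(\{x\})_{\overline{\kappa(x)}}, \mathbb{Q}_\ell)$. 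This reduces the theorem to a purely cohomological assertion: that $H^*(\mathrm{Bundle}_G(X)_{\overline{\mathbb{F}}_p}, \mathbb{Q}_\ell)$, viewed as a Frobenius module, is the continuous restricted tensor product of the local cohomologies indexed by the closed points of $X$.

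To establish this cohomological product formula I would work on the Beilinson-Drinfeld Grassmannian $\mathrm{Gr}_{G,X^I}$ and the Ran space $\mathrm{Ran}(X)$, both of which carry a natural factorization structure compatible with disjoint union of divisors on $X$. The key geometric input is the uniformization morphism $\mathrm{Gr}_{G,\mathrm{Ran}(X)} \to \mathrm{Bundle}_G(X)$, whose surjectivity is guaranteed by the hypothesis that the generic fiber of $G$ is semisimple and simply-connected. I would then push forward the constant sheaf along this uniformization and show that the cohomology of $\mathrm{Bundle}_G(X)$ inherits the factorization algebra structure carried by the Ran-Grassmannian, thereby producing the expected continuous tensor decomposition over the closed points of $X$. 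A convergence argument, using a Lang-Steinberg style estimate ensuring that each local factor tends to $1$ as $\deg(x) \to \infty$, is needed to make sense of the infinite product on both sides.

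The main obstacle, as in Gaitsgory-Lurie \cite{GL1}, is proving that the uniformization map is \emph{cohomologically contractible}, i.e.\ its geometric fibers contribute trivially to $\ell$-adic cohomology with constant coefficients. This is the nonabelian Poincar\'e duality step, where the hypotheses on $G$ (connected fibers, semisimple simply-connected generic fiber) become essential in order to rule out obstructions coming from $\pi_0$ or $\pi_1$ of the structure group. Once this contractibility is established, the global factorization of cohomology follows formally from the factorization structure on the Ran-Grassmannian together with the identification of the stalk of the pushforward at a point $x$ with $H^*(\mathrm{Bundle}_G(\{x\})_{\overline{\kappa(x)}}, \mathbb{Q}_\ell)$. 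Combining this cohomological identity with Behrend's trace formula applied separately at each closed point, and taking Frobenius traces termwise in the convergent infinite tensor product, yields the stated equality between $\mathrm{Tr}(\varphi^{-1}\mid H^*)$ and the product of local normalized point counts.
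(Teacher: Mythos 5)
The paper offers no proof of this statement at all: it is imported verbatim as Theorem 1.4.4.1 of Gaitsgory--Lurie \cite{GL1}, and your sketch is a faithful outline of exactly the argument carried out there (local Behrend/Steinberg trace identities at each closed point, uniformization of $\mathrm{Bundle}_G$ via the Ran--Grassmannian using that the generic fiber is semisimple and simply connected, nonabelian Poincar\'e duality for the acyclicity of the fibers, and the convergence/summability analysis needed to take Frobenius traces termwise in the continuous tensor product). So your route coincides with the proof in the cited source rather than with anything argued in this paper, and the only caveat is that each step you name is a chapter-length argument in \cite{GL1} rather than a formal consequence of the preceding ones.
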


Actually by simple observation on the '$\ell$-independence', for instance see \cite[Section 3.2]{Abe1}, one has that actually the following corollary, where the $p$-adic cohomology theory is based on \cite{Abe1}:

\begin{corollary}
\begin{displaymath}
\mathrm{Tr}(\varphi^{-1}|\mathrm{H}^*(\mathrm{Bundle}_G(X\times_{\mathrm{Spec}\mathbb{F}_p} \mathrm{Spec}\overline{\mathbb{F}}_p),\mathbf{Dual}_p))=\prod_{x\in X} \frac{|\mathrm{Bundle}_G(\{x\})(\kappa(x))|}{q^{\mathrm{dim}\mathrm{Bundle}_G(\kappa(x))}}.	
\end{displaymath}	
\end{corollary}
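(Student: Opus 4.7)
The plan is to deduce the $p$-adic product formula directly from the Lurie-Gaitsgory $\ell$-adic theorem by invoking the $\ell$-independence principle for the trace of Frobenius acting on suitable cohomology of $\mathrm{Bundle}_G(X_{\overline{\mathbb{F}}_p})$. Since the right-hand side of the identity is manifestly an integer (a product of quotients of counts of $\mathbb{F}_p$-points by powers of $q$), it does not depend on the cohomology theory used to compute the left-hand side; therefore the problem reduces to showing that replacing $\ell$-adic sheaves with the appropriate object $\mathbf{Dual}_p$ in Abe's rigid $p$-adic cohomology preserves the global trace of $\varphi^{-1}$.

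First I would recall Abe's construction of the $p$-adic six-functor formalism on Artin stacks locally of finite type over $\mathbb{F}_p$, so that $\mathrm{H}^*(\mathrm{Bundle}_G(X_{\overline{\mathbb{F}}_p}),\mathbf{Dual}_p)$ is well-defined and carries a canonical Frobenius; this uses the descent constructions already invoked in Chapter 1 to define arithmetic $D$-modules over stacks. Next I would invoke the $\ell$-independence statement of \cite[Section 3.2]{Abe1}, which asserts that for any object in the compatible family of cohomology theories indexed by $\ell$ (including the rigid $p$-adic one), the trace of a power of Frobenius acting on cohomology is given by the same $\mathbb{Z}$-valued expression. Applied to the constant coefficient (or dualizing) sheaf on $\mathrm{Bundle}_G(X_{\overline{\mathbb{F}}_p})$, this yields
\begin{displaymath}
\mathrm{Tr}(\varphi^{-1}\,|\,\mathrm{H}^*(\mathrm{Bundle}_G(X_{\overline{\mathbb{F}}_p}),\mathbf{Dual}_p))=\mathrm{Tr}(\varphi^{-1}\,|\,\mathrm{H}^*(\mathrm{Bundle}_G(X_{\overline{\mathbb{F}}_p}),\mathbb{Q}_\ell)).
\end{displaymath}
Finally I would combine this equality with the Lurie-Gaitsgory theorem \cite[Theorem 1.4.4.1]{GL1} cited just above in the excerpt, which identifies the right-hand side with the desired Euler product, yielding the claim.

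The hard part will be ensuring that the $\ell$-independence invocation is legitimate on a non-quasi-compact Artin stack such as $\mathrm{Bundle}_G(X)$. In the $\ell$-adic setting Lurie and Gaitsgory had to expend substantial effort to define the relevant Frobenius trace as a convergent infinite sum, using the stratification of $\mathrm{Bundle}_G(X)$ by quasi-compact open substacks and carefully controlling contributions from unstable strata; porting this to the $p$-adic setting requires checking that Abe's arithmetic $D$-module cohomology admits the same stratification-compatible behavior, that the individual finite-type pieces satisfy $\ell$-independence in the classical form \cite[Section 3.2]{Abe1}, and that the resulting infinite sum converges in the $p$-adic sense. Modulo these technical checks, which are essentially parallel to those done in the $\ell$-adic case, the statement then follows as a formal consequence.
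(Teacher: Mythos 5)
Your route is exactly the paper's: the corollary is obtained by combining the Lurie--Gaitsgory $\ell$-adic theorem \cite[Theorem 1.4.4.1]{GL1} with the $\ell$-independence observation of \cite[Section 3.2]{Abe1}, with Abe's formalism supplying the $p$-adic cohomology of the stack. Your additional caution about the non-quasi-compactness of $\mathrm{Bundle}_G(X)$ and convergence of the trace is a reasonable elaboration of the same argument rather than a different approach.
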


Note that by directly using the cohomological language we have\footnote{$\mathbf{Dual}_p$ is the $p$-adic dualizing object as in \cite[Section 1.1.4, and above 2.2.23]{Abe1}.} :

\begin{corollary}
\begin{align}
\mathrm{Tr}(\varphi^{-1}|\mathrm{H}^*(\mathrm{Bundle}_G(&X\times_{\mathrm{Spec}\mathbb{F}_p} \mathrm{Spec}\overline{\mathbb{F}}_p),\mathbf{Dual}_p))\\
&=\prod_{x\in X} \mathrm{Tr}(\varphi^{-1}|\mathrm{H}^*(\mathrm{Bundle}_G(\{x\}\times_{\mathrm{Spec}\mathbb{F}_p)} \mathrm{Spec}\overline{\mathbb{F}}_p),\mathbf{Dual}_p)).
\end{align}
\end{corollary}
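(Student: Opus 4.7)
The plan is to derive this corollary directly from the preceding one by recognizing each local factor in the Euler product as itself a local Grothendieck-Lefschetz trace on $p$-adic cohomology. The global identity with local point-count factors has already been established; all that remains is to identify, for each closed point $x \in X$,
\[
\frac{|\mathrm{Bundle}_G(\{x\})(\kappa(x))|}{q^{\dim \mathrm{Bundle}_G(\kappa(x))}} \;=\; \mathrm{Tr}\bigl(\varphi^{-1} \bigm| \mathrm{H}^*(\mathrm{Bundle}_G(\{x\}) \times_{\mathrm{Spec}\mathbb{F}_p} \mathrm{Spec}\overline{\mathbb{F}}_p, \mathbf{Dual}_p)\bigr),
\]
and then substitute this factor-by-factor into the Euler product on the right-hand side of the preceding corollary.

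First, I would apply Abe's $p$-adic Grothendieck-Lefschetz trace formula to $BG_x := \mathrm{Bundle}_G(\{x\})$ regarded as a classifying stack over $\kappa(x)$. This produces the pointwise identity relating $|BG_x(\kappa(x))|/|\kappa(x)|^{\dim BG_x}$ with the trace of the geometric Frobenius $\varphi_x$ over $\kappa(x)$ on $\mathrm{H}^*(BG_x \times_{\mathrm{Spec}\kappa(x)} \mathrm{Spec}\overline{\mathbb{F}}_p, \mathbf{Dual}_p)$, which is the $p$-adic analogue of the $\mathbb{Q}_\ell$-coefficient statement underlying Lurie-Gaitsgory. Next, I would pass from $\kappa(x)$ down to $\mathbb{F}_p$ by the standard cyclic-permutation argument: the base change of $BG_x$ viewed as an $\mathbb{F}_p$-stack to $\overline{\mathbb{F}}_p$ decomposes into $\delta_x = [\kappa(x) : \mathbb{F}_p]$ copies that are cyclically permuted by $\varphi^{-1}$, so the trace of $\varphi^{-1}$ on the cohomology of the disjoint union equals the trace of $\varphi_x^{-1} = \varphi^{-\delta_x}$ on the cohomology of a single copy, while the normalization is matched via $|\kappa(x)|^{\dim BG_x} = q^{\delta_x \dim BG_x}$ in the denominator.

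Substituting this local identity into each factor on the right-hand side of the preceding corollary then yields the statement. The main obstacle is not the formal trace manipulations, which are routine, but rather securing Abe's $p$-adic Grothendieck-Lefschetz formula in sufficient generality to apply to the non-Deligne-Mumford classifying stack $BG_x$; the essential inputs are proper pushforward, Verdier duality, and cohomological descent for quotients by smooth connected affine groups, all of which are supplied by \cite{Abe} together with the $p$-adic/$\ell$-adic comparison of traces that is implicit in \cite[Section 3.2]{Abe1}. Once these foundational pieces are granted, the corollary follows by direct substitution into the preceding corollary.
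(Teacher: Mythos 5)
Your proposal follows essentially the same route as the paper: the paper deduces this corollary from the preceding one simply "by directly using the cohomological language," i.e.\ by rewriting each local factor $|\mathrm{Bundle}_G(\{x\})(\kappa(x))|/q^{\dim}$ as a Frobenius trace on the $p$-adic cohomology of the local classifying stack and substituting factor-by-factor, which is exactly your argument. Your additional details (Abe's $p$-adic Grothendieck--Lefschetz formula for the classifying stack, the cyclic-permutation/base-change bookkeeping from $\kappa(x)$ to $\mathbb{F}_p$, and the caveat about non-Deligne--Mumford stacks) only make explicit what the paper leaves implicit, so the approach matches.
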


From this we have many interesting results, first the left hand is of finite type, while the right hand side is also well-defined. These are not obvious a priori at all. So one has very good understanding on the $p$-adic cohomology of $\mathrm{Bundle}_G(X)$ which is smooth but not quasi-compact. Our idea is to study the cohomology with non-trivial coefficients in the category of holonomic arithmetic $\mathcal{D}$-modules, for instance we use the notation $\mathcal{E}\in D^\mathrm{b}_{\mathrm{hol}}(X)$ to denote such an module in the $p$-adic setting. Moreover we have to assume $\mathcal{E}$ is augmented $\mathbb{E}_\infty$-ring\footnote{In fact, the sheaves of rings of differential operators whatever the forms are actually unfortunately noncommutative, therefore maybe one should really focus on the corresponding larger categories of the corresponding $A_\infty$ noncommutative algebras with the augmentation. In some sense, this is also inspired by \cite{BS}, namely the prismatic $E_\infty$-rings.}. Our main conjecture is that we have the following well defined statement:

\begin{conjecture}
\begin{align}
\mathrm{Tr}(\varphi^{-1}|\mathrm{H}^*(\mathrm{Bundle}_G(&X\times_{\mathrm{Spec}\mathbb{F}_p} \mathrm{Spec}\overline{\mathbb{F}}_p),h_{\mathrm{Bundle}_G}^*\mathcal{E}))\\
&=\prod_{x\in X} \mathrm{Tr}(\varphi^{-1}|\mathrm{H}^*(\mathrm{Bundle}_G(\{x\}\times_{\mathrm{Spec}\mathbb{F}_p)} \mathrm{Spec}\overline{\mathbb{F}}_p),h_x^*{\mathcal{E}}_x)).	
\end{align}
\end{conjecture}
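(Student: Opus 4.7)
The plan is to transplant the $\infty$-categorical strategy of Gaitsgory-Lurie from the $\ell$-adic \'etale setting to the framework of arithmetic holonomic $\mathcal{D}$-modules after Abe, while carrying along the augmented $\mathbb{E}_\infty$-coefficient $\mathcal{E}$ (in fact $A_\infty$, since the relevant sheaves of differential operators are noncommutative). The first step is to equip the pullback $h_{\mathrm{Bundle}_G}^*\mathcal{E}$ with a factorization structure over the Ran space $\mathrm{Ran}(X)$. Concretely, I would use the Beauville-Laszlo style uniformization of $\mathrm{Bundle}_G$ as a colimit indexed by finite subsets of $X$ of local contributions given by affine Grassmannians; the augmentation of $\mathcal{E}$ provides the compatibilities required to promote the pullbacks to a factorization $A_\infty$-algebra, whose chiral/factorization homology over $\mathrm{Ran}(X)$ computes the left hand side of the conjecture.

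Next, I would combine the Grothendieck-Lefschetz-Abe trace formula for holonomic arithmetic $\mathcal{D}$-modules with the $L$-function machinery of the preceding section. At each closed point $x\in X$ the local Euler factor $\mathrm{Tr}(\varphi^{-1}|\mathrm{H}^*(\mathrm{Bundle}_G(\{x\}_k), h_x^*\mathcal{E}_x))$ is interpreted, via the equivalence of \cref{corollary5.15} applied to the formal completion at $x$, as the $\varphi_1$-trace of an $(\varphi,\varphi_k)$-equivariant complex over the formal punctured disc lifting. Then \cref{proposition5.14} together with the product formula for $L_{X_k/\varphi_k}$ converts the Ran-space factorization-homology computation of the first paragraph into the desired Euler product on the right hand side. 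The coefficient $\mathcal{E}$ enters only as a compatible module action on each local factor, so the combinatorics is essentially the same as in the trivial-coefficient case of Gaitsgory-Lurie.

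Finally, to match the two expressions literally one can invoke $\ell$-independence after Abe: comparing the $p$-adic formula to the $\ell$-adic theorem of Gaitsgory-Lurie through a motivic lift absorbs the coefficient $\mathcal{E}$ into the factorization-algebra structure and reduces the statement to the original Weil conjecture identity. Along the way one needs a $p$-adic Atiyah-Bott style computation expressing the cohomology of $\mathrm{Bundle}_G$ with arithmetic $\mathcal{D}$-module coefficients via a Koszul-type resolution coming from the tangent complex, which is itself a factorization module over the chiral algebra built in the first step.

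The main obstacle is twofold. First, the noncommutativity of the sheaves of arithmetic differential operators forces one out of the $\mathbb{E}_\infty$-factorization algebra formalism of Lurie into an $A_\infty$ (or $\mathbb{E}_1$) analogue; existence of factorization homology, compatibility with $!$-pullback and $*$-pushforward, and the various commutations of limits and colimits required for the trace computation all have to be reestablished in this more rigid setting. Second, and more seriously, $\mathrm{Bundle}_G$ is smooth but not quasi-compact, so well-definedness of the trace on the left and convergence of the Euler product on the right require a $p$-adic analogue of the Harder-Narasimhan truncation together with a contractibility-of-rational-sections argument (Drinfeld-Simpson-Gaitsgory). Establishing the requisite uniform cohomological dimension and vanishing bounds for arithmetic $\mathcal{D}$-modules on these truncations, so that the Ran-space integral and the global trace land in the same finite-dimensional answer, is where I expect the bulk of the technical work to lie.
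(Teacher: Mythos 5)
The statement you are trying to prove is recorded in the paper as a \emph{conjecture}, and the paper offers no proof of it; the only case it establishes (as a proposition, later in the chapter) is the one where the coefficient $\mathcal{E}$ is an isocrystal $E$, and there the entire argument is to invoke Deligne's petits camarades conjecture as proved by Kedlaya (\'etale and crystalline companions) and thereby reduce to the $\ell$-adic theorem of Gaitsgory--Lurie. Your proposal does not close the general case either, and the decisive gap is in your final paragraph: the appeal to ``$\ell$-independence after Abe'' through ``a motivic lift'' is only available for coefficients admitting companions, i.e.\ (overconvergent $F$-)isocrystals or lisse objects. For a general augmented $\mathbb{E}_\infty$- (or $A_\infty$-) algebra object of $\mathbb{D}_{\mathrm{hol}}(X)$ there is no companion theory and no motivic lift, so the comparison with the $\ell$-adic Gaitsgory--Lurie identity is precisely the open point, not a final bookkeeping step; in effect your third paragraph silently specializes to the isocrystal case the paper already treats separately.

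Two further steps would not go through as written. First, your use of \cref{proposition5.14} and \cref{corollary5.15} to ``interpret'' the local factors is a mismatch: those results concern $(\varphi_1,\nabla_1)$-modules over the punctured formal disc and descent along a partial Frobenius (a local Drinfeld lemma for $F$-isocrystals); they say nothing about the cohomology of the classifying stacks $\mathrm{Bundle}_G(\{x\})$ with coefficients $h_x^*\mathcal{E}_x$, nor does the $L$-function product formula of the first chapter (which is for the quotient stack $X_k/\varphi_k$ under \cref{assumption6.5}) convert a Ran-space computation into the Euler product you need. Second, the inputs you yourself flag as obstacles --- a factorization-homology formalism compatible with Abe's six operations in the noncommutative $A_\infty$ setting, nonabelian Poincar\'e duality and the contractibility of the space of rational sections in the arithmetic $\mathcal{D}$-module context, a $p$-adic Atiyah--Bott computation, and finiteness/convergence over the non-quasi-compact $\mathrm{Bun}_G$ --- are exactly the content of the conjecture, so listing them as future technical work leaves the proposal a program rather than a proof. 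What you have is a reasonable transcription of the Gaitsgory--Lurie strategy, which is also the strategy the paper announces, but neither you nor the paper supplies the missing $p$-adic factorization and trace-formula machinery for general coefficients.
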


Note that the cohomology groups involved for the the stack $\mathrm{Bundle}_G(\overline{X})$ is essentially nonzero in infinite degrees as in the $\ell$-adic setting in \cite{GL1} but once we consider the cotangent complex we could have more detailed description on that:

\begin{conjecture}
\begin{displaymath}
\mathrm{H}^{*}(\mathrm{cot}\mathrm{C}^*(\mathrm{Bundle}_G(X\times_{\mathrm{Spec}\mathbb{F}_p} \mathrm{Spec}\overline{\mathbb{F}}_p),h_{\mathrm{Bundle}_G}^*\mathcal{E}))\end{displaymath}
is finite dimensional over $\mathbb{Q}_p$.
\end{conjecture}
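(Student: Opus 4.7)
The plan is to reduce the problem on the non-quasi-compact stack $\mathrm{Bundle}_G(\overline{X})$ to a finite union of quasi-compact pieces where Abe's finiteness theorem for the six-functor formalism of arithmetic $\mathcal{D}$-modules applies. First I would invoke the Harder-Narasimhan stratification, writing $\mathrm{Bundle}_G(\overline{X}) = \varinjlim_d \mathrm{Bundle}_G^{\leq d}$ as a countable increasing union of quasi-compact open substacks, following Drinfeld-Simpson and the treatment in \cite{GL1}. On each quasi-compact piece $\mathrm{Bundle}_G^{\leq d}$, Abe's formalism from \cite{Abe} applied to the pullback $h^*\mathcal{E}$ produces a bounded complex of holonomic $\mathcal{D}$-modules whose cohomology is finite-dimensional over $\mathbb{Q}_p$.

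The key input specific to this statement is the identification of the cotangent complex $\mathbb{L}_{\mathrm{Bundle}_G(\overline{X})}$ whose fiber at a $G$-bundle $P$ is $\mathrm{R}\Gamma(\overline{X},\mathrm{ad}(P)^\vee)[1]$, a perfect complex of Tor-amplitude $[0,1]$. The cotangent factor in $\mathrm{cot}\mathrm{C}^*$ should therefore restrict the effective range of cohomological degrees contributing in any fixed range to only finitely many strata. Combining with the positivity of the conormal bundles of the HN boundary strata $\mathrm{Bundle}_G^{\leq d+1}\setminus\mathrm{Bundle}_G^{\leq d}$, the transition maps in the filtered colimit
\begin{equation*}
\varinjlim_d \mathrm{H}^{*}(\mathrm{cot}\mathrm{C}^*(\mathrm{Bundle}_G^{\leq d},h_{\mathrm{Bundle}_G}^*\mathcal{E}))
\end{equation*}
stabilize degree by degree. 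This parallels the stabilization argument Gaitsgory-Lurie carry out in \cite{GL1} for the trace of Frobenius on the structure sheaf, but the cotangent factor should make the stabilization stronger and directly yield finite dimensionality.

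The main obstacle I foresee is that $\mathcal{E}$ is only assumed holonomic, not necessarily tame at the boundary strata of the HN filtration, so pushforward-pullback along the open immersions $\mathrm{Bundle}_G^{\leq d}\hookrightarrow\mathrm{Bundle}_G^{\leq d+1}$ may introduce irregular contributions that spoil the stabilization. The strategy to handle this is to first reduce to the overconvergent case via the $\infty$-categorical arithmetic $\mathcal{D}^\dagger$-module machinery of the preceding chapter, and then invoke Kedlaya's semistable reduction results \cite{Ked5}, \cite{Ked6}, \cite{Ked7} to resolve the irregular behavior along each boundary divisor by a finite cover. Once tameness is achieved, the Harder-Narasimhan positivity argument together with Abe's finiteness on each quasi-compact stratum completes the proof.
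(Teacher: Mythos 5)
Your proposal misreads the statement in a way that derails the whole strategy. In the conjecture, $\mathrm{cot}\,\mathrm{C}^*(\mathrm{Bundle}_G(\overline{X}),h_{\mathrm{Bundle}_G}^*\mathcal{E})$ does not mean cohomology of $h^*\mathcal{E}$ twisted by the geometric cotangent complex $\mathbb{L}_{\mathrm{Bundle}_G}$; it is the cotangent fibre (the Andr\'e--Quillen/indecomposables construction) applied to the augmented $\mathbb{E}_\infty$-algebra of cochains $\mathrm{C}^*(\mathrm{Bundle}_G(\overline{X}),h^*\mathcal{E})$ --- which is exactly why the paper insists that $\mathcal{E}$ be an augmented commutative $\mathbb{E}_\infty$-algebra, a hypothesis your argument never uses. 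This distinction is the entire content of the statement: as the paper itself notes, $\mathrm{H}^*(\mathrm{Bundle}_G(\overline{X}),h^*\mathcal{E})$ is nonzero in infinitely many degrees, so an exhaustion by Harder--Narasimhan-bounded quasi-compact open substacks together with Abe's finiteness on each piece and a stabilization argument can at best give degreewise finiteness of the un-cotangented cohomology, which is a different and much weaker assertion; the total cohomology stays infinite dimensional, and it is only the passage to the cotangent fibre of the cochain algebra that cuts it down to something finite. Interpreting the ``cotangent factor'' as $\mathrm{R}\Gamma(\overline{X},\mathrm{ad}(P)^\vee)[1]$ and arguing that it ``restricts the effective range of cohomological degrees'' has no bearing on that operation.

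The paper's own route (and note the statement is posed there as a conjecture, not proved) is the Gaitsgory--Lurie mechanism: establish a quasi-isomorphism $\mathrm{cot}\,\mathrm{C}^*(\mathrm{Bundle}_G(\overline{X}),h_{\mathrm{Bundle}_G}^*\mathcal{E})\simeq \mathrm{C}^*(\overline{X},\mathcal{M}(G)^{\mathrm{Gross}}_{\mathcal{E}})$ identifying the cotangent fibre with cochains on the curve with coefficients in the rigid Gross $G$-motive, after which finiteness follows from finiteness of cohomology of holonomic arithmetic $\mathcal{D}$-modules on the proper curve $\overline{X}$; the paper explicitly flags this quasi-isomorphism as ``essentially the main thing to prove.'' Your proposal never engages with this identification, so even granting every step it would not yield the conjectured finiteness. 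As a secondary point, the appeal to Kedlaya's semistable reduction to tame ``irregular contributions'' along the HN boundary is misplaced: Abe's finiteness for holonomic modules on quasi-compact admissible stacks requires no tameness hypothesis, and the genuine difficulty of the statement lies in the identification above, not in wild ramification along strata.
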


This is essentially true since we will establish the following quasi-isomorphism:
\begin{displaymath}
\mathrm{cot}\mathrm{C}^*(\mathrm{Bundle}_G(X\times_{\mathrm{Spec}\mathbb{F}_p} \mathrm{Spec}\overline{\mathbb{F}}_p),h_{\mathrm{Bundle}_G}^*\mathcal{E})\overset{\mathrm{quasi}}{\longrightarrow} \mathrm{C}^*((X\times_{\mathrm{Spec}\mathbb{F}_p} \mathrm{Spec}\overline{\mathbb{F}}_p),\mathcal{M}(G)^\mathrm{Gross}_\mathcal{E})
\end{displaymath}
where the latter is the refined rigid Gross $G$-motive with more general coefficients, which is a $p$-adic analog of Gross $G$-motive in the $\ell$-adic setting defined in \cite{Gro1}. However, this quasi-isomorphism is not obvious at all, since actually this is essentially the main thing to prove. A little bit weaker result will be the following Euler product formula for the L-functions attached to the rigid Gross $G$-motive (which we will define in our setting which is an $\infty$-$p$-adic arithmetic $\mathcal{D}$-module) which we define in the following:

\begin{definition}
We define the rigid $L$-function attached to rigid Gross $G$-motive (in general $p$-adic coefficient) as the following:
\begin{displaymath}
L_{\mathcal{M}(G),\mathrm{Frob}^-1,\mathcal{E}}(t):=\mathrm{det}(1-t\mathrm{Frob}^{-1}|\mathrm{H}^*(\overline{X},\mathcal{M}(G)_\mathcal{E})^{-1}
\end{displaymath}
\end{definition}

Then we will show that we have the following Euler product formula for this $L$-function as in the following:

\begin{align}
L_{\mathcal{M}(G),\mathrm{Frob}^-1,\mathcal{E}}(t)&:=\mathrm{det}(1-t\mathrm{Frob}^{-1}|\mathrm{H}^*(\overline{X},\mathcal{M}(G)_\mathcal{E})^{-1}\\
&=\prod_{x\in X}\mathrm{det}(1-t\mathrm{Frob}_x^{-1}|\mathrm{H}^*(\overline{X},M(G_x)_{\mathcal{E}_x})^{-1}.
\end{align}

\indent Actually if $\mathcal{E}$ is just the trivial coefficient, then the special value $L_{\mathcal{M}(G),\mathrm{Frob}^-1,\mathcal{E}}(1)$ is actually just the inverse of $\prod_{x\in X}\frac{|\mathrm{Bundle}_G(\{x\})(\kappa(x))|}{q^{\mathrm{dim}\mathrm{Bundle}_G(\kappa(x))}}$.

\newpage
\subsection{Approaches}
The main goal of us is to establish a parallel story by using $p$-adic cohomology studied by many people in the past 20 years. A well-defined at least constructible coefficient systems for at least quasi-projective schemes over $\mathbb{F}_p$ has been the central problem in the literature for a quite long time in the past, for instance people are trying to determine down the correct derived category to consider and the correct six operation to consider. The work of \cite{Ca}, \cite{Abe1} and \cite{AC1} has given some relatively good answer, which led some essential breakthrough from Kedlaya \cite{Ked11}, \cite{Ked12} on Deligne's conjecture on petits camarades \cite{De1}. The answer basically gives a well-defined t-structure in the $p$-adic setting, as well as a derived category of holonomic arithmetic $\mathcal{D}$-modules where we do have well-defined six operations. More importantly the construction works for algebraic stacks. For instance, the six operation are also defined for admissible stacks over finite fields. This allows Abe to give a proof on a $p$-adic geometric Langlands correspondence, which links the set of isocrystals and cuspidal automorphic representation of the function fields.\\

\indent In our situation, a $p$-adic story is expected. Our idea is parallel to Lurie and Gaitsgory \cite{GL1}, namely we first consider the higher categorical enrichment of the suitable derived category of arithmetic $\mathcal{D}$-modules, then define the corresponding Gross motive \cite{Gro1} attached to the group scheme $G$ (which we will call it to be \text{rigid Gross $G$-motive}). Then we will check the product formula for the Grothendieck-Lefschetz trace formula, which will finish the proof as in the $\ell$-adic proof.

\newpage
\section{Arithmetic $\mathcal{D}$-modules over Schemes}

\subsection{The $\infty$-category $\mathbb{D}^\mathrm{b}_\mathrm{hol}(X)$ of constructible objects}

\indent In this section we are going to define the $\infty$-category $\mathbb{D}^\mathrm{b}_\mathrm{hol}(X)$ as the $\infty$-categorical enrichment of the usual derived category of holonomic complexes defined for quasi-projective schemes by \cite{Abe1}. In order to be more precise we consider the following notations. First we fix a finite extension $k/\mathbb{F}_p$. And we let $X$ be a quasi-projective scheme over $k$. Then we choose some lift of $X$ over $R$ which is a complete discretely valued ring whose residue field is $k$, and we are going to use the notation $K$ to denote its fraction field. We keep the notation $X$ when we talk about its lift, if this does not cause any confusion.

\begin{definition}
We are going to systematically use the language of $\infty$-category from \cite{Lurie1}. To be more precise we first consider the derived category of the constructible holonomic $\mathcal{D}$-modules denoted by $D^\mathrm{b}_\mathrm{hol}(X)$, which play the role of the $D^\mathrm{b}_\ell(X/k)$ which is the $\ell$-adic complexes constructible over $X$ as in \cite{GL1}. Then we consider the corresponding $\infty$-categorical enrichment, which is to say we are going to construct a simplicial set from $D^\mathrm{b}_\mathrm{hol}(X)$, which we will denote it by $\mathbb{D}^\mathrm{b}_\mathrm{hol}(X)$, whose homotopy category gives rise to the derived category recalled above, which is to say $D^\mathrm{b}_\mathrm{hol}(X)$.
\end{definition}

\indent We remard that here the category $\mathrm{Hol}(X)$ defined in \cite[Definition 1.1.1]{Abe1} does not natural have enough injectives, which means one could not directly apply the construction in \cite[Section 1.3.2, Section 1.3.5]{Lurie2}. Instead we consider the following construction:

\begin{definition}
We will use the notation $\mathrm{Hol}(X)$ and $\mathrm{Con}(X)$ to denote the corresponding categories of holonomic $\mathcal{D}$-modules and the corresponding contructible ones defined in \cite[Definition after Proposition 1.3.3]{Abe1}. We use the notation $\mathrm{Ind}(\mathrm{Con}(X))$ to denote the corresponding ind-category associated to the category $\mathrm{Con}(X)$. Note that this is now a Grothendieck category which admits enough injectives, then we apply the construction in \cite[Section 1.3.2, Section 1.3.5]{Lurie2} to get the corresponding derive $\infty$-catogory, which we will denote it by $\mathbb{D}(\mathrm{Ind}(\mathrm{Con}(X)))$. Then we define:
\begin{displaymath}
\mathbb{D}_\mathrm{hol}(X):=\mathbb{D}(\mathrm{Con}(X)):=\mathbb{D}_{\mathrm{Con}(X)}(\mathrm{Ind}(\mathrm{Con}(X))),	
\end{displaymath}
where the latter is the full subcategory of $\mathbb{D}(\mathrm{Ind}(\mathrm{Con}(X)))$ consists of all the complex whose homologies live in $\mathrm{Con}(X)$. Passing to the homotopy categories we have:
\begin{displaymath}
D_\mathrm{hol}(X):=D(\mathrm{Con}(X)):=D_{\mathrm{Con}(X)}(\mathrm{Ind}(\mathrm{Con}(X))).	
\end{displaymath}

\end{definition}

\begin{remark}
By \cite[Section 1.3]{Lurie2} we have that the $\infty$-category $\mathbb{D}_\mathrm{hol}(X)$ is a stable $\infty$-category.	
\end{remark}

\begin{proof}
It suffices to prove that the cofibers are preserved under the embedding of the abelian categories $\mathrm{Con}(X)\rightarrow \mathrm{Ind}\mathrm{Con}(X)$. Then to finish, one check the long exact sequence for any mapping cone $C^\bullet(f)$, but then one finishes since on the level of derived category we have the corresponding result:
\begin{displaymath}
D(\mathrm{Con}(X))\overset{\sim}{\rightarrow}D_{\mathrm{Con}(X)}(\mathrm{Ind}\mathrm{Con}(X)).
\end{displaymath}	
\end{proof}

\subsection{Inverse and Direct Images in Derived $\infty$-Category}

\indent Recall that \cite{Abe1} finishes the searching for the six operations at least for quasi-projective schemes over $\mathbb{F}_p$, implicitly relying on Kedlaya's work on Shiho's conjecture \cite{Ked4}, \cite{Ked5}, \cite{Ked6}, \cite{Ked7}. Everything could be promoted to the derived $\infty$-categorical constructions as above. We recall everything here. To be more precise we consider any morphism of quasi-projective schemes over $k$, which we denote it as $f:X\rightarrow Y$. Then we have four derived functor:
\begin{align}
f_!: D^\mathrm{b}_\mathrm{hol}(X/K)\rightarrow D^\mathrm{b}_\mathrm{hol}(Y/K),\\
f_+: D^\mathrm{b}_\mathrm{hol}(X/K)\rightarrow D^\mathrm{b}_\mathrm{hol}(Y/K),\\
f^!: D^\mathrm{b}_\mathrm{hol}(Y/K)\rightarrow D^\mathrm{b}_\mathrm{hol}(X/K),\\
f^+: D^\mathrm{b}_\mathrm{hol}(Y/K)\rightarrow D^\mathrm{b}_\mathrm{hol}(X/K).
\end{align}

\indent We would like to promote the operations onto the derived $\infty$-level, which is to say that we have the following derived $\infty$-functors having the form as in the following:

\begin{align}
f_!: \mathbb{D}_\mathrm{hol}(X/K)\rightarrow \mathbb{D}_\mathrm{hol}(Y/K),\\
f_+: \mathbb{D}_\mathrm{hol}(X/K)\rightarrow \mathbb{D}_\mathrm{hol}(Y/K),\\
f^!: \mathbb{D}_\mathrm{hol}(Y/K)\rightarrow \mathbb{D}_\mathrm{hol}(X/K),\\
f^+: \mathbb{D}_\mathrm{hol}(Y/K)\rightarrow \mathbb{D}_\mathrm{hol}(X/K).
\end{align}

\indent Not that as in \cite{Abe1}, our construction of derived $\infty$-category used essentially the techniques of inductive objects. Therefore the six operator should be deduced from the following ones:

\begin{align}
\underline{f_!}: \mathbb{D}(\mathrm{Ind}\mathrm{Con}(X))\rightarrow \mathbb{D}(\mathrm{Ind}\mathrm{Con}(Y)),\\
\underline{f_+}: \mathbb{D}(\mathrm{Ind}\mathrm{Con}(X))\rightarrow \mathbb{D}(\mathrm{Ind}\mathrm{Con}(Y)),\\
\underline{f^!}: \mathbb{D}(\mathrm{Ind}\mathrm{Con}(Y))\rightarrow \mathbb{D}(\mathrm{Ind}\mathrm{Con}(X)),\\
\underline{f^+}: \mathbb{D}(\mathrm{Ind}\mathrm{Con}(Y))\rightarrow \mathbb{D}(\mathrm{Ind}\mathrm{Con}(X)).
\end{align}
 
which are derived from the level of ind-category $\mathrm{Ind}\mathrm{Con}(X)$. Here the definition of the derived $\infty$-category $\mathbb{D}$ is essentially \cite[Section 1.3.5]{Lurie2}. First we have the following lemma based on the discussion in \cite{Abe1}. And we following the notations (for instance the constructible t-structure). Our discussion is essentially based on the observation from \cite[Section 1.2]{Abe1}. One could actually regard the discussion presented here as sort of $\infty$-enrichment on the $t$-structures introduced in \cite{Abe1}, under the philosophy of Riemann-Hilbert correspondence.

\begin{lemma}
Let $f$ as a morphism as above, we have that the pullback $\underline{\underline{f^+}}: \mathrm{Ind}\mathrm{Con}(Y)\rightarrow \mathrm{Ind}\mathrm{Con}(X)$ is well defined which sends objects in $\mathrm{Ind}\mathrm{Con}(Y)$ to those in $\mathrm{Ind}\mathrm{Con}(X)$, and the functor is c-t-exact. 
\end{lemma}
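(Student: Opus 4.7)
The plan is to extend Abe's constructible pullback from the small abelian category $\mathrm{Con}(Y)$ (the heart of the constructible t-structure) to its ind-completion via the universal property of Ind-categories, and to verify c-t-exactness by reducing to the constructible case using the exactness of filtered colimits in a Grothendieck abelian category.

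First, I would take as input from \cite[Section 1.2]{Abe1} the fact that the pullback functor $f^+$, when restricted to the heart of the constructible t-structure, gives a well-defined exact functor $f^+\colon \mathrm{Con}(Y) \to \mathrm{Con}(X)$. This is the constructibility-preservation statement together with the exactness of $f^+$ for the constructible t-structure on the derived category, specialized to degree zero. Second, I would apply the universal property of the Ind-completion: since $\mathrm{Ind}\mathrm{Con}(X)$ admits all small filtered colimits, the composite
\begin{equation*}
\mathrm{Con}(Y) \xrightarrow{f^+} \mathrm{Con}(X) \hookrightarrow \mathrm{Ind}\mathrm{Con}(X)
\end{equation*}
extends uniquely, up to natural isomorphism, to a filtered-colimit-preserving functor $\underline{\underline{f^+}}\colon \mathrm{Ind}\mathrm{Con}(Y) \to \mathrm{Ind}\mathrm{Con}(X)$. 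This simultaneously establishes the existence of the extended functor and the fact that it lands in $\mathrm{Ind}\mathrm{Con}(X)$, since by construction it is a colimit of objects already living in $\mathrm{Con}(X) \subset \mathrm{Ind}\mathrm{Con}(X)$.

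Third, for the c-t-exactness, I would argue as follows. Any short exact sequence $0 \to A \to B \to C \to 0$ in $\mathrm{Ind}\mathrm{Con}(Y)$ can be written as a filtered colimit of short exact sequences $0 \to A_\alpha \to B_\alpha \to C_\alpha \to 0$ with each $A_\alpha, B_\alpha, C_\alpha \in \mathrm{Con}(Y)$; this uses the standard presentation of every ind-object as a filtered colimit of its compact subobjects together with the fact that filtered colimits, kernels, and cokernels are compatible in the Grothendieck abelian setting. Applying $\underline{\underline{f^+}}$ and using that it commutes with filtered colimits by its very construction, together with the c-t-exactness of $f^+$ on the constructible hearts from the first step, we get a filtered colimit of short exact sequences in $\mathrm{Con}(X)$. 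By exactness of filtered colimits in $\mathrm{Ind}\mathrm{Con}(X)$, this is a short exact sequence in the target.

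The main obstacle I anticipate is the technical verification that every short exact sequence in $\mathrm{Ind}\mathrm{Con}(Y)$ is presentable as a filtered colimit of short exact sequences in $\mathrm{Con}(Y)$. While this is standard for ind-completions of arbitrary abelian categories, in our setting the abelian structure on $\mathrm{Con}(Y)$ is the one inherited from the constructible t-structure on $D^\mathrm{b}_\mathrm{hol}(Y)$ rather than a tautological one, so one must check that compact generators of $\mathrm{Ind}\mathrm{Con}(Y)$ coincide precisely with the constructible objects and that the constructible notions of kernel and cokernel are preserved by the embedding $\mathrm{Con}(Y) \hookrightarrow \mathrm{Ind}\mathrm{Con}(Y)$. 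Once this identification is pinned down, the rest of the argument is formal and amounts to bootstrapping the constructible-level statement of Abe along the universal property of Ind.
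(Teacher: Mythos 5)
Your proposal is correct and is essentially the route the paper takes: the paper's proof is simply a citation to Abe's treatment (Section 1.3 of the cited work), where the preservation of constructibility and the c-t-exactness of $f^{+}$ on the constructible hearts are established, and the passage to $\mathrm{Ind}\mathrm{Con}$ is the standard colimit-preserving extension you describe. Your write-up just makes explicit the formal bootstrapping (universal property of the Ind-completion plus exactness of filtered colimits) that the paper leaves implicit.
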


\begin{proof}
See \cite[Section 1.3]{Abe1}.
\end{proof}

\begin{lemma}
Over the level of derived $\infty$-category, we have the right adjoint $\underline{\underline{f^\wedge_{+}}}: \mathbb{D}(\mathrm{Ind}\mathrm{Con}(X))\rightarrow \mathbb{D}(\mathrm{Ind}\mathrm{Con}(Y))$ of the functor in the previous lemma which is well defined, and which is left c-t-exact.	
\end{lemma}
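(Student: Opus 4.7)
The plan is to construct $\underline{\underline{f^\wedge_+}}$ via the adjoint functor theorem for presentable stable $\infty$-categories. First I would verify that both $\mathbb{D}(\mathrm{Ind}\mathrm{Con}(X))$ and $\mathbb{D}(\mathrm{Ind}\mathrm{Con}(Y))$ are presentable stable $\infty$-categories: the ind-completions $\mathrm{Ind}\mathrm{Con}(-)$ are Grothendieck abelian (having exact filtered colimits and a small generator inherited from the essentially small $\mathrm{Con}(-)$), so the derived $\infty$-category construction of \cite[Section 1.3.5]{Lurie2} produces presentable stable $\infty$-categories equipped with an accessible $t$-structure extending the constructible one on the heart.

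Next, I would lift $\underline{\underline{f^+}}$ of the previous lemma to an $\infty$-functor $\underline{f^+}: \mathbb{D}(\mathrm{Ind}\mathrm{Con}(Y))\rightarrow \mathbb{D}(\mathrm{Ind}\mathrm{Con}(X))$. Since $\underline{\underline{f^+}}$ is exact as a functor between abelian categories (c-t-exact on the hearts) and compatible with filtered colimits by its very construction on ind-objects, the universal property of $\mathbb{D}(\mathrm{Ind}\mathrm{Con}(Y))$ produces a unique extension $\underline{f^+}$ preserving all small colimits. With $\underline{f^+}$ so promoted, I would invoke the presentable adjoint functor theorem \cite[Corollary 5.5.2.9]{Lurie1} to obtain the desired right adjoint $\underline{\underline{f^\wedge_+}}$. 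The left c-t-exactness is then formal: since $\underline{f^+}$ is (fully) $t$-exact, its right adjoint is automatically left $t$-exact; equivalently, right adjoints preserve limits and the subcategory $\mathbb{D}^{\geq 0}$ in any $t$-structure is closed under limits, so $\underline{\underline{f^\wedge_+}}$ restricts to $\mathbb{D}^{\geq 0}(\mathrm{Ind}\mathrm{Con}(X)) \to \mathbb{D}^{\geq 0}(\mathrm{Ind}\mathrm{Con}(Y))$.

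The hard part, I expect, is not the formal adjoint production itself but the subsequent task of reconciling the abstractly obtained $\underline{\underline{f^\wedge_+}}$ with Abe's classical $f_+$ of \cite{Abe1} once restricted to the holonomic subcategory $\mathbb{D}_{\mathrm{hol}}$. The adjoint functor theorem a priori only produces a functor on the full derived $\infty$-category of ind-constructibles, and preserving the holonomicity/constructibility condition under this right adjoint requires further genuinely geometric input feeding on Kedlaya's resolution of Shiho's conjecture as invoked in \cite{Abe1}. That issue, however, properly belongs to the next step in the development rather than to the present lemma, whose content is exhausted by the abstract presentable $\infty$-categorical adjunction above.
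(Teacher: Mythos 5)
Your proposal is correct in outline, and you should know that the paper itself offers no proof of this lemma at all: it is stated bare, with the surrounding text simply deferring to Abe's triangulated-level results (\cite[Sections 1.2, 1.3]{Abe1}) and the assertion that everything extends to the $\infty$-categorical setting. Your route --- observe that $\mathrm{Ind}\mathrm{Con}(X)$ and $\mathrm{Ind}\mathrm{Con}(Y)$ are Grothendieck abelian, so that $\mathbb{D}(\mathrm{Ind}\mathrm{Con}(-))$ is presentable stable with an accessible $t$-structure by \cite[Section 1.3.5]{Lurie2}; descend the exact, colimit-preserving $\underline{\underline{f^+}}$ to a colimit-preserving functor of derived $\infty$-categories; invoke the adjoint functor theorem \cite[Corollary 5.5.2.9]{Lurie1}; and deduce left c-t-exactness of the right adjoint formally from right c-t-exactness of the left adjoint --- is the standard way to make the statement honest, and each step goes through. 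Two small repairs: the passage of $\underline{\underline{f^+}}$ to $\mathbb{D}$ is better justified directly (an exact functor preserves quasi-isomorphisms, hence passes to the localization of complexes, and it preserves colimits there because it is exact and preserves coproducts and filtered colimits at the abelian level) rather than by appeal to a ``universal property'' of $\mathbb{D}(\mathrm{Ind}\mathrm{Con}(Y))$, whose formulations in \cite{Lurie2} are adapted to boundedness and injectives rather than to arbitrary exact functors; and only right t-exactness of the lifted $\underline{f^+}$ is needed for the adjunction argument, not full t-exactness. You are also right to flag that the abstractly produced adjoint lives only on ind-objects, and that identifying it with Abe's $f_+$ and checking preservation of the constructible/holonomic subcategory is genuine geometric content --- this is precisely what the paper silently borrows from \cite{Abe1} --- but since the lemma as stated concerns only $\mathbb{D}(\mathrm{Ind}\mathrm{Con}(-))$, your argument covers what is claimed, and indeed supplies a proof where the paper supplies none.
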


\indent Then we consider the \'etale morphism:

\begin{lemma}
Let $f$ as a morphism as above that is \'etale, then we have that first the pullback functor $f^+:\mathrm{Ind}\mathrm{Con}(Y)\rightarrow \mathrm{Ind}\mathrm{Con}(X)$ which admits a left adjoint functor over the level of derived $\infty$-categories $\underline{\underline{f^\wedge_{!}}}: \mathbb{D}(\mathrm{Ind}\mathrm{Con}(X))\rightarrow \mathbb{D}(\mathrm{Ind}\mathrm{Con}(Y))$ .	
\end{lemma}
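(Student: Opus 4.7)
The plan is to construct the left adjoint in three stages, descending from the abelian setting of $\mathrm{Con}$ to the derived $\infty$-category of $\mathrm{Ind}\,\mathrm{Con}$. First, I would invoke Abe's work: for an \'etale morphism $f: X\to Y$ the functor $f^+: \mathrm{Con}(Y) \to \mathrm{Con}(X)$ already admits a left adjoint $f_{!}: \mathrm{Con}(X) \to \mathrm{Con}(Y)$ at the level of the abelian categories of constructible holonomic arithmetic $\mathcal{D}$-modules. In the \'etale situation $f_+$ and $f_!$ agree (up to the appropriate normalization) and provide the left adjoint of $f^+ \simeq f^!$; this $f_!$ is automatically right exact.

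Second, I would extend the adjunction to the Ind-completions. The key tool is the universal property of $\mathrm{Ind}$: any functor that preserves filtered colimits out of $\mathrm{Ind}\,\mathrm{Con}(X)$ is determined by its restriction to $\mathrm{Con}(X)$. One defines $\underline{\underline{f_{!}}}: \mathrm{Ind}\,\mathrm{Con}(X) \to \mathrm{Ind}\,\mathrm{Con}(Y)$ as the unique filtered-colimit-preserving extension of $f_{!}$, while $\underline{\underline{f^+}}$ on the Ind-level is likewise the filtered-colimit-preserving extension of $f^+$ from the previous lemma. The Hom-adjunction extends from compact generators to arbitrary filtered colimits in both arguments, yielding an adjoint pair $(\underline{\underline{f_{!}}}, \underline{\underline{f^+}})$ of abelian functors.

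Third, I would promote the adjunction to the derived $\infty$-categories. Since $\mathrm{Ind}\,\mathrm{Con}(X)$ and $\mathrm{Ind}\,\mathrm{Con}(Y)$ are Grothendieck abelian categories, the derived $\infty$-categories $\mathbb{D}(\mathrm{Ind}\,\mathrm{Con}(\bullet))$ formed as in \cite[Section 1.3.5]{Lurie2} are presentable stable $\infty$-categories. The exact, filtered-colimit-preserving functor $\underline{\underline{f^+}}$ extends to a functor on derived $\infty$-categories that preserves all small limits, and then the $\infty$-categorical adjoint functor theorem supplies the left adjoint $\underline{\underline{f^\wedge_{!}}}$. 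An alternative route is to construct $\underline{\underline{f^\wedge_{!}}}$ directly as the left-derived functor of the right-exact $\underline{\underline{f_{!}}}$ by means of suitable resolutions inside the Grothendieck category.

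The hard part will be verifying that the abstract adjoint coming from the adjoint functor theorem agrees, on the heart of the constructible $t$-structure, with the geometric lower-shriek. Because Ind-categories typically lack enough projectives, one cannot directly form classical projective resolutions; instead one must argue via Spaltenstein-type K-projective (or cellular) resolutions compatible with the c-t-structure, or check by hand that $\underline{\underline{f_{!}}}$ sends enough $\underline{\underline{f^+}}$-acyclics to $\underline{\underline{f^\wedge_{!}}}$-acyclics so the derived functor descends correctly. This compatibility is what ultimately ties $\underline{\underline{f^\wedge_{!}}}$ back to Abe's lower-shriek on $\mathbb{D}^\mathrm{b}_\mathrm{hol}$ and makes the construction usable in the subsequent six-functor formalism.
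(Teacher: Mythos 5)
Your proposal is correct and follows essentially the same route as the paper, whose entire proof is the remark that everything extends from the abelian-level adjunction of \cite[Section 1.2.2]{Abe1} to the Ind/derived $\infty$-categorical setting. Your sketch simply fills in the standard intermediate steps (filtered-colimit extension to $\mathrm{Ind}\,\mathrm{Con}$, then the adjoint functor theorem or left-derivation in the Grothendieck category), which is exactly the elaboration the paper's citation leaves implicit.
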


\begin{proof}
Everything extends from the level of abelian category to this setting. Also see \cite[Section 1.2.2]{Abe1}.	
\end{proof}

\indent Finally for the proper morphisms we have:

\begin{lemma}
Let $f$ as a morphism above that is proper, then the $!$-pullback functor $\underline{\underline{f^!}}:\mathbb{D}(\mathrm{Ind}\mathrm{Con}(Y))\rightarrow \mathbb{D}(\mathrm{Ind}\mathrm{Con}(X))$ is well defined.	
\end{lemma}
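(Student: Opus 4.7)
The plan is to reduce the assertion to the ind-completion of Abe's construction and then invoke Lurie's derived $\infty$-category machinery, exactly parallel to the two preceding lemmas. First I would recall that at the level of holonomic coefficients, Abe's $f^!$ is well-defined for any morphism of quasi-projective schemes, and in the proper case it coincides (up to a shift/twist by the relative dualizing object) with $f^+$ via Poincar\'e--Verdier duality (see \cite[Section 1.1.3, Section 1.1.4]{Abe1}). In particular $\underline{\underline{f^!}}$ restricted to $\mathrm{Con}(Y)$ lands in bounded holonomic complexes over $X$ and enjoys the same c-t-exactness properties as $f^+$ up to shift.

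The second step is to extend $f^!$ to the ind-category. The point is that $\mathrm{Ind}\mathrm{Con}(Y)$ is the free cocompletion of $\mathrm{Con}(Y)$ under filtered colimits, and one defines $\underline{\underline{f^!}}: \mathrm{Ind}\mathrm{Con}(Y)\to \mathrm{Ind}\mathrm{Con}(X)$ as the unique filtered-colimit preserving extension of the restriction of $f^!$ to $\mathrm{Con}(Y)$. Since properness makes $f^!$ behave well with respect to the constructible t-structure (it differs from $f^+$ only by the proper relative dualizing object, which is itself holonomic), the extension preserves the ind-constructible subcategory and is compatible with the t-structure on $\mathrm{Ind}\mathrm{Con}$.

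The third step is to promote this to a functor on derived $\infty$-categories. Here I would invoke \cite[Section 1.3.5]{Lurie2}, which produces, from a right-exact (resp.\ left-exact) functor between Grothendieck abelian categories, a canonical derived functor between the associated derived $\infty$-categories. Applied to our setting, $\underline{\underline{f^!}}$ on $\mathrm{Ind}\mathrm{Con}(Y)$ yields the required functor $\mathbb{D}(\mathrm{Ind}\mathrm{Con}(Y))\to\mathbb{D}(\mathrm{Ind}\mathrm{Con}(X))$. The compatibility with the constructible subcategory follows by restricting to bounded complexes with constructible cohomology, as in the proof that $\mathbb{D}_{\mathrm{hol}}(X)$ is a stable subcategory.

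The main obstacle I anticipate is the verification that the ind-extension of $f^!$ sends ind-constructible objects to ind-constructible objects and interacts correctly with the constructible t-structure, parallel to the c-t-exactness statement for $f^+$ in the first lemma. Once this is granted, closure under filtered colimits and Lurie's general derived-functor formalism do the rest; the subtle content is really the stability under $f^!$ of the relevant holonomic/constructible subcategory in the proper case, which is where the properness hypothesis is essentially used (via Abe's relative duality and the fact that the proper dualizing complex is holonomic, cf.\ \cite[Section 1.2]{Abe1}).
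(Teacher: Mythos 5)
The paper itself gives no argument for this lemma at all: it is stated bare, immediately before the next section, so the only thing to measure you against is the implicit strategy used for the neighbouring lemmas (extend Abe's operation from $\mathrm{Con}$ to $\mathrm{Ind}\,\mathrm{Con}$ and then pass to derived $\infty$-categories via \cite[Section 1.3.5]{Lurie2}), and your overall outline does match that template. However, your central justification contains a genuine error. For a \emph{proper} morphism, Poincar\'e--Verdier duality gives $f_!\simeq f_+$; it does \emph{not} give $f^!\simeq f^+$ up to a shift/twist by a relative dualizing object --- that identification is the \emph{smooth} case. In general $f^!=\mathbf{D}_X f^+\mathbf{D}_Y$, and already for a closed immersion (which is proper) $f^!$ is sections-with-support and is neither a twist of $f^+$ nor c-t-exact. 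Consequently your claim that $f^!$ ``enjoys the same c-t-exactness properties as $f^+$ up to shift,'' on which you base both the preservation of the (ind-)constructible subcategory and the applicability of the exact-functor derivation machinery, does not hold as stated.

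The argument can be repaired, but along a different axis: either quote Abe's stability of holonomic/constructible complexes under all six operations (so $f^!$ preserves $D^{\mathrm{b}}_{\mathrm{hol}}$, cf.\ \cite[Section 1.1.3]{Abe1}) and then take the unique filtered-colimit-preserving extension to $\mathbb{D}(\mathrm{Ind}\,\mathrm{Con}(-))$, observing that this extension agrees with the right adjoint of $f_!$ precisely because $f_!$ preserves the compact (constructible) objects; or, as in the $\ell$-adic treatment of \cite{GL1}, define $\underline{\underline{f^!}}$ for proper $f$ directly as the right adjoint of $f_+\simeq f_!$ furnished by the adjoint functor theorem for presentable stable $\infty$-categories, and only afterwards check compatibility with the constructible subcategory. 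Note also that \cite[Section 1.3.5]{Lurie2} derives left/right exact functors between Grothendieck abelian categories, and $f^!$ is not exact on the abelian ind-category, so the promotion to $\mathbb{D}$ should be done at the level of the derived adjunction (or via the triangulated functor on complexes), not by deriving an abelian-level functor; properness enters through the identification $f_+\simeq f_!$ and the resulting adjunction, not through any dualizing-twist comparison of $f^!$ with $f^+$.
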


\newpage
\subsection{$\infty$-Ind-Categories}

\noindent In this section, we are going to define the corresponding $\infty$-Ind-Categorie which will contain essentially the constructible objects in our previous discussion. This will be a $p$-adic analog of the corresponding $\infty$-category of $\ell$-adic sheaves in \cite{GL1}. First we give the following definition:

\begin{definition}
We are going to use the notation $\overline{\mathbb{D}}_\mathrm{hol}(X/K)$	to denote the $\infty$-category $\mathrm{Ind}\mathbb{D}_\mathrm{hol}(X/K)$ constructed from the $\infty$-category $\mathbb{D}_\mathrm{hol}(X/K)$.
\end{definition}

\indent Then we discuss the extension of the operations to the category $\overline{\mathbb{D}}_\mathrm{hol}(X/K)$ as in the $\ell$-adic situation as considered in \cite{GL1}.

\indent Now base on the discussion in the previous subsection on the derived functors over the derived $\infty$-category, one could make the discussion more general to the derived categories defined above. So first the functor associated to any morphism $f:X\rightarrow Y$ between any quasi-projective schemes over $k$:

\begin{displaymath}
\underline{\underline{f^+}}: \mathbb{D}_\mathrm{hol}(Y/K)\rightarrow \mathbb{D}_\mathrm{hol}(X/K),\underline{\underline{f^\wedge_+}}: \mathbb{D}_\mathrm{hol}(X/K)\rightarrow \mathbb{D}_\mathrm{hol}(Y/K),	
\end{displaymath}

extend uniquely to the derived $\infty$-categories:

\begin{displaymath}
f^+: \overline{\mathbb{D}}_\mathrm{hol}(Y/K)\rightarrow \overline{\mathbb{D}}_\mathrm{hol}(X/K),f_+: \overline{\mathbb{D}}_\mathrm{hol}(X/K)\rightarrow \overline{\mathbb{D}}_\mathrm{hol}(Y/K).	
\end{displaymath}

\indent As above, when $f$ is \'etale we have that from the level of the constructible objects we have the following morphisms:

\begin{displaymath}
\underline{\underline{f^+}}: \mathbb{D}_\mathrm{hol}(Y/K)\rightarrow \mathbb{D}_\mathrm{hol}(X/K),\underline{\underline{f^\wedge_!}}: \mathbb{D}_\mathrm{hol}(X/K)\rightarrow \mathbb{D}_\mathrm{hol}(Y/K),	
\end{displaymath}

extend uniquely to the derived $\infty$-categories:

\begin{displaymath}
f^+: \overline{\mathbb{D}}_\mathrm{hol}(Y/K)\rightarrow \overline{\mathbb{D}}_\mathrm{hol}(X/K),f_!: \overline{\mathbb{D}}_\mathrm{hol}(X/K)\rightarrow \overline{\mathbb{D}}_\mathrm{hol}(Y/K).	
\end{displaymath}

\indent Then in the situation when $f$ is proper we have the extension of the $!$-pullback functor.

\newpage
\section{Arithmetic $\mathcal{D}$-modules over Stacks}

\indent We use a general notation $\mathcal{X}$ to denote a general algebraic stack over the base finite field essentially in \cite[Construction 3.2.5.1]{GL1}. Then we use the functor of points techniques as in \cite[Construction 3.2.5.1]{GL1} define all the categories as the corresponding homotopy limit through all the functors of points $X_i,i=1,2,...$\footnote{Taking the limit through all the functors of points by considering opposite category.}\footnote{Certainly one can consider the categories for stacks in \cite{Abe} directly and consider the corresponding inductive categories to achieve $\infty$-enhancement.}:
\begin{align}
\underset{X_i}{\mathrm{homotopylimit}}(*).	
\end{align}

\begin{definition}
We are going to systematically use the language of $\infty$-category from \cite{Lurie1}. To be more precise we first consider the derived category of the constructible holonomic $\mathcal{D}$-modules denoted by $D^\mathrm{b}_\mathrm{hol}(\mathcal{X})$, which play the role of the $D^\mathrm{b}_\ell(\mathcal{X}/k)$ which is the $\ell$-adic complexes constructible over $\mathcal{Y}$ as in \cite{GL1}. Then we consider the corresponding $\infty$-categorical enrichment, which is to say we are going to construct a simplicial set from $D^\mathrm{b}_\mathrm{hol}(\mathcal{X})$, which we will denote it by $\mathbb{D}^\mathrm{b}_\mathrm{hol}(\mathcal{X})$, whose homotopy category gives rise to the derived category recalled above, which is to say $D^\mathrm{b}_\mathrm{hol}(\mathcal{X})$.
\end{definition}

\indent We remard that here the category $\mathrm{Hol}(X)$ defined in \cite[Definition 1.1.1]{Abe1} does not natural have enough injectives, which means one could not directly apply the construction in \cite[Section 1.3.2, Section 1.3.5]{Lurie2}. Instead we consider the following construction:

\begin{definition}
We will use the notation $\mathrm{Hol}(X)$ and $\mathrm{Con}(X)$ to denote the corresponding categories of holonomic $\mathcal{D}$-modules and the corresponding contructible ones defined in \cite[Definition after Proposition 1.3.3]{Abe1}. We use the notation $\mathrm{Ind}(\mathrm{Con}(\mathcal{X}))$ to denote the corresponding ind-category associated to the category $\mathrm{Con}(\mathcal{X})$. Note that this is now a Grothendieck category which admits enough injectives, then we apply the construction in \cite[Section 1.3.2, Section 1.3.5]{Lurie2} to get the corresponding derive $\infty$-catogory, which we will denote it by $\mathbb{D}(\mathrm{Ind}(\mathrm{Con}(\mathcal{X})))$. Then we define:
\begin{displaymath}
\mathbb{D}_\mathrm{hol}(\mathcal{X}):=\mathbb{D}(\mathrm{Con}(\mathcal{X})):=\mathbb{D}_{\mathrm{Con}(\mathcal{X})}(\underset{X_i}{\mathrm{homotopylimit}}(*)\mathrm{Ind}(\mathrm{Con}(X_i))),	
\end{displaymath}
where the latter is the full subcategory of $\mathbb{D}(\mathrm{Ind}(\mathrm{Con}(\mathcal{X})))$ consists of all the complex whose homologies live in $\mathrm{Con}(\mathcal{X})$. Passing to the homotopy categories we have:
\begin{displaymath}
D_\mathrm{hol}(\mathcal{X}):=D(\mathrm{Con}(\mathcal{X})):=D_{\mathrm{Con}(\mathcal{X})}(\underset{X_i}{\mathrm{homotopylimit}}(*)\mathrm{Ind}(\mathrm{Con}(X_i))).	
\end{displaymath}

\end{definition}

\begin{remark}
By \cite[Section 1.3]{Lurie2} we have that the $\infty$-category $\mathbb{D}_\mathrm{hol}(\mathcal{X})$ is a stable $\infty$-category.	
\end{remark}

\begin{proof}
It suffices to prove that the cofibers are preserved under the embedding of the abelian categories $\mathrm{Con}(\mathcal{X})\rightarrow \mathrm{Ind}\mathrm{Con}(\mathcal{X})$. Then to finish, one check the long exact sequence for any mapping cone $C^\bullet(f)$, but then one finishes since on the level of derived category we have the corresponding result:
\begin{displaymath}
D(\mathrm{Con}(\mathcal{X}))\overset{\sim}{\rightarrow}D_{\mathrm{Con}(\mathcal{X})}(\mathrm{Ind}\mathrm{Con}(\mathcal{X})).
\end{displaymath}	
\end{proof}

\newpage
\subsection{Inverse and Direct Images in Derived $\infty$-Category}

\indent Recall that \cite[Section 1.1.3]{Abe1} finishes the searching for the six operations at least for quasi-projective schemes over $\mathbb{F}_p$, implicitly relying on Kedlaya's work on Shiho's conjecture \cite{Ked4}, \cite{Ked5}, \cite{Ked6}, \cite{Ked7}. Everything could be promoted to the derived $\infty$-categorical constructions as above. We recall everything here. To be more precise we consider any morphism, which we denote it as $f:\mathcal{X}\rightarrow \mathcal{Y}$. Then we have four derived functor:
\begin{align}
f_!: D^\mathrm{b}_\mathrm{hol}(\mathcal{X}/K)\rightarrow D^\mathrm{b}_\mathrm{hol}(\mathcal{Y}/K),\\
f_+: D^\mathrm{b}_\mathrm{hol}(\mathcal{X}/K)\rightarrow D^\mathrm{b}_\mathrm{hol}(\mathcal{Y}/K),\\
f^!: D^\mathrm{b}_\mathrm{hol}(\mathcal{Y}/K)\rightarrow D^\mathrm{b}_\mathrm{hol}(\mathcal{X}/K),\\
f^+: D^\mathrm{b}_\mathrm{hol}(\mathcal{Y}/K)\rightarrow D^\mathrm{b}_\mathrm{hol}(\mathcal{X}/K),
\end{align}
by using\footnote{All the parallel definitions up to the details by using the limit as in the following will be given in the same way.}:
\begin{align}
\underset{X_i,Y_i}{\mathrm{homotopylimit}}(*) D^\mathrm{b}_\mathrm{hol}(X_i/K)\rightarrow D^\mathrm{b}_\mathrm{hol}(Y_i/K),\\
\underset{X_i,Y_i}{\mathrm{homotopylimit}}(*) D^\mathrm{b}_\mathrm{hol}(X_i/K)\rightarrow D^\mathrm{b}_\mathrm{hol}(Y_i/K),\\
\underset{X_i,Y_i}{\mathrm{homotopylimit}}(*) D^\mathrm{b}_\mathrm{hol}(Y_i/K)\rightarrow D^\mathrm{b}_\mathrm{hol}(X_i/K),\\
\underset{X_i,Y_i}{\mathrm{homotopylimit}}(*) D^\mathrm{b}_\mathrm{hol}(Y_i/K)\rightarrow D^\mathrm{b}_\mathrm{hol}(X_i/K).
\end{align}

\indent We would like to promote the operations onto the derived $\infty$-level, which is to say that we have the following derived $\infty$-functors having the form as in the following:

\begin{align}
f_!: \mathbb{D}_\mathrm{hol}(\mathcal{X}/K)\rightarrow \mathbb{D}_\mathrm{hol}(\mathcal{Y}/K),\\
f_+: \mathbb{D}_\mathrm{hol}(\mathcal{X}/K)\rightarrow \mathbb{D}_\mathrm{hol}(\mathcal{Y}/K),\\
f^!: \mathbb{D}_\mathrm{hol}(\mathcal{Y}/K)\rightarrow \mathbb{D}_\mathrm{hol}(\mathcal{X}/K),\\
f^+: \mathbb{D}_\mathrm{hol}(\mathcal{Y}/K)\rightarrow \mathbb{D}_\mathrm{hol}(\mathcal{X}/K).
\end{align}

\indent Not that as in \cite[Section 1.2]{Abe1}, our construction of derived $\infty$-category used essentially the techniques of inductive objects. Therefore the six operator should be deduced from the following ones:

\begin{align}
\underline{f_!}: \mathbb{D}(\mathrm{Ind}\mathrm{Con}(\mathcal{X}))\rightarrow \mathbb{D}(\mathrm{Ind}\mathrm{Con}(\mathcal{Y})),\\
\underline{f_+}: \mathbb{D}(\mathrm{Ind}\mathrm{Con}(\mathcal{X}))\rightarrow \mathbb{D}(\mathrm{Ind}\mathrm{Con}(\mathcal{Y})),\\
\underline{f^!}: \mathbb{D}(\mathrm{Ind}\mathrm{Con}(\mathcal{Y}))\rightarrow \mathbb{D}(\mathrm{Ind}\mathrm{Con}(\mathcal{X})),\\
\underline{f^+}: \mathbb{D}(\mathrm{Ind}\mathrm{Con}(\mathcal{Y}))\rightarrow \mathbb{D}(\mathrm{Ind}\mathrm{Con}(\mathcal{X})).
\end{align}
 
which are derived from the level of ind-category $\mathrm{Ind}\mathrm{Con}(\mathcal{X})$. Here the definition of the derived $\infty$-category $\mathbb{D}$ is essentially \cite[Section 1.3.5]{Lurie2}. First we have the following lemma based on the discussion in \cite{Abe1}. And we following the notations (for instance the constructible t-structure). Our discussion is essentially based on the observation from \cite[Section 1.2]{Abe1}. One could actually regard the discussion presented here as sort of $\infty$-enrichment on the $t$-structures introduced in \cite{Abe1}, under the philosophy of Riemann-Hilbert correspondence.

\begin{lemma}
Let $f$ as a morphism as above, we have that the pullback $\underline{\underline{f^+}}: \mathrm{Ind}\mathrm{Con}(\mathcal{Y})\rightarrow \mathrm{Ind}\mathrm{Con}(\mathcal{X})$ is well defined which sends objects in $\mathrm{Ind}\mathrm{Con}(\mathcal{Y})$ to those in $\mathrm{Ind}\mathrm{Con}(\mathcal{X})$, and the functor is c-t-exact. 
\end{lemma}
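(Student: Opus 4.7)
The plan is to bootstrap from the scheme-level analogue, which is Abe's statement recalled at the end of the preceding subsection, and then transfer the result to the stack setting via the homotopy-limit presentation of $\mathrm{Con}(\mathcal{X})$ and $\mathrm{Con}(\mathcal{Y})$ used throughout this section. First I would unwind the definitions: a morphism $f\colon\mathcal{X}\to\mathcal{Y}$ of algebraic stacks, together with compatible atlases, induces a morphism of the indexing diagrams $\{X_i\}\to\{Y_i\}$ and scheme-level morphisms $f_i\colon X_i\to Y_i$. Abe's result then supplies, for each $i$, a pullback $\underline{\underline{f_i^+}}\colon\mathrm{Con}(Y_i)\to\mathrm{Con}(X_i)$ which is well-defined and c-t-exact.

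Next I would extend from $\mathrm{Con}$ to $\mathrm{Ind}\,\mathrm{Con}$ at each level. Since each $\underline{\underline{f_i^+}}$ is exact on the constructible heart and in particular preserves finite colimits, the universal property of ind-completion produces a unique filtered-colimit-preserving extension $\mathrm{Ind}\,\mathrm{Con}(Y_i)\to\mathrm{Ind}\,\mathrm{Con}(X_i)$. Because the natural t-structure on $\mathrm{Ind}\,\mathrm{Con}$ is the filtered-colimit closure of the one on $\mathrm{Con}$, and filtered colimits in Grothendieck abelian categories are exact, this extension remains c-t-exact. The essential image of objects coming from $\mathrm{Ind}\,\mathrm{Con}(\mathcal{Y})$ therefore lands in $\mathrm{Ind}\,\mathrm{Con}(\mathcal{X})$, as required.

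The third step is gluing across the atlas. The family $\{\underline{\underline{f_i^+}}\}_i$ is compatible with the transition maps in the indexing diagram by the base-change natural isomorphisms that are part of Abe's six-functor formalism. Passing to the homotopy limit produces the desired functor $\underline{\underline{f^+}}\colon\mathrm{Ind}\,\mathrm{Con}(\mathcal{Y})\to\mathrm{Ind}\,\mathrm{Con}(\mathcal{X})$, and both c-t-exactness and the well-definedness statement are inherited pointwise from the scheme level, since the t-structure on the homotopy limit is detected on the atlas.

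The hard part, as expected in this $\infty$-categorical enrichment, is ensuring that the compatibility across the atlas is not merely up to isomorphism but fully homotopy coherent, so that $\underline{\underline{f^+}}$ genuinely lifts to the $\infty$-categorical homotopy limit and not just to the 1-categorical limit. This requires invoking the functoriality of limits of $\infty$-categories from \cite{Lurie1}, together with the verification that the base-change natural isomorphisms supplied by Abe satisfy the appropriate higher cocycle conditions under smooth base change. Once this homotopy-coherence is secured, both the well-definedness of $\underline{\underline{f^+}}$ on $\mathrm{Ind}\,\mathrm{Con}$ and its c-t-exactness follow formally from Step 2.
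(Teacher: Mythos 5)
Your proposal is correct in outline, but it is considerably more explicit than what the paper actually does: the paper's entire proof of this lemma is the citation ``See \cite[Section 1.3]{Abe1}'', i.e.\ it treats the stack-level statement as an immediate formal consequence of Abe's scheme-level c-t-exactness of $f^{+}$, leaving the ind-completion and the passage to the homotopy limit over the functor-of-points diagram implicit. Your three-step route (Abe's result on each scheme $X_i$, extension to $\mathrm{Ind}\,\mathrm{Con}$ by the universal property of ind-completion together with exactness of filtered colimits in a Grothendieck category, then gluing over the diagram of points) is exactly the argument the citation is standing in for, and it has the advantage of making visible where each hypothesis is used and why exactness is detected termwise. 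One remark on your final step: for the pullback $\underline{\underline{f^{+}}}$ the homotopy-coherence issue is milder than you suggest, since with the limit-over-points definition used in this section the functor is essentially restriction along the map of indexing diagrams induced by composing points of $\mathcal{X}$ with $f$, and this is coherent for free by functoriality of limits in \cite{Lurie1}; the base-change cocycle conditions you invoke become the genuine crux only for the adjoint functors $f^{\wedge}_{+}$ and $f^{\wedge}_{!}$ treated in the subsequent lemmas, where one cannot define the functor by mere restriction. So your extra care is not wasted, but it is better spent there than on the present statement.
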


\begin{proof}
See \cite[Section 1.3]{Abe1}.
\end{proof}

\begin{lemma}
Over the level of derived $\infty$-category, we have the right adjoint $\underline{\underline{f^\wedge_{+}}}: \mathbb{D}(\mathrm{Ind}\mathrm{Con}(\mathcal{X}))\rightarrow \mathbb{D}(\mathrm{Ind}\mathrm{Con}(\mathcal{Y}))$ of the functor in the previous lemma which is well defined, and which is left c-t-exact.	
\end{lemma}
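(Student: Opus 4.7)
The plan is to construct $\underline{\underline{f^\wedge_+}}$ as the right adjoint via the adjoint functor theorem for presentable stable $\infty$-categories, and then verify left $c$-$t$-exactness by transferring the $c$-$t$-exactness of the pullback at the ind-abelian level through the canonical $t$-structure on the derived $\infty$-category of a Grothendieck abelian category. The construction is closely parallel to the corresponding scheme-level lemma established in the previous subsection; the extra work here is the compatibility with the homotopy-limit presentation $\mathcal{X} = \underset{X_i}{\mathrm{homotopylimit}}(*)$.

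First I would verify that $\underline{\underline{f^+}}: \mathrm{Ind}\mathrm{Con}(\mathcal{Y}) \to \mathrm{Ind}\mathrm{Con}(\mathcal{X})$ is a colimit-preserving functor between Grothendieck abelian categories. By the previous lemma $f^+$ on the level of $\mathrm{Con}$ is $c$-$t$-exact, and its canonical ind-extension preserves all filtered colimits by construction, so $\underline{\underline{f^+}}$ is exact and commutes with small colimits. Deriving therefore produces an exact, colimit-preserving functor of presentable stable $\infty$-categories
\[
\underline{\underline{f^+}}: \mathbb{D}(\mathrm{Ind}\mathrm{Con}(\mathcal{Y})) \longrightarrow \mathbb{D}(\mathrm{Ind}\mathrm{Con}(\mathcal{X})),
\]
by the universal property of the derived $\infty$-category recalled in \cite[Section 1.3.5]{Lurie2}.

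Next I would invoke Lurie's adjoint functor theorem (Higher Topos Theory, Corollary 5.5.2.9): since both the source and the target are presentable stable $\infty$-categories and $\underline{\underline{f^+}}$ preserves all small colimits, it automatically admits a right adjoint which we denote $\underline{\underline{f^\wedge_+}}$. To unfold this through the homotopy-limit presentation of $\mathcal{X}$ and $\mathcal{Y}$, I would argue that a compatible system of right adjoints $\underline{\underline{f^\wedge_{+,i}}}$ for the covering morphisms $X_i \to Y_j$ assembles to a right adjoint on the homotopy limit, since the passage to right adjoints is functorial and limits of presentable $\infty$-categories are presentable. The key input that makes this assembly well-defined is the smooth base-change compatibility in Abe's framework (\cite[Section 1]{Abe} and \cite[Section 1.1.3]{Abe1}) for the face maps of the simplicial object presenting $\mathcal{X}$.

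Finally I would check left $c$-$t$-exactness. The canonical $t$-structure on $\mathbb{D}(\mathrm{Ind}\mathrm{Con}(\mathcal{Y}))$ (in its constructible incarnation) has connective part $\mathbb{D}^{\leq 0}$ consisting of complexes concentrated in non-positive degrees, and since $\underline{\underline{f^+}}$ is $c$-$t$-exact at the ind-abelian level, in particular it is right $c$-$t$-exact, its right adjoint $\underline{\underline{f^\wedge_+}}$ is automatically left $c$-$t$-exact by the standard adjunction yoga: for $M \in \mathbb{D}^{\geq 0}$ and $N \in \mathbb{D}^{\leq -1}$ one computes $\mathrm{Hom}(N, \underline{\underline{f^\wedge_+}} M) \simeq \mathrm{Hom}(\underline{\underline{f^+}}N, M) = 0$. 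The main obstacle I expect is not the abstract adjunction step but the verification that the right adjoint produced abstractly on the homotopy limit actually coincides, on each atlas piece, with the derived $f^+$-pushforward produced at the scheme level; this requires checking that the canonical Beck--Chevalley transformations associated with the smooth covers $X_i \to \mathcal{X}$ and $Y_j \to \mathcal{Y}$ are equivalences, which one reduces to the smooth base-change theorem available in \cite{Abe1} and Kedlaya's resolution of Shiho's conjecture (\cite{Ked4},\cite{Ked5},\cite{Ked6},\cite{Ked7}).
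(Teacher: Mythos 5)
Your proposal is correct in outline, and it in fact supplies more argument than the paper itself does: the paper states this lemma (and its scheme-level counterpart in the previous section) without any proof, the neighbouring lemmas simply deferring to \cite[Sections 1.2--1.3]{Abe1} with the remark that everything extends from the abelian level. Your route --- take the exact, filtered-colimit-preserving ind-extension of $f^{+}$, derive it to a colimit-preserving functor of presentable stable $\infty$-categories via \cite[Section 1.3.5]{Lurie2}, produce $\underline{\underline{f^\wedge_+}}$ by the adjoint functor theorem of \cite{Lurie1}, and deduce left c-t-exactness from the (right) c-t-exactness of the left adjoint by the orthogonality computation $\mathrm{Hom}(N,\underline{\underline{f^\wedge_+}}M)\simeq \mathrm{Hom}(\underline{\underline{f^+}}N,M)=0$ --- is exactly the natural formalization of that remark, and the adjunction step is fine as written since $\mathbb{D}^{\geq 0}$ is the right orthogonal of $\mathbb{D}^{\leq -1}$ for any t-structure. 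The genuinely stack-specific content, which you correctly isolate, is the assembly of the levelwise right adjoints over the homotopy-limit presentation of $\mathcal{X}$ and $\mathcal{Y}$: this requires the Beck--Chevalley transformations for the atlas squares to be equivalences. Two small caveats there: in Abe's formalism the base change that holds unconditionally pairs $f_{+}$ with $g^{!}$, and for a smooth atlas map $g$ of relative dimension $d$ one has $g^{!}\simeq g^{+}[2d](d)$, so the descent data for the pushforward must be written with the shifted and twisted pullback (this is how the categories over stacks are organized in \cite{Abe}); and the citations to \cite{Ked4}, \cite{Ked5}, \cite{Ked6}, \cite{Ked7} are inputs to the existence of the six-functor formalism itself rather than to the base-change step. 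Neither point is a gap in your argument, which I would accept as a complete proof sketch of a statement the paper leaves unproved.
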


\indent Then we consider the \'etale morphism:

\begin{lemma}
Let $f$ as a morphism as above that is \'etale, then we have that first the pullback functor $f^+:\mathrm{Ind}\mathrm{Con}(\mathcal{Y})\rightarrow \mathrm{Ind}\mathrm{Con}(\mathcal{X})$ which admits a left adjoint functor over the level of derived $\infty$-categories $\underline{\underline{f^\wedge_{!}}}: \mathbb{D}(\mathrm{Ind}\mathrm{Con}(\mathcal{X}))\rightarrow \mathbb{D}(\mathrm{Ind}\mathrm{Con}(\mathcal{Y}))$ .	
\end{lemma}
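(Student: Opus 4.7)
The plan is to bootstrap from the scheme-level adjunction of Abe \cite[Section 1.2.2]{Abe1}, which provides, for an \'etale map $f:X\to Y$ of quasi-projective $k$-schemes, a left adjoint $f_!$ to $f^+$ on the level of constructible holonomic complexes. First I would upgrade that adjunction from the constructible setting to the Ind-category level: since $\mathrm{Con}(X)$ and $\mathrm{Con}(Y)$ are essentially small and $f_!$ on constructibles preserves colimits (being a left adjoint), the universal property of $\mathrm{Ind}$ produces a unique colimit-preserving extension $\underline{\underline{f_!^{\wedge}}}:\mathrm{Ind}\,\mathrm{Con}(X)\to \mathrm{Ind}\,\mathrm{Con}(Y)$ left adjoint to the Ind-extension of $f^+$, whose existence was already recorded in the previous lemma. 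At this point the adjunction sits at the level of ordinary abelian (in fact Grothendieck) categories.

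Second, I would promote the adjunction to the derived $\infty$-categorical level by invoking \cite[Section 1.3.5]{Lurie2}. Since $\mathrm{Ind}\,\mathrm{Con}(X)$ has enough projectives (or, dually, since one can use Reedy/injective model structures and derived functor technology in the stable $\infty$-category of chain complexes), the functor $\underline{\underline{f_!^{\wedge}}}$ admits a left derived functor $\mathbb{L}\underline{\underline{f_!^{\wedge}}}:\mathbb{D}(\mathrm{Ind}\,\mathrm{Con}(X))\to \mathbb{D}(\mathrm{Ind}\,\mathrm{Con}(Y))$. The adjunction between derived functors then follows from the $\infty$-categorical adjoint functor theorem applied to presentable stable $\infty$-categories: $\mathbb{D}(\mathrm{Ind}\,\mathrm{Con}(X))$ and $\mathbb{D}(\mathrm{Ind}\,\mathrm{Con}(Y))$ are presentable, and $\mathbb{L}\underline{\underline{f_!^{\wedge}}}$ preserves small colimits, whence a right adjoint exists and agrees with the derived extension of the original $f^+$.

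Third, I would pass to the stack $\mathcal{X}\to \mathcal{Y}$ by using the functor-of-points/homotopy-limit presentation: one chooses compatible smooth atlases $X_\bullet\to\mathcal{X}$ and $Y_\bullet\to\mathcal{Y}$ with $f$ induced levelwise, defines the relevant $\infty$-categories as the homotopy limit $\underset{i}{\mathrm{homotopylimit}}\mathbb{D}(\mathrm{Ind}\,\mathrm{Con}(X_i))$, and argues that the levelwise left adjoints $(f_i)_!^{\wedge}$ glue. Because \'etale morphisms are stable under base change and the base-change/smooth descent formalism for Abe's $\mathcal{D}$-modules holds \cite[Section 1.2]{Abe1}, the family $(f_i)_!^{\wedge}$ is compatible with the transition functors defining the homotopy limit, which produces a functor $\underline{\underline{f_!^\wedge}}:\mathbb{D}(\mathrm{Ind}\,\mathrm{Con}(\mathcal{X}))\to \mathbb{D}(\mathrm{Ind}\,\mathrm{Con}(\mathcal{Y}))$. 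The adjunction with $f^+$ then follows pointwise.

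The hard part will be the third step: left adjoints do not \emph{a priori} interact with homotopy limits, so one must instead encode the construction via the Beck--Chevalley condition for the smooth base-change squares in the atlas. The key check is that, for any \'etale square arising in the simplicial atlas, the natural exchange transformation for $(-)_!^{\wedge}$ is an equivalence; this is exactly the $\infty$-categorical form of the classical \'etale base change for $f_!$, and it reduces to Abe's base-change theorem for schemes together with the fact that smooth pullback is t-exact for the constructible t-structure. Once Beck--Chevalley is in hand, Lurie's straightening--unstraightening machinery glues the $(f_i)_!^{\wedge}$ into the desired functor and realizes the adjunction internally in the $\infty$-category of presentable stable $\infty$-categories.
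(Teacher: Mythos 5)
Your outline is correct in substance, and it follows the route the paper itself gestures at -- but the paper's entire proof is the single sentence ``everything extends from the level of abelian category to this setting, see Abe 1.2.2,'' identical to the scheme case, so you have supplied essentially all of the actual content. What you add beyond the paper: (i) the explicit Ind-extension of Abe's \'etale adjunction $(f_!,f^+)$ using the universal property of $\mathrm{Ind}$; (ii) the passage to derived $\infty$-categories via presentability and the adjoint functor theorem; and most importantly (iii) the recognition that the stack case is not formal, because left adjoints do not pass through the homotopy-limit presentation of $\mathbb{D}(\mathrm{Ind}\,\mathrm{Con}(\mathcal{X}))$, so one must verify the Beck--Chevalley condition for the squares formed by the \'etale map against the smooth atlas maps, using base change for $f_!$ in Abe's formalism and the c-t-exactness of smooth pullback. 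That third point is precisely the stack-specific issue the paper's one-line proof leaves unaddressed, and your proposal handles it the standard way. Two small repairs: the parenthetical claim that $\mathrm{Ind}\,\mathrm{Con}(X)$ has enough projectives is unjustified (a Grothendieck category is only guaranteed enough injectives), so you should lean entirely on your alternative route -- either c-t-exactness of $f_!$ for \'etale $f$, so no left derivation is needed, or the colimit-extension plus adjoint-functor-theorem argument you already state; and in step three it is cleaner to take the atlas of $\mathcal{X}$ to be the pullback $Y_\bullet\times_{\mathcal{Y}}\mathcal{X}$ of an atlas of $\mathcal{Y}$, so that the levelwise maps are automatically \'etale and the Cartesian squares needed for Beck--Chevalley are built in.
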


\begin{proof}
Everything extends from the level of abelian category to this setting. Also see \cite[1.2.2]{Abe1}.	
\end{proof}

\indent Finally for the proper morphisms we have:

\begin{lemma}
Let $f$ as a morphism above that is proper, then the $!$-pullback functor $\underline{\underline{f^!}}:\mathbb{D}(\mathrm{Ind}\mathrm{Con}(\mathcal{Y}))\rightarrow \mathbb{D}(\mathrm{Ind}\mathrm{Con}(\mathcal{X}))$ is well defined.	
\end{lemma}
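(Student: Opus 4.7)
The plan is to bootstrap from the scheme-level statement of Abe \cite[Section 1.1.3, Section 1.2.2]{Abe1} up through the ind-completion, and then up through the homotopy-limit presentation used to define $\mathrm{Con}(\mathcal{X})$ on stacks. First I would recall that for a proper morphism $g:X\rightarrow Y$ of quasi-projective schemes over $k$, Abe constructs the $!$-pullback $g^!:D^\mathrm{b}_\mathrm{hol}(Y/K)\rightarrow D^\mathrm{b}_\mathrm{hol}(X/K)$ and verifies it preserves constructibility; promoting to $\mathrm{Con}$ and then to $\mathrm{Ind}\,\mathrm{Con}$ is automatic because $\mathrm{Ind}$ takes exact, accessible functors to exact, accessible functors.

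Next I would promote this to the derived $\infty$-categorical level. Since $\mathrm{Ind}\,\mathrm{Con}(Y)$ is a Grothendieck abelian category, one applies \cite[Section 1.3.2, Section 1.3.5]{Lurie2} to get $\mathbb{D}(\mathrm{Ind}\,\mathrm{Con}(Y))$; the $!$-pullback at the abelian level, together with its derived functor (using injective resolutions on the target), yields a well-defined $\infty$-functor
\begin{align}
\underline{\underline{g^!}}:\mathbb{D}(\mathrm{Ind}\,\mathrm{Con}(Y))\rightarrow \mathbb{D}(\mathrm{Ind}\,\mathrm{Con}(X)),
\end{align}
and standard adjunction arguments show it agrees with Abe's $g^!$ on the constructible subcategory. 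The key input here is that Abe's six operations for proper morphisms are compatible with smooth base change and with the constructible $t$-structure, which passes through $\mathrm{Ind}$ by general nonsense.

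Then I would globalize to stacks. The morphism $f:\mathcal{X}\rightarrow \mathcal{Y}$ is proper, so for any atlas $Y_i\rightarrow \mathcal{Y}$ with pullback $X_i:=\mathcal{X}\times_{\mathcal{Y}} Y_i \rightarrow Y_i$, each $f_i:X_i\rightarrow Y_i$ is a proper morphism of schemes. Using the homotopy-limit presentation
\begin{align}
\mathbb{D}(\mathrm{Ind}\,\mathrm{Con}(\mathcal{X}))\simeq \underset{X_i}{\mathrm{homotopylimit}}\,\mathbb{D}(\mathrm{Ind}\,\mathrm{Con}(X_i)),
\end{align}
the family $\{\underline{\underline{f_i^!}}\}$ assembles into a morphism of homotopy limits provided the base change isomorphisms
\begin{align}
v^+\circ f_i^! \simeq f_j^!\circ v^+
\end{align}
hold for every transition map $v:Y_j\rightarrow Y_i$, which follows from the proper base change package already established by Abe on the scheme level. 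This produces the desired functor $\underline{\underline{f^!}}$.

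The main obstacle I anticipate is verifying that the base-change coherences assemble into a genuine simplicial/$\infty$-categorical datum, not just a 1-categorical one, so that the homotopy limit actually receives $\underline{\underline{f^!}}$ as a morphism in the $\infty$-category of stable $\infty$-categories. Concretely, one needs the higher coherences of proper base change for Abe's $!$-functor; since Abe works strictly with triangulated categories, this needs to be upgraded using the usual machinery of relative nerves or $\infty$-categorical enhancements of derived functors from \cite[Section 1.3]{Lurie2}, essentially exploiting that the ind-completion lives in the world of presentable stable $\infty$-categories where limits and right adjoints behave well. Once this coherence is in place, the well-definedness of $\underline{\underline{f^!}}$ on $\mathbb{D}(\mathrm{Ind}\,\mathrm{Con}(\mathcal{Y}))$ follows formally.
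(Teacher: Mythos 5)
The paper itself offers no argument for this lemma at all: it is stated bare, with the well-definedness implicitly delegated to Abe's scheme-level six-functor formalism and to the homotopy-limit definition of $\mathbb{D}(\mathrm{Ind}\,\mathrm{Con}(\mathcal{X}))$ over presentations $X_i$. Your overall strategy --- scheme level from \cite{Abe1}, Ind- and $\infty$-enhancement via \cite[Section 1.3]{Lurie2}, then assembly over the homotopy limit using base-change coherences --- is exactly the route the paper's framework suggests, and your explicit flagging of the higher-coherence problem for proper base change is a point the paper silently skips; in that respect your sketch is more honest than the text it is meant to support.

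Two steps, however, are genuinely problematic as written. First, you propose to produce $\underline{\underline{f^!}}$ as the right derived functor, via injective resolutions, of an abelian-level $!$-pullback on $\mathrm{Ind}\,\mathrm{Con}$. For arithmetic $\mathcal{D}$-modules there is no abelian-level $!$-pullback whose derived functor recovers Abe's $f^!$: the functor $f^!$ is constructed directly on the triangulated (or derived $\infty$-) level, either as the right adjoint of $f_!=f_+$ for $f$ proper or via duality as $\mathbf{D}\circ f^+\circ \mathbf{D}$, and it is not t-exact for any of the relevant t-structures. The correct formal mechanism in this setting is the adjoint functor theorem in presentable stable $\infty$-categories applied to the ind-extension of $f_+$ (which preserves colimits), not a resolution argument; this also automatically gives the agreement with Abe's $f^!$ on constructible objects by uniqueness of adjoints. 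Second, your reduction assumes that each pullback $X_i:=\mathcal{X}\times_{\mathcal{Y}}Y_i\rightarrow Y_i$ is a proper morphism of \emph{schemes}. For a general morphism of algebraic stacks this fiber product is again a stack, so the claim only goes through if $f$ is representable (or after a further layer of descent along a presentation of $\mathcal{X}$ itself, i.e.\ a bisimplicial/hypercover argument). Since the paper's lemma is stated for arbitrary proper $f:\mathcal{X}\rightarrow\mathcal{Y}$, you need either to add the representability hypothesis or to iterate the homotopy-limit argument in both variables; as it stands this step is a genuine gap in the proposal, though one that can be repaired with the same machinery you already invoke.
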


\newpage
\section{Arithmetic of Rigid Gross $G$-Motives: $\mathrm{Bun}_G$}

\indent We now define the corresponding rigid Gross $G$-motives as in \cite{Gro1} and \cite{GL1} in the corresponding $\ell$-adic situation. In the analogy of the corresponding situation in \cite{GL1} we use the language of the corresponding arithmetic $D$-modules. The relatively different aspect is that we have to focus on the corresponding $E_\infty$-algebras with the augmentation\footnote{In fact, the sheaves of rings of differential operators whatever the forms are actually unfortunately noncommutative, therefore maybe one should really focus on the corresponding larger categories of the corresponding $A_\infty$-algebras with the augmentation.}. Our idea is to study the cohomology with non-trivial coefficients in the category of holonomic arithmetic $\mathcal{D}$-modules, for instance we use the notation $\mathcal{E}\in \mathbb{D}_{\mathrm{hol}}(X)$ to denote such an module in the $p$-adic setting. Our main conjecture is that we have the following well defined statement:

\begin{conjecture}
\begin{align}
\mathrm{Tr}(\varphi^{-1}|\mathrm{H}^*(\mathrm{Bundle}_G(&X\times_{\mathrm{Spec}\mathbb{F}_p} \mathrm{Spec}\overline{\mathbb{F}}_p),\mathrm{pr}_*h_{\mathrm{Bundle}_G}^*\mathcal{E}))\\
&=\prod_{x\in X} \mathrm{Tr}(\varphi^{-1}|\mathrm{H}^*(\mathrm{Bundle}_G(\{x\}\times_{\mathrm{Spec}\mathbb{F}_p)} \mathrm{Spec}\overline{\mathbb{F}}_p),h_x^*{\mathcal{E}}_x)).	
\end{align}
\end{conjecture}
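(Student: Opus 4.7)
The plan is to parallel Gaitsgory-Lurie's $\ell$-adic proof of Weil's conjecture \cite{GL1}, now inside the stable $\infty$-category $\mathbb{D}_{\mathrm{hol}}(\mathrm{Bundle}_G)$ of holonomic arithmetic $\mathcal{D}$-modules constructed in the previous section, and to leverage the $p$-adic Grothendieck-Lefschetz trace formula of \cite{Abe1} to convert a cohomological product identity into the trace identity of the conjecture. When $\mathcal{E} = \mathbf{Dual}_p$, the identity is already accessible by the $\ell$-independence observation \cite[Section 3.2]{Abe1}; the task is therefore to upgrade that argument to a general augmented $E_\infty$-coefficient $\mathcal{E}\in \mathbb{D}_{\mathrm{hol}}(X)$.

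\textbf{Key steps.} First I would construct the refined rigid Gross $G$-motive $\mathcal{M}(G)^{\mathrm{Gross}}_\mathcal{E}\in \mathbb{D}_{\mathrm{hol}}(X/K)$ as the pushforward $\mathrm{pr}_{+} h_{\mathrm{Bundle}_G}^{*}\mathcal{E}$ along the structural projection from the universal $G$-torsor over $X$, using the $\infty$-categorical six-functor formalism for stacks just established. Second, I would prove the cotangent-fiber quasi-isomorphism
$$\mathrm{cot}\mathrm{C}^*(\mathrm{Bundle}_G(\overline{X}),h_{\mathrm{Bundle}_G}^{*}\mathcal{E}) \overset{\sim}{\longrightarrow} \mathrm{C}^*(\overline{X},\mathcal{M}(G)^{\mathrm{Gross}}_\mathcal{E}),$$
which is the $p$-adic analog of Gaitsgory-Lurie's non-abelian Poincaré duality. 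Third, the augmented $E_\infty$-structure on $\mathcal{E}$ lifts to a factorization algebra structure on $\mathcal{M}(G)^{\mathrm{Gross}}_\mathcal{E}$ over $X$, and cohomology of a factorization algebra on a curve computes as an Euler product over closed points, yielding
$$L_{\mathcal{M}(G),\mathrm{Frob}^{-1},\mathcal{E}}(t) = \prod_{x\in X}\det\bigl(1-t\,\mathrm{Frob}_x^{-1}\mid \mathrm{H}^*(\overline{X},\mathcal{M}(G_x)_{\mathcal{E}_x})\bigr)^{-1}.$$
Finally, applying Abe's $p$-adic Lefschetz trace formula to both the left-hand cohomology on $\mathrm{Bundle}_G(\overline{X})$ and to each local factor, and specialising $t=1$, would recover the conjectural trace identity.

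\textbf{Main obstacle.} The hard step is the non-abelian Poincaré duality quasi-isomorphism, because the $\ell$-adic argument in \cite{GL1} proceeds through the Ran space $\mathrm{Ran}(X)$, configuration-space spectral sequences, and a chiral-algebra description of $\mathrm{Bundle}_G$, none of which have a ready counterpart in arithmetic $\mathcal{D}$-modules. My approach would be to define $\mathbb{D}_{\mathrm{hol}}(\mathrm{Ran}(X))$ via the homotopy-limit presentation for stacks introduced above, then push forward along the Beilinson-Drinfeld map $\mathrm{Bundle}_G \to \mathrm{Ran}(X)$ using Abe's proper base change and relative Verdier duality for holonomic $\mathcal{D}$-modules. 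A secondary difficulty, addressable via Kedlaya's resolution of Shiho's conjecture \cite{Ked4,Ked5,Ked6,Ked7} combined with the six-functor formalism of \cite{Abe1,AC1}, is the control of overconvergence along the boundary of the non-quasi-compact stack $\mathrm{Bundle}_G$, which is precisely what forces one to work inside the ind-enlargement $\overline{\mathbb{D}}_{\mathrm{hol}}$ rather than $\mathbb{D}_{\mathrm{hol}}$ itself; the finiteness of the cotangent complex above should then follow from an infinitesimal contractibility argument in the arithmetic $\mathcal{D}$-module setting, paralleling the Atiyah-Bott style reduction used in the $\ell$-adic case.
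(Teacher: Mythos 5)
The statement you are proving is left as a conjecture in the paper; the only case the paper actually establishes is the one where $\mathcal{E}$ is an isocrystal, and there the route is completely different from yours: one invokes the \'etale--crystalline companions results (Deligne's petits camarades conjecture \cite{De1}, proved by Kedlaya in \cite{Ked11}, \cite{Ked12}) to transfer the coefficient to the $\ell$-adic side and then quotes Gaitsgory--Lurie's trace formula \cite[Theorem 1.4.4.1]{GL1} verbatim, with no new geometric machinery on the $p$-adic side. Your proposal instead tries to rerun the entire Gaitsgory--Lurie program natively in arithmetic $\mathcal{D}$-modules, which is more ambitious but, as written, postpones exactly the points that make the statement a conjecture rather than a theorem. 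Concretely: (i) you need a category $\mathbb{D}_{\mathrm{hol}}(\mathrm{Ran}(X))$ with six operations, but Abe's formalism \cite{Abe1} is built for quasi-projective schemes and admissible finite-type stacks, and the homotopy-limit construction for algebraic stacks used in the paper does not apply to the Ran space, which is not an object of that type; simply declaring the category ``via the homotopy-limit presentation'' does not produce base change, K\"unneth, or homotopy invariance there. (ii) The non-abelian Poincar\'e duality quasi-isomorphism and the factorization-homology Euler product are not lemmas one can cite; they are the main content of \cite{GL1} and rest on \'etale homotopy invariance and Verdier duality over prestacks, none of which is currently available for arithmetic $\mathcal{D}$-modules, so steps two and three of your plan are essentially restatements of the conjecture rather than reductions of it.

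Two further gaps would remain even granting that machinery. First, $\mathrm{Bundle}_G(\overline{X})$ is not quasi-compact and its cohomology is nonzero in infinitely many degrees, so the trace on the left-hand side and the convergence of the Euler product at $t=1$ require separate quantitative input (in the $\ell$-adic case this comes from purity bounds); for a general augmented $\mathbb{E}_\infty$-coefficient $\mathcal{E}$ the well-definedness of both sides is itself part of what must be proved, and Abe's Lefschetz trace formula is only available for holonomic complexes on finite-type stacks, so ``applying Abe's $p$-adic Lefschetz trace formula'' to $\mathrm{Bun}_G$ is not yet a licensed move. Second, a smaller point: the rigid Gross $G$-motive is defined in the paper as the cotangent fibre of $h_*h^*\mathcal{E}$, not as the bare pushforward $\mathrm{pr}_+h_{\mathrm{Bundle}_G}^*\mathcal{E}$; the passage to the cotangent fibre (indecomposables of the augmented algebra) is what makes the finiteness statement plausible, so your first step should be corrected accordingly. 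If your aim is an unconditional result with the tools presently available, the efficient path is the paper's: restrict to coefficients admitting companions and reduce to \cite{GL1}; the native $p$-adic argument you outline is the right long-term program but each of its three key steps is an open construction, not a citation.
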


The corresponding cohomology groups and rings are defined in the following:

\begin{definition}
As in \cite[Construction 3.2.5.1]{GL1}, we define the following cohomology groups associated to the corresponding arithmetic $D$-module in the $\infty$-category involved by taking throughout all the functors of the corresponding points associated withe involving stacks:
\begin{align}
\mathrm{H}^*(\mathrm{Bundle}_G(&X\times_{\mathrm{Spec}\mathbb{F}_p} \mathrm{Spec}\overline{\mathbb{F}}_p),\mathrm{pr}_*h_{\mathrm{Bundle}_G}^*\mathcal{E})\\
&:=\underset{X_i}{\mathrm{homotopylimit}}(*)\mathrm{H}^*(X_{i,\mathrm{Bundle}_G(X\times_{\mathrm{Spec}\mathbb{F}_p} \mathrm{Spec}\overline{\mathbb{F}}_p)},\mathrm{pr}_*h_{\mathrm{Bundle}_G}^*\mathcal{E})
\end{align}
where we have the following notations of morphisms:
\begin{align}
h_{\mathrm{Bundle}_G}:\mathrm{Bundle}_G(X)\times X\rightarrow \mathrm{Classifyingstack}_G\rightarrow X,\\
\mathrm{pr}:\mathrm{Bundle}_G(X)\times X\rightarrow \mathrm{Bundle}_G(X),\\
\mathrm{pr}': \mathrm{Bundle}_G(X)\times X\rightarrow X.	
\end{align}
	
\end{definition}

\begin{definition}
As in \cite[Construction 3.2.5.1]{GL1}, we define the following cohomology groups associated to the corresponding arithmetic $D$-module in the $\infty$-category involved by taking throughout all the functors of the corresponding points associated withe involving stacks:
\begin{align}
\mathrm{H}^*(\mathrm{Bundle}_G(&\{x\}\times_{\mathrm{Spec}\mathbb{F}_p)} \mathrm{Spec}\overline{\mathbb{F}}_p),h_x^*{\mathcal{E}}_x)\\
&:=\underset{X_i}{\mathrm{homotopylimit}}(*)\mathrm{H}^*(X_{i,\mathrm{Bundle}_G(\{x\}\times_{\mathrm{Spec}\mathbb{F}_p)} \mathrm{Spec}\overline{\mathbb{F}}_p)},h_x^*{\mathcal{E}}_x)
\end{align}
where we have the following notations of morphisms\footnote{Certainly up to taking the corresponding base change to the corresponding algebraically closed field.}:
\begin{align}
h:\mathrm{Classifyingstack}_G\rightarrow X,\\
h_x:\mathrm{Bundle}_G(\{x\})\rightarrow \{x\}.\\	
\end{align}
	
\end{definition}

Note that the cohomology groups involved for the the stack $\mathrm{Bundle}_G(\overline{X})$ is essentially nonzero in infinite degrees as in the $\ell$-adic setting in \cite{GL1} but once we consider the cotangent complex we could have more detailed description on that:

\begin{conjecture}
We now here assume that the object $\mathcal{E}$ is an augmented commutative $\mathbb{E}_\infty$-algebra.
\begin{displaymath}
\mathrm{H}^{*}(\mathrm{cot}\mathrm{C}^*(\mathrm{Bundle}_G(X\times_{\mathrm{Spec}\mathbb{F}_p} \mathrm{Spec}\overline{\mathbb{F}}_p),\mathrm{pr}_*h_{\mathrm{Bundle}_G}^*\mathcal{E}))
\end{displaymath}
is finite dimensional over $\mathbb{Q}_p$.
\end{conjecture}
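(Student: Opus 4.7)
The plan is to imitate the strategy that Gaitsgory and Lurie use in the $\ell$-adic setting, transporting it into the $\infty$-categorical arithmetic $\mathcal{D}$-module framework established in the previous sections. The key structural input is the quasi-isomorphism already hinted at in the motivation paragraph:
\begin{displaymath}
\mathrm{cot}\mathrm{C}^*(\mathrm{Bundle}_G(X\times_{\mathrm{Spec}\mathbb{F}_p}\mathrm{Spec}\overline{\mathbb{F}}_p),\mathrm{pr}_*h_{\mathrm{Bundle}_G}^*\mathcal{E})\overset{\mathrm{quasi}}{\longrightarrow} \mathrm{C}^*(X\times_{\mathrm{Spec}\mathbb{F}_p}\mathrm{Spec}\overline{\mathbb{F}}_p,\mathcal{M}(G)^\mathrm{Gross}_\mathcal{E}),
\end{displaymath}
where $\mathcal{M}(G)^\mathrm{Gross}_\mathcal{E}$ is the refined rigid Gross $G$-motive with $\mathcal{E}$-coefficients. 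Granting this, finite-dimensionality follows from the finiteness of cohomology of a smooth proper curve with coefficients in a holonomic arithmetic $\mathcal{D}$-module, which is classical in the theory of \cite{Abe1} and carries over to the $\infty$-enhancement $\mathbb{D}^\mathrm{b}_\mathrm{hol}$ built in Section 2.2.

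First I would make sense of $\mathrm{C}^*(\mathrm{Bundle}_G(\overline{X}),\mathrm{pr}_*h_{\mathrm{Bundle}_G}^*\mathcal{E})$ as an augmented commutative $\mathbb{E}_\infty$-algebra in $\overline{\mathbb{D}}_\mathrm{hol}$, using the augmentation on $\mathcal{E}$ together with a distinguished point of $\mathrm{Bun}_G$ (the trivial bundle, available after passing to an \'etale atlas); this uses the homotopy limit presentation of $\mathbb{D}_\mathrm{hol}(\mathcal{X})$ from Section 3.1 so that the functoriality of $\mathrm{pr}_*$ and $h^*_{\mathrm{Bundle}_G}$ is visible. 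Next I would take the cotangent complex in the sense of \cite{Lurie2}, which computes $\mathcal{E}$-linear derivations of this augmented $\mathbb{E}_\infty$-algebra. To prove the displayed quasi-isomorphism I would use a factorization-type decomposition of $\mathrm{Bun}_G$ over $X$, combined with the smoothness of $h_{\mathrm{Bundle}_G}$, to reduce the derivation functor to an integral over $X$ of pointwise contributions; these pointwise contributions assemble, by construction, into the Gross $G$-motive $\mathcal{M}(G)^\mathrm{Gross}_\mathcal{E}$, which is a holonomic arithmetic $\mathcal{D}$-module on $X$ by the six-functor formalism recalled in Sections 2.2 and 3.1.

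The hard part will be the reduction to the Gross motive. The stack $\mathrm{Bun}_G$ is smooth but not quasi-compact, so the cotangent complex must be assembled from a quasi-compact exhaustion $\{\mathrm{Bun}_G^{\leq n}\}$, and one must verify that the resulting object in $\overline{\mathbb{D}}_\mathrm{hol}(\mathrm{Bun}_G(\overline{X}))$ is independent of the exhaustion. This is the $p$-adic analogue of the convergence results in \cite[Chapter 3]{GL1}, and in the $\ell$-adic setting it ultimately rests on contractibility statements for spaces of rational maps into $G$. Transporting these contractibility statements into the holonomic $p$-adic setting is the real technical obstacle: one needs uniform solvability and overconvergence estimates of the type formulated in the earlier chapter (cf. \cref{proposition4.15}) in order to control the limit, since a priori the arithmetic $\mathcal{D}$-module pullbacks along the substratification do not extend as tamely as $\ell$-adic sheaves do.

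Once the quasi-isomorphism and the holonomicity of $\mathcal{M}(G)^\mathrm{Gross}_\mathcal{E}$ are established, the conclusion is short: $\overline{X}$ is smooth proper of dimension one, so $\mathrm{C}^*(\overline{X},\mathcal{M}(G)^\mathrm{Gross}_\mathcal{E})$ is concentrated in finitely many degrees with finite-dimensional cohomology over $\mathbb{Q}_p$ by the finiteness theorems for holonomic arithmetic $\mathcal{D}$-modules over proper schemes, applied inside the $\infty$-enhancement $\mathbb{D}^\mathrm{b}_\mathrm{hol}(\overline{X})$. This last step is the only part of the proof that I expect to be essentially routine given the setup of the paper.
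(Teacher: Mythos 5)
The statement you are addressing is recorded in the paper as a conjecture, not a theorem: the paper offers no proof, only the remark that finiteness would follow from the quasi-isomorphism $\mathrm{cot}\mathrm{C}^*(\mathrm{Bundle}_G(\overline{X}),\mathrm{pr}_*h^*_{\mathrm{Bundle}_G}\mathcal{E})\rightarrow \mathrm{C}^*(\overline{X},\mathcal{M}(G)^{\mathrm{Gross}}_{\mathcal{E}})$, together with the explicit caveat that this quasi-isomorphism ``is not obvious at all, since actually this is essentially the main thing to prove.'' Your proposal follows exactly this plan and therefore does not go beyond what the paper already records; in particular it is a strategy outline, not a proof. The genuine gap is the quasi-isomorphism itself: you reduce the conjecture to it, sketch a factorization/exhaustion argument modelled on \cite[Chapter 3]{GL1}, and then explicitly leave unresolved the $p$-adic analogues of the convergence and contractibility inputs (independence of the quasi-compact exhaustion of $\mathrm{Bun}_G$, the contractibility statements for spaces of rational maps underlying nonabelian Poincar\'e duality) that carry the entire weight of the argument in the $\ell$-adic setting. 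Until those are established inside $\overline{\mathbb{D}}_{\mathrm{hol}}$, both the displayed quasi-isomorphism and the finiteness statement remain conjectural, which is precisely why the paper states them as such.

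Two further concrete problems with the writeup. First, your appeal to \cref{proposition4.15} for ``uniform solvability and overconvergence estimates'' is misplaced: that proposition is a categorical equivalence between derived categories of $(\varphi,\nabla)$-modules and holonomic arithmetic $D$-modules over a stacky disk, and it supplies no quantitative control of the sort your limiting argument over the exhaustion would require. Second, even granting the quasi-isomorphism, your concluding step presupposes that $\mathcal{M}(G)^{\mathrm{Gross}}_{\mathcal{E}}=\mathrm{cotangentfibre}(h_*h^*\mathcal{E})$ lies in $\mathbb{D}^{\mathrm{b}}_{\mathrm{hol}}(\overline{X})$, i.e.\ is bounded with holonomic cohomology; you assert this ``by the six-functor formalism,'' but $h$ is a non-representable morphism from a classifying stack, $h_*h^*\mathcal{E}$ is a priori only an object of the ind-completion, and boundedness together with holonomicity of its cotangent fibre is the $p$-adic counterpart of the constructibility of the Gross motive, which in the $\ell$-adic case is itself something Gaitsgory--Lurie must prove. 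Without that input, the finiteness theorem for holonomic coefficients on a smooth proper curve cannot be invoked, so even the ``routine'' last step of your argument is not yet available.
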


This is essentially true since we will establish the following quasi-isomorphism:
\begin{displaymath}
\mathrm{cot}\mathrm{C}^*(\mathrm{Bundle}_G(X\times_{\mathrm{Spec}\mathbb{F}_p} \mathrm{Spec}\overline{\mathbb{F}}_p),h_{\mathrm{Bundle}_G}^*\mathcal{E})\overset{\mathrm{quasi}}{\longrightarrow} \mathrm{C}^*((X\times_{\mathrm{Spec}\mathbb{F}_p} \mathrm{Spec}\overline{\mathbb{F}}_p),\mathcal{M}(G)^\mathrm{Gross}_\mathcal{E})
\end{displaymath}
where the latter is the refined rigid Gross $G$-motive with more general coefficients which is a $p$-adic analog of Gross $G$-motive in the $\ell$-adic setting defined in \cite{Gro1}. However, this quasi-isomorphism is not obvious at all, since actually this is essentially the main thing to prove.

\begin{definition}
We now here assume that the object $\mathcal{E}$ is an augmented commutative $\mathbb{E}_\infty$-algebra.
\begin{align}
\mathcal{M}(G)^\mathrm{Gross}_\mathcal{E}:= \mathrm{cotangentfibre}(h_*h^*\mathcal{E}).	
\end{align}
where $h: \mathrm{BC}_{\overline{k}}\mathrm{Classifyingstack}_G \rightarrow \mathrm{BC}_{\overline{k}} X$.
\end{definition}

 A little bit weaker result will be the following Euler product formula for the L-functions attached to the rigid Gross $G$-motive (which we will define in our setting which is an $\infty$-$p$-adic arithmetic $\mathcal{D}$-module) which we define in the following:

\begin{definition}
We now here assume that the object $\mathcal{E}$ is an augmented commutative $\mathbb{E}_\infty$-algebra. We define the rigid $L$-function attached to rigid Gross $G$-motive (in general $p$-adic coefficient) as the following:
\begin{displaymath}
L_{\mathcal{M}(G),\mathrm{Frob}^{-1},\mathcal{E}}(t):=\mathrm{det}(1-t\mathrm{Frob}^{-1}|\mathrm{H}^*(\overline{X},\mathcal{M}(G)_\mathcal{E})^{-1}
\end{displaymath}
\end{definition}

Then we will show that we have the following Euler product formula for this $L$-function as in the following:

\begin{align}
L_{\mathcal{M}(G),\mathrm{Frob}^-1,\mathcal{E}}(t)&:=\mathrm{det}(1-t\mathrm{Frob}^{-1}|\mathrm{H}^*(\overline{X},\mathcal{M}(G)_\mathcal{E})^{-1}\\
&=\prod_{x\in X}\mathrm{det}(1-t\mathrm{Frob}_x^{-1}|\mathrm{H}^*(\overline{X},\mathcal{M}(G_x)_{\mathcal{E}_x})^{-1}.
\end{align}

\indent Actually if $\mathcal{E}$ is just the trivial coefficient, then the special value $L_{\mathcal{M}(G),\mathrm{Frob}^{-1},\mathcal{E}}(1)$ is actually just the inverse of $\prod_{x\in X}\frac{|\mathrm{Bundle}_G(\{x\})(\kappa(x))|}{q^{\mathrm{dim}\mathrm{Bundle}_G(\kappa(x))}}$.\\

\indent We now look at some results as consequences of \cite{De1}, \cite{Ked11} and \cite{Ked12}. Namely we now assume $\mathcal{E}$ is some isocrystal $E$:

\begin{proposition}
\begin{align}
\mathrm{Tr}(\varphi^{-1}|\mathrm{H}^*(\mathrm{Bundle}_G(&X\times_{\mathrm{Spec}\mathbb{F}_p} \mathrm{Spec}\overline{\mathbb{F}}_p),\mathrm{pr}_*h_{\mathrm{Bundle}_G}^*E))\\
&=\prod_{x\in X} \mathrm{Tr}(\varphi^{-1}|\mathrm{H}^*(\mathrm{Bundle}_G(\{x\}\times_{\mathrm{Spec}\mathbb{F}_p)} \mathrm{Spec}\overline{\mathbb{F}}_p),h_x^*{E}_x)).	
\end{align}
\end{proposition}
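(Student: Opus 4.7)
The plan is to combine the theory of $p$-adic companions after Deligne and Kedlaya \cite{De1, Ked11, Ked12} with a coefficient-sensitive form of the Gaitsgory-Lurie trace formula \cite{GL1}. First I would attach to the isocrystal $E$ over $X$ a compatible system of $\ell$-adic lisse sheaves $\{E_\ell\}_{\ell \neq p}$ on $X$ such that at every closed point $x \in |X|$ the characteristic polynomials of geometric Frobenius on $E_x$ and on $E_{\ell,x}$ coincide; Kedlaya's proof of Deligne's petits camarades conjecture provides exactly such companions.

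Next I would transfer the right-hand side between the $p$-adic and $\ell$-adic settings. Because $\mathrm{Bundle}_G(\{x\}_{\overline{k}})$ is the classifying stack $BG_x$ over $\kappa(x)$, the local factor
\[
\mathrm{Tr}\bigl(\varphi^{-1}\bigm|\mathrm{H}^*(\mathrm{Bundle}_G(\{x\}_{\overline{k}}), h_x^* E_x)\bigr)
\]
depends only on the characteristic polynomial of Frobenius on the fiber $E_x$ together with the combinatorics of $BG_x$, identically in either cohomology theory, following the $\ell$-independence observations recalled around \cref{corollary2.4} and \cite[Section 3.2]{Abe1}. Hence the product side attached to $E$ coincides term-by-term with the product side attached to any of its $\ell$-adic companions $E_\ell$.

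For the left-hand side I would invoke a coefficient-enriched form of \cite[Theorem 1.4.4.1]{GL1}. The Gaitsgory-Lurie argument handles the constant case by identifying the cohomology of $\mathrm{Bundle}_G(\overline{X})$ with factorization homology of a Ran-type object built from the Gross $G$-motive; this identification is functorial in coefficient systems pulled back from $X$, so tensoring through by $E_\ell$ gives the corresponding product formula with coefficients. Applying Grothendieck-Lefschetz on $\overline{X}$ yields the Euler product in terms of the local factors computed in the previous paragraph. Companion compatibility at every closed point, together with the Euler product representation of both sides as products of local $L$-factors, then forces the $p$-adic trace on the left to agree with the $\ell$-adic one, completing the chain of equalities.

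The main obstacle is the coefficient-generalized Gaitsgory-Lurie step. Their original formalism is calibrated for the constant sheaf $\mathbb{Q}_\ell$; carrying a nontrivial coefficient $E_\ell$ from the curve up to $\mathrm{Bundle}_G$ requires upgrading the factorization/Ran computation to a module over the symmetric monoidal $\infty$-category of sheaves twisted by $E_\ell$. Concretely, one must verify the quasi-isomorphism
\[
\mathrm{cot}\mathrm{C}^*(\mathrm{Bundle}_G(X\times_{\mathrm{Spec}\mathbb{F}_p}\mathrm{Spec}\overline{\mathbb{F}}_p), h^*_{\mathrm{Bundle}_G}\mathcal{E}) \overset{\mathrm{quasi}}{\longrightarrow} \mathrm{C}^*(X\times_{\mathrm{Spec}\mathbb{F}_p}\mathrm{Spec}\overline{\mathbb{F}}_p, \mathcal{M}(G)^{\mathrm{Gross}}_{\mathcal{E}})
\]
conjectured above in the generality needed, which is where the augmented $\mathbb{E}_\infty$-structure on the coefficient enters essentially and where the bulk of the real work is concentrated.
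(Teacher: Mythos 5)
Your proposal follows essentially the same route as the paper's own proof: replace the isocrystal $E$ by its $\ell$-adic companions via Kedlaya's resolution of Deligne's petits camarades conjecture \cite{De1}, \cite{Ked11}, \cite{Ked12}, matching Frobenius characteristic polynomials at all closed points, and then reduce to the $\ell$-adic product formula of Gaitsgory--Lurie \cite[Theorem 1.4.4.1]{GL1}. Your further observation that \cite{GL1} is calibrated for constant coefficients, so that what is really invoked is a coefficient-enriched form of their trace formula, is a fair flagging of a step the paper's two-sentence proof leaves implicit rather than a departure from its method.
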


\begin{proof}
Since now $E$ is some isocrystal, therefore one can apply results of conjectures of petits camarades from \cite[Conjecture 1.2.10]{De1} as proved in \cite{Ked11} and \cite[Theorem 0.1.1, Theorem 0.1.2]{Ked12}. Then one reduces the corresponding proposition to the $\ell$-adic situation as in \cite[Theorem 1.4.4.1]{GL1}.	
\end{proof}

\begin{proposition}
We now here assume that the object $E$ is an augmented commutative $\mathbb{E}_\infty$-algebra. Then we have the following product formula:
\begin{align}
L_{\mathcal{M}(G),\mathrm{Frob}^{-1},\mathcal{E}}(t)&:=\mathrm{det}(1-t\mathrm{Frob}^{-1}|\mathrm{H}^*(\overline{X},\mathcal{M}(G)_\mathcal{E})^{-1}\\
&=\prod_{x\in X}\mathrm{det}(1-t\mathrm{Frob}_x^{-1}|\mathrm{H}^*(\overline{X},\mathcal{M}(G_x)_{\mathcal{E}_x})^{-1}.
\end{align}
\end{proposition}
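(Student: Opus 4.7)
The plan is to deduce this Euler product formula from the trace identity established in the previous proposition by the standard logarithmic passage between traces of Frobenius powers and characteristic polynomials. The key observation is that for any graded endomorphism $F$ acting on a finite-dimensional graded vector space $V^*$, one has the formal identity
\begin{align}
\log \prod_i \det(1 - tF \mid V^i)^{(-1)^{i+1}} = \sum_{n \geq 1} \frac{t^n}{n} \sum_i (-1)^i \mathrm{Tr}(F^n \mid V^i),
\end{align}
so the two sides of the desired product formula will agree term by term in $t$ if and only if, for every $n \geq 1$, the analogous trace identity holds after replacing $\mathrm{Frob}^{-1}$ by $\mathrm{Frob}^{-n}$.

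First I would establish the claimed quasi-isomorphism
\begin{displaymath}
\mathrm{cot}\mathrm{C}^*(\mathrm{Bundle}_G(\overline{X}), h_{\mathrm{Bundle}_G}^*E) \overset{\mathrm{quasi}}{\longrightarrow} \mathrm{C}^*(\overline{X}, \mathcal{M}(G)^{\mathrm{Gross}}_E),
\end{displaymath}
exploiting that $E$ is an augmented $\mathbb{E}_\infty$-algebra so that forming the cotangent fibre is compatible with pushforward along the classifying-stack map $h: \mathrm{Classifyingstack}_G \to X$, following the strategy of \cite{GL1}. Granted this, the cohomology $\mathrm{H}^*(\overline{X}, \mathcal{M}(G)_{\mathcal{E}})$ computing the $L$-function is identified with the cotangent fibre of $\mathrm{H}^*(\mathrm{Bundle}_G(\overline{X}), \mathrm{pr}_*h_{\mathrm{Bundle}_G}^*E)$, and this identification is $\mathrm{Frob}^{-1}$-equivariant by functoriality of the cotangent fibre functor in the $\infty$-category $\mathbb{D}_{\mathrm{hol}}(-)$.

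Next, for each $n \geq 1$, base change to $\mathbb{F}_{p^n}$ and apply the previous proposition (which uses Kedlaya's resolution of Deligne's petits camarades conjecture from \cite{Ked11}, \cite{Ked12} to reduce to the $\ell$-adic Lurie--Gaitsgory theorem \cite[Theorem 1.4.4.1]{GL1}) at the level of $\mathrm{Frob}^{-n}$-traces. The factorization on both sides is compatible with taking cotangent fibres in the setting of augmented $\mathbb{E}_\infty$-algebras, since the cotangent fibre is a Frobenius-equivariant exact functor on such objects. Summing over $n$ via the logarithmic identity above and exponentiating then yields the Euler product factorization of $L_{\mathcal{M}(G), \mathrm{Frob}^{-1}, \mathcal{E}}(t)$; the local factors on the right produced by this procedure match $\det(1 - t\mathrm{Frob}_x^{-1} \mid \mathrm{H}^*(\overline{X}, \mathcal{M}(G_x)_{\mathcal{E}_x}))^{-1}$ because of the local-to-global identification of $L$-factors with $\mathrm{Frob}_x$-traces established in \cref{proposition5.14} and \cref{corollary5.15}.

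The main obstacle, I expect, is the cotangent-fibre quasi-isomorphism identifying the rigid Gross $G$-motive with the Andr\'e--Quillen type cotangent complex of $h_{\mathrm{Bundle}_G}^*E$: the total cohomology of $\mathrm{Bundle}_G(\overline{X})$ with $E$-coefficients lives in infinitely many degrees as in the $\ell$-adic case of \cite{GL1}, so one must work carefully with augmented $\mathbb{E}_\infty$-algebras inside $\mathbb{D}_{\mathrm{hol}}(-)$ to ensure the cotangent fibre is finite-dimensional and that $\mathrm{Frob}^{-1}$ acts with traces summing to something meaningful. The noncommutativity of the sheaves of differential operators (flagged in the footnotes of the paper) means that one may in fact need the $A_\infty$-version of the argument rather than the strictly commutative $\mathbb{E}_\infty$-version. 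A secondary subtlety is verifying convergence of the infinite product, which should follow from standard weight estimates on holonomic Frobenius modules once the local-to-global trace identification is in place.
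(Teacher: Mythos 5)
Your route is not the one the paper takes, and as written it has a genuine gap. The engine of your argument is the cotangent-fibre quasi-isomorphism identifying $\mathrm{cot}\mathrm{C}^*(\mathrm{Bundle}_G(X\times_{\mathrm{Spec}\mathbb{F}_p}\mathrm{Spec}\overline{\mathbb{F}}_p),h_{\mathrm{Bundle}_G}^*\mathcal{E})$ with $\mathrm{C}^*((X\times_{\mathrm{Spec}\mathbb{F}_p}\mathrm{Spec}\overline{\mathbb{F}}_p),\mathcal{M}(G)^{\mathrm{Gross}}_{\mathcal{E}})$, but the paper explicitly flags that quasi-isomorphism as ``essentially the main thing to prove'' and does not establish it; you cannot lean on it to prove this proposition, and your proposal offers no argument for it beyond announcing that you would establish it following Gaitsgory--Lurie. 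Likewise, your matching of the local factors with $\mathrm{det}\left(1-t\mathrm{Frob}_x^{-1}|\mathrm{H}^*(\overline{X},\mathcal{M}(G_x)_{\mathcal{E}_x})\right)^{-1}$ cannot come from \cref{proposition5.14} and \cref{corollary5.15}: those results live in the Chapter~1 setting of $(\varphi,\nabla)$-modules over Robba rings of the punctured disc and the quotient stack $X_k/\varphi_k$, and say nothing about the local Gross-motive factors attached to closed points in the $\mathrm{Bun}_G$ setting. Finally, the log-exponential passage from the trace identity of the previous proposition to an identity of characteristic-polynomial $L$-functions would require the trace statement for every power $\mathrm{Frob}^{-n}$ \emph{on the cohomology of $\mathcal{M}(G)_{\mathcal{E}}$ itself}, i.e.\ a Lefschetz-type local-global comparison for the Gross motive, which is again exactly the unproven bridge between $\mathrm{Bun}_G$-cohomology and $\mathrm{H}^*(\overline{X},\mathcal{M}(G)_{\mathcal{E}})$.

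The paper's own proof sidesteps all of this. The hypothesis actually in force (stated in the paragraph preceding the proposition) is that the coefficient $\mathcal{E}$ is an isocrystal $E$; one then invokes the crystalline-to-\'etale companions furnished by Kedlaya's resolution of Deligne's petits camarades conjecture (\cite{Ked11}, \cite[Theorem 0.1.1, Theorem 0.1.2]{Ked12}), which produce an $\ell$-adic companion having the same Frobenius characteristic polynomials at all closed points, hence the same local factors and global determinant, and the product formula then reduces verbatim to the $\ell$-adic statement \cite[Theorem 4.5.3.1]{GL1}. To repair your argument you should either make this companion reduction directly under the isocrystal hypothesis, or acknowledge that your version is conditional on the conjectural cotangent-fibre quasi-isomorphism and on a Frobenius-equivariant local-global trace formula for $\mathcal{M}(G)_{\mathcal{E}}$.
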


\begin{proof}
Since now $E$ is some isocrystal, therefore one can apply results of conjectures of petits camarades from \cite[Conjecture 1.2.10]{De1} as proved in \cite{Ked11} and \cite[Theorem 0.1.1, Theorem 0.1.2]{Ked12}. Then one reduces the corresponding proposition to the $\ell$-adic situation as in \cite[Theorem 4.5.3.1]{GL1}.	
\end{proof}

\newpage
\section{Arithmetic of Rigid Gross $G$-Motives: $\mathrm{Bun}^\mathrm{Parabolic,P}_G$}

\indent We now define the corresponding rigid Gross $G$-motives as in \cite{Gro1} and \cite{GL1} in the corresponding $\ell$-adic situation. And we will consider the corresponding context of moduli of parabolic $G$-bundles as in \cite[Section 5.5.1]{GL1}. Therefore we will fix some parabolic $P$ as in \cite[Section 5.5.1]{GL1} which is denoted by $P_0$\footnote{Suitable inner form $G$ namely the corresponding adjoint semisimple one will give us the chance to reduce everything to the previous section by considering $P$-bundles as in \cite[Example 5.5.1.8]{GL1}.}. In the analogy of the corresponding situation in \cite{GL1} we use the language of the corresponding arithmetic $D$-modules. The relatively different aspect is that we have to focus on the corresponding $E_\infty$-algebras with the augmentation\footnote{In fact, the sheaves of rings of differential operators whatever the forms are actually unfortunately noncommutative, therefore maybe one should really focus on the corresponding larger categories of the corresponding $A_\infty$-algebras with the augmentation.}. Our idea is to study the cohomology with non-trivial coefficients in the category of holonomic arithmetic $\mathcal{D}$-modules, for instance we use the notation $\mathcal{E}\in \mathbb{D}_{\mathrm{hol}}(X)$ to denote such an module in the $p$-adic setting. Our main conjecture is that we have the following well defined statement:

\begin{conjecture}
\begin{align}
\mathrm{Tr}(\varphi^{-1}|\mathrm{H}^*(\mathrm{Bundle}^\mathrm{Parabolic,P}_G(X\times_{\mathrm{Spec}\mathbb{F}_p} \mathrm{Spec}\overline{\mathbb{F}}_p),\mathrm{pr}_*h_{\mathrm{Bundle}^\mathrm{Parabolic,P}_G}^*\mathcal{E}))\\
=\prod_{x\in X} \mathrm{Tr}(\varphi^{-1}|\mathrm{H}^*(\mathrm{Bundle}^\mathrm{Parabolic,P}_G(\{x\}\times_{\mathrm{Spec}\mathbb{F}_p)} \mathrm{Spec}\overline{\mathbb{F}}_p),h_x^*{\mathcal{E}}_x)).	
\end{align}
\end{conjecture}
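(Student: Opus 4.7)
The plan is to adapt the strategy of the immediately preceding proposition (the product formula for $\mathrm{Bundle}_G$) to the parabolic setting, following the blueprint of \cite[Section 5.5]{GL1}. First, I would reduce the general coefficient system $\mathcal{E}$ to the case of an isocrystal $E$ and invoke the petits camarades conjecture of Deligne \cite[Conjecture 1.2.10]{De1}, proved by Kedlaya in \cite{Ked11} and \cite[Theorem 0.1.1, Theorem 0.1.2]{Ked12}, to produce an $\ell$-adic companion $E_\ell$ of $E$ whose Frobenius eigenvalues match those of $E$ stratum-by-stratum. The desired $p$-adic trace identity is then matched termwise with the $\ell$-adic identity on both the global side and each local factor, reducing the problem to the Lurie-Gaitsgory parabolic product formula, which is \cite[Theorem 4.5.3.1]{GL1} combined with the parabolic generalization of \cite[Section 5.5]{GL1}.

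Next, I would set up the cohomological apparatus on the parabolic moduli stack using the $\infty$-categorical formalism established in the preceding sections. Explicitly, one replaces $\mathrm{Bundle}_G$ throughout the cotangent-fibre argument of the previous section by $\mathrm{Bundle}^{\mathrm{Parabolic},P}_G$, defining a parabolic rigid Gross motive $\mathcal{M}^{\mathrm{Parabolic},P}(G)^{\mathrm{Gross}}_\mathcal{E}$ as the cotangent fibre of $h_*h^*\mathcal{E}$ where $h$ now factors through the classifying stack of $P$ rather than $G$. As flagged in the footnote after the conjecture statement, when $G$ is adjoint semisimple the parabolic moduli may be reinterpreted as $P$-bundles along the lines of \cite[Example 5.5.1.8]{GL1}, reducing the problem to the preceding proposition applied with $G$ replaced by $P$; in the general case one instead uses the forgetful fibration from the parabolic moduli to $\mathrm{Bundle}_G$ together with the proper base change for arithmetic $D$-modules on stacks from \cite{Abe} to express both sides of the identity in terms of $\mathrm{Bundle}_G$ and local flag data at the chosen marked points.

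The third step is the Grothendieck-Lefschetz trace formula itself. Since $\mathrm{Bundle}^{\mathrm{Parabolic},P}_G$ is smooth but not quasi-compact over $\mathbb{F}_p$, one must verify convergence of the infinite product on the right and interchange the trace with this product on the left, parallel to the convergence arguments of \cite[Section 4]{GL1}. This rests on the cotangent-fibre quasi-isomorphism anticipated in the non-parabolic case, extended so that the parabolic Gross motive $\mathcal{M}^{\mathrm{Parabolic},P}(G)^{\mathrm{Gross}}_\mathcal{E}$ represents the relative cochain complex of the parabolic moduli with coefficients in $h^*\mathcal{E}$; this is the substantive input that makes the Euler product absolutely convergent and lets one match local factors with the Frobenius trace at each point of $X$.

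The principal obstacle is the cotangent-fibre quasi-isomorphism and the associated finiteness in the $p$-adic $\infty$-categorical setting itself. In the $\ell$-adic proof of Lurie-Gaitsgory, this depends on a delicate nonabelian Poincar\'e duality for $\mathrm{Bundle}_G$ together with homological estimates propagating through a hypercover of the moduli stack. Porting these to the category of holonomic arithmetic $D$-modules requires that the six-functor formalism of \cite{Abe1} and its $\infty$-categorical enhancement developed above extend to non-quasi-compact smooth Artin stacks presented by functors of points, and that the partial Frobenius actions interact compatibly with the parabolic decomposition at each $x\in X$. Neither extension is automatic: both rest on resolving the convergence and descent issues that are already conjectural in the non-parabolic proposition above, and both must be secured before the petits-camarades reduction yields the parabolic product formula unconditionally.
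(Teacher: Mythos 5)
There is a genuine gap here, and it is worth being precise about its nature. The statement you are addressing is stated in the paper as a \emph{conjecture}, with no proof given; the only cases the paper actually proves (in the non-parabolic $\mathrm{Bundle}_G$ section) are those where the coefficient $\mathcal{E}$ is an isocrystal $E$, precisely because that is the only situation in which the petits camarades machinery of \cite{De1}, \cite{Ked11}, \cite{Ked12} applies: companions are available for overconvergent $F$-isocrystals (coefficient objects with Frobenius structure and suitable determinant/algebraicity conditions), not for an arbitrary object of $\mathbb{D}_{\mathrm{hol}}(X)$, let alone for an augmented $\mathbb{E}_\infty$-algebra object. Your first step, ``reduce the general coefficient system $\mathcal{E}$ to the case of an isocrystal $E$,'' is therefore not a reduction at all --- no mechanism is offered for passing from general holonomic coefficients to isocrystals, and this is exactly the point at which the paper itself stops short of a theorem and retreats to a conjecture. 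Moreover, even after such a reduction, the $\ell$-adic statement you propose to ``match termwise'' does not exist in the cited form: \cite[Theorem 4.5.3.1]{GL1} and the surrounding material prove the product formula with constant (trivial) coefficients, and \cite[Section 5.5]{GL1} handles the reduction to parabolic/Bruhat--Tits integral models again with trivial coefficients; there is no coefficient-enriched parabolic trace identity in \cite{GL1} to compare against, so the comparison step has no target.

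Your third and fourth paragraphs correctly identify the remaining obstructions (the cotangent-fibre quasi-isomorphism identifying $\mathcal{M}(G)^{\mathrm{Gross,Parabolic}}_{\mathcal{E}}$ with the relative cochains, the convergence of the Euler product, and the extension of the six-functor and $\infty$-categorical formalism of \cite{Abe1}, \cite{Abe} to the non-quasi-compact stack $\mathrm{Bundle}^{\mathrm{Parabolic},P}_G$ with its partial Frobenius actions), but identifying them is not the same as resolving them: as you yourself concede, each of these inputs is conjectural, and the argument you outline is conditional on all of them simultaneously. So what you have written is a plausible strategy consistent with how the paper treats the non-parabolic case, not a proof; to turn it into one you would need, at minimum, (i) a statement and proof of an $\ell$-adic parabolic product formula with the relevant nonconstant coefficients, or an independent $p$-adic argument avoiding companions entirely, and (ii) the parabolic nonabelian Poincar\'e duality / cotangent-fibre comparison in the arithmetic $\mathcal{D}$-module setting, neither of which is supplied here or in the paper.
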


The corresponding cohomology groups and rings are defined in the following:

\begin{definition}
As in \cite[Construction 3.2.5.1]{GL1}, we define the following cohomology groups associated to the corresponding arithmetic $D$-module in the $\infty$-category involved by taking throughout all the functors of the corresponding points associated withe involving stacks:
\begin{align}
\mathrm{H}^*(\mathrm{Bundle}^\mathrm{Parabolic,P}_G(&X\times_{\mathrm{Spec}\mathbb{F}_p} \mathrm{Spec}\overline{\mathbb{F}}_p),\mathrm{pr}_*h_{\mathrm{Bundle}^\mathrm{Parabolic,P}_G}^*\mathcal{E})\\
&:=\underset{X_i}{\mathrm{homotopylimit}}(*)\mathrm{H}^*(X_{i,\mathrm{Bundle}^\mathrm{Parabolic,P}_G(X\times_{\mathrm{Spec}\mathbb{F}_p} \mathrm{Spec}\overline{\mathbb{F}}_p)},\mathrm{pr}_*h_{\mathrm{Bundle}^\mathrm{Parabolic,P}_G}^*\mathcal{E})
\end{align}
where we have the following notations of morphisms:
\begin{align}
h_{\mathrm{Bundle}^\mathrm{Parabolic,P}_G}:\mathrm{Bundle}^\mathrm{Parabolic,P}_G(X)\times X\rightarrow \mathrm{Classifyingstack}^P_G\rightarrow X,\\
\mathrm{pr}:\mathrm{Bundle}^\mathrm{Parabolic,P}_G(X)\times X\rightarrow \mathrm{Bundle}^\mathrm{Parabolic,P}_G(X),\\
\mathrm{pr}': \mathrm{Bundle}^\mathrm{Parabolic,P}_G(X)\times X\rightarrow X.	
\end{align}
	
\end{definition}

\begin{definition}
As in \cite[Construction 3.2.5.1]{GL1}, we define the following cohomology groups associated to the corresponding arithmetic $D$-module in the $\infty$-category involved by taking throughout all the functors of the corresponding points associated withe involving stacks:
\begin{align}
\mathrm{H}^*(\mathrm{Bundle}^\mathrm{Parabolic,P}_G(&\{x\}\times_{\mathrm{Spec}\mathbb{F}_p)} \mathrm{Spec}\overline{\mathbb{F}}_p),h_x^*{\mathcal{E}}_x)\\
&:=\underset{X_i}{\mathrm{homotopylimit}}(*)\mathrm{H}^*(X_{i,\mathrm{Bundle}^\mathrm{Parabolic,P}_G(\{x\}\times_{\mathrm{Spec}\mathbb{F}_p)} \mathrm{Spec}\overline{\mathbb{F}}_p)},h_x^*{\mathcal{E}}_x)
\end{align}
where we have the following notations of morphisms\footnote{Certainly up to taking the corresponding base change to the corresponding algebraically closed field.}:
\begin{align}
h:\mathrm{Classifyingstack}^P_G\rightarrow X,\\
h_x:\mathrm{Bundle}^\mathrm{Parabolic,P}_G(\{x\})\rightarrow \{x\}.\\	
\end{align}
	
\end{definition}

Note that the cohomology groups involved for the the stack $\mathrm{Bundle}^\mathrm{Parabolic,P}_G(\overline{X})$ is essentially nonzero in infinite degrees as in the $\ell$-adic setting in \cite{GL1} but once we consider the cotangent complex we could have more detailed description on that:

\begin{conjecture}
We now here assume that the object $\mathcal{E}$ is an augmented commutative $\mathbb{E}_\infty$-algebra.
\begin{displaymath}
\mathrm{H}^{*}(\mathrm{cot}\mathrm{C}^*(\mathrm{Bundle}^\mathrm{Parabolic,P}_G(X\times_{\mathrm{Spec}\mathbb{F}_p} \mathrm{Spec}\overline{\mathbb{F}}_p),\mathrm{pr}_*h_{\mathrm{Bundle}^\mathrm{Parabolic,P}_G}^*\mathcal{E}))
\end{displaymath}
is finite dimensional over $\mathbb{Q}_p$.
\end{conjecture}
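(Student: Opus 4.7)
The plan is to mimic the strategy already announced in the non-parabolic case of $\mathrm{Bundle}_G$, but now with the parabolic classifying stack $\mathrm{Classifyingstack}^P_G$ replacing $\mathrm{Classifyingstack}_G$. First I would construct a parabolic analogue of the rigid Gross $G$-motive. Concretely, writing $h:\mathrm{BC}_{\overline{k}}\mathrm{Classifyingstack}^P_G\rightarrow \mathrm{BC}_{\overline{k}}X$ for the structure morphism and using that $\mathcal{E}$ is an augmented commutative $\mathbb{E}_\infty$-algebra in $\mathbb{D}_{\mathrm{hol}}(X)$, I would set
\begin{align}
\mathcal{M}(G,P)^{\mathrm{Gross}}_{\mathcal{E}}:=\mathrm{cotangentfibre}(h_* h^*\mathcal{E}),
\end{align}
and then establish the parabolic quasi-isomorphism
\begin{displaymath}
\mathrm{cot}\mathrm{C}^*\bigl(\mathrm{Bundle}^{\mathrm{Parabolic},P}_G(X\times_{\mathrm{Spec}\mathbb{F}_p}\mathrm{Spec}\overline{\mathbb{F}}_p),\,h_{\mathrm{Bundle}^{\mathrm{Parabolic},P}_G}^{*}\mathcal{E}\bigr)\overset{\mathrm{quasi}}{\longrightarrow}\mathrm{C}^{*}\bigl((X\times_{\mathrm{Spec}\mathbb{F}_p}\mathrm{Spec}\overline{\mathbb{F}}_p),\,\mathcal{M}(G,P)^{\mathrm{Gross}}_{\mathcal{E}}\bigr).
\end{displaymath}
This is the parabolic analogue of the main quasi-isomorphism already promised in the previous section, and the target is a cohomology of a $p$-adic holonomic arithmetic $\mathcal{D}$-module on the curve, hence finite dimensional over $\mathbb{Q}_p$ by the standard finiteness theorems for constructible objects in $\mathbb{D}_{\mathrm{hol}}$ in the sense of \cite{Abe1}.

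Second, to prove the quasi-isomorphism I would use the factorization of $h_{\mathrm{Bundle}^{\mathrm{Parabolic},P}_G}$ through $\mathrm{Classifyingstack}^P_G$, together with the local-to-global product description of $\mathrm{Bundle}^{\mathrm{Parabolic},P}_G$ as an iterated fiber product over points of $X$ of the local parabolic classifying stacks, exactly in the pattern of \cite[Section 5.5]{GL1}. Working at the level of the $\infty$-ind-category $\overline{\mathbb{D}}_\mathrm{hol}$, the six operations $f_!, f_+, f^!, f^+$ defined in the previous sections give the full formalism needed to push and pull along the maps $\mathrm{pr}, \mathrm{pr}', h$; then the cotangent fibre construction in the $\infty$-category of augmented $\mathbb{E}_\infty$-algebras commutes with homotopy limits over the functor-of-points diagram $X_i$ used to define $\mathbb{D}_{\mathrm{hol}}(\mathcal{X})$. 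This allows one to reduce the statement to the computation over each affine chart $X_i$, where the parabolic version of the Atiyah-Bott/Gross formalism in the $\ell$-adic setting from \cite{GL1} carries over line-by-line.

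Third, the inner-form reduction mentioned in the footnote allows a cross-check: picking a suitable inner form so that $P$-bundles recover the situation of $\mathrm{Bundle}_G$ in the previous section, the parabolic statement collapses to the non-parabolic conjecture, which provides a consistency test and also shows the parabolic conjecture is strictly a generalization rather than an independent statement. Finally, the specialization to the case where $\mathcal{E}$ is an isocrystal should follow, by invoking Deligne's petits camarades conjecture as proved in \cite{Ked11}, \cite{Ked12}, to transfer the finiteness known in the $\ell$-adic setting of \cite{GL1} to the $p$-adic setting on the nose.

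The main obstacle will be establishing the Gross-type quasi-isomorphism in the parabolic setting at the $\infty$-categorical level. In particular, the delicate point is that $\mathcal{E}$ lives in the $\infty$-category of augmented $\mathbb{E}_\infty$-algebras built out of the noncommutative sheaves $D_{\mathrm{con},...}$, so the cotangent fibre must be computed in the appropriate $A_\infty$-augmented refinement (as the footnote warns). Checking that $h_* h^*\mathcal{E}$ really acquires this augmented $\mathbb{E}_\infty$- (or $A_\infty$-) structure, and that the cotangent fibre functor commutes with the homotopy limits over the functor-of-points of the parabolic stack $\mathrm{Bundle}^{\mathrm{Parabolic},P}_G$, is the step that requires real work; once these compatibilities are in place the finite-dimensionality follows formally from the finiteness of cohomology of holonomic arithmetic $\mathcal{D}$-modules on the curve $X$.
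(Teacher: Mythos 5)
The statement you are proving is left as a conjecture in the paper: the text offers no proof, only the remark that finiteness would follow from the quasi-isomorphism $\mathrm{cot}\mathrm{C}^*(\mathrm{Bundle}^{\mathrm{Parabolic},P}_G(\overline{X}),h^*_{\mathrm{Bundle}^{\mathrm{Parabolic},P}_G}\mathcal{E})\rightarrow \mathrm{C}^*(\overline{X},\mathcal{M}(G)^{\mathrm{Gross,Parabolic}}_{\mathcal{E}})$, and it explicitly says that this quasi-isomorphism ``is not obvious at all, since actually this is essentially the main thing to prove.'' Your proposal retraces exactly this intended route (define $\mathcal{M}(G,P)^{\mathrm{Gross}}_{\mathcal{E}}=\mathrm{cotangentfibre}(h_*h^*\mathcal{E})$, establish the quasi-isomorphism, invoke finiteness for holonomic coefficients on the curve), but it does not close the gap: the quasi-isomorphism is merely announced, with its proof deferred to the claim that the parabolic Atiyah--Bott/Gross formalism of \cite{GL1} ``carries over line-by-line.'' In the $\ell$-adic setting that step rests on nonabelian Poincar\'e duality and a factorization-homology description of cochains on $\mathrm{Bun}_G$, together with the identification of $\mathrm{C}^*(BG)$ as a free augmented $\mathbb{E}_\infty$-algebra; none of these ingredients is currently available in the arithmetic $\mathcal{D}$-module, $\infty$-categorical framework set up in this paper, so asserting the transfer is not a proof, and you have in effect restated the conjecture rather than proved it.

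Two further concrete issues. First, even granting the quasi-isomorphism, finiteness of the target is not ``formal from the finiteness of cohomology of holonomic arithmetic $\mathcal{D}$-modules'': the pushforward $h_*h^*\mathcal{E}$ from the (parabolic) classifying stack has cohomology in infinitely many degrees, so one must actually prove that its cotangent fibre is a bounded holonomic (constructible) object of the derived category over $\overline{X}$; in \cite{GL1} this uses the freeness of $\mathrm{H}^*(BG)$ and the identification of the primitives with a finite complex, and the analogous statement here is precisely part of what must be established, not cited. Second, your final reduction via crystalline companions (\cite{Ked11}, \cite{Ked12}) applies only when $\mathcal{E}$ is an $F$-isocrystal --- which is how the paper itself treats that special case in the non-parabolic section --- so it cannot yield the conjecture for a general augmented $\mathbb{E}_\infty$- (or $A_\infty$-) algebra coefficient, which is the case actually asserted. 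Your third point, the inner-form consistency check, is fine as a sanity check but carries no independent probative force. As it stands, the key quasi-isomorphism and the boundedness/holonomicity of the parabolic rigid Gross motive are the missing steps, and they are the same steps the paper leaves open.
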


This is essentially true since we will establish the following quasi-isomorphism:
\begin{align}
\mathrm{cot}\mathrm{C}^*(\mathrm{Bundle}^\mathrm{Parabolic,P}_G(X\times_{\mathrm{Spec}\mathbb{F}_p} \mathrm{Spec}\overline{\mathbb{F}}_p),h_{\mathrm{Bundle}^\mathrm{Parabolic,P}_G}^*\mathcal{E})\overset{\mathrm{quasi}}{\longrightarrow} \mathrm{C}^*((X\times_{\mathrm{Spec}\mathbb{F}_p} \mathrm{Spec}\overline{\mathbb{F}}_p),\mathcal{M}(G)^\mathrm{Gross,Parabolic}_\mathcal{E})
\end{align}
where the latter is the refined rigid Gross $G$-motive with more general coefficients which is a $p$-adic analog of Gross $G$-motive in the $\ell$-adic setting defined in \cite{Gro1}. However, this quasi-isomorphism is not obvious at all, since actually this is essentially the main thing to prove.

\begin{definition}
We now here assume that the object $\mathcal{E}$ is an augmented commutative $\mathbb{E}_\infty$-algebra.
\begin{align}
\mathcal{M}(G)^\mathrm{Gross,Parabolic}_\mathcal{E}:= \mathrm{cotangentfibre}(h_*h^*\mathcal{E}).	
\end{align}
where $h: \mathrm{BC}_{\overline{k}}\mathrm{Classifyingstack}^P_G \rightarrow \mathrm{BC}_{\overline{k}} X$.
\end{definition}

 A little bit weaker result will be the following Euler product formula for the L-functions attached to the rigid Gross $G$-motive (which we will define in our setting which is an $\infty$-$p$-adic arithmetic $\mathcal{D}$-module) which we define in the following:

\begin{definition}
We now here assume that the object $\mathcal{E}$ is an augmented commutative $\mathbb{E}_\infty$-algebra. We define the rigid $L$-function attached to rigid Gross $G$-motive (in general $p$-adic coefficient) as the following:
\begin{displaymath}
L_{\mathcal{M}(G)^\mathrm{Gross,Parabolic},\mathrm{Frob}^{-1},\mathcal{E}}(t):=\mathrm{det}(1-t\mathrm{Frob}^{-1}|\mathrm{H}^*(\overline{X},\mathcal{M}(G)^\mathrm{Gross,Parabolic}_\mathcal{E})^{-1}
\end{displaymath}
\end{definition}

\begin{conjecture}
We now here assume that the object $\mathcal{E}$ is an augmented commutative $\mathbb{E}_\infty$-algebra.
\begin{align}
L_{\mathcal{M}(G),\mathrm{Frob}^-1,\mathcal{E}}(t)&:=\mathrm{det}(1-t\mathrm{Frob}^{-1}|\mathrm{H}^*(\overline{X},\mathcal{M}(G)^\mathrm{Gross,Parabolic}_\mathcal{E})^{-1}\\
&=\prod_{x\in X}\mathrm{det}(1-t\mathrm{Frob}_x^{-1}|\mathrm{H}^*(\overline{X},\mathcal{M}(G_x)^\mathrm{Gross,Parabolic}_{\mathcal{E}_x})^{-1}.
\end{align}

\end{conjecture}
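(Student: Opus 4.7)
The plan is to mirror the argument used just above for the non-parabolic case, replacing everywhere $\mathrm{Bundle}_G$ by $\mathrm{Bundle}^{\mathrm{Parabolic},P}_G$ and $\mathrm{Classifyingstack}_G$ by $\mathrm{Classifyingstack}^{P}_G$. First I would unwind the definition: by the displayed quasi-isomorphism
\begin{align*}
\mathrm{cot}\mathrm{C}^*\bigl(\mathrm{Bundle}^{\mathrm{Parabolic},P}_G(\overline{X}),h^{*}_{\mathrm{Bundle}^{\mathrm{Parabolic},P}_G}\mathcal{E}\bigr)
\overset{\mathrm{quasi}}{\longrightarrow}
\mathrm{C}^{*}\bigl(\overline{X},\mathcal{M}(G)^{\mathrm{Gross},\mathrm{Parabolic}}_{\mathcal{E}}\bigr),
\end{align*}
(assumed here as in the preceding section, where $\mathcal{E}$ is an augmented $\mathbb{E}_\infty$-algebra so that the cotangent fibre is defined), the right-hand $L$-factor is rewritten as the alternating characteristic polynomial of $\mathrm{Frob}^{-1}$ on the cotangent part of $\mathrm{H}^{*}(\mathrm{Bundle}^{\mathrm{Parabolic},P}_G(\overline{X}),\mathrm{pr}_*h^{*}_{\mathrm{Bundle}^{\mathrm{Parabolic},P}_G}\mathcal{E})$. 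The local factors on the Euler product side admit the same rewriting, now for the classifying stack $\mathrm{Classifyingstack}^{P}_G$ fiber by fiber.

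Next, I would reduce from the general coefficient $\mathcal{E}$ to the isocrystal case. For $\mathcal{E} = E$ an overconvergent $F$-isocrystal, the conjecture of petits camarades in \cite[Conjecture 1.2.10]{De1}, as established in \cite{Ked11} and \cite[Theorem 0.1.1, Theorem 0.1.2]{Ked12}, allows one to match $p$-adic local $L$-factors with their $\ell$-adic counterparts for every closed point $x\in X$. This step is exactly what worked in the previous section for the non-parabolic statement, and formally the same reduction applies here since the construction of $\mathcal{M}(G)^{\mathrm{Gross},\mathrm{Parabolic}}_{E}$ is functorial in the classifying stack. Under this reduction the identity of formal power series in $t$ that we want becomes the corresponding identity for $\ell$-adic sheaves.

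Then I would invoke the Gaitsgory--Lurie parabolic theorem \cite[Section 5.5]{GL1}, where the Euler product formula for the $L$-function of the Gross $G$-motive in the parabolic setting is established in the $\ell$-adic framework. Combining this with the previous step, the $p$-adic Euler product follows term by term for isocrystal coefficients. The extension to general augmented $\mathbb{E}_\infty$-coefficients $\mathcal{E}$ is accomplished by the cotangent fibre formalism: the $L$-function only depends on the cotangent fibre complex $\mathcal{M}(G)^{\mathrm{Gross},\mathrm{Parabolic}}_{\mathcal{E}}=\mathrm{cotangentfibre}(h_{*}h^{*}\mathcal{E})$, which is computed $\infty$-categorically inside $\mathbb{D}_{\mathrm{hol}}$, so the trace identity propagates from isocrystals to general $\mathbb{E}_\infty$-coefficients by taking colimits and using that $\mathrm{Frob}^{-1}$-trace is exact on bounded complexes with finite-dimensional cohomology.

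The main obstacle will be the finite dimensionality, i.e.\ controlling the cotangent fibre $\mathrm{H}^{*}(\overline{X},\mathcal{M}(G)^{\mathrm{Gross},\mathrm{Parabolic}}_{\mathcal{E}})$ in the parabolic setting. Unlike the reductive case, the parabolic classifying stack $\mathrm{Classifyingstack}^{P}_G$ is not semisimple; the Levi and unipotent parts of $P$ contribute separately, and a priori the cohomology of $h_{*}h^{*}\mathcal{E}$ could be unbounded. In the $\ell$-adic setting this is handled by the decomposition exploited in \cite[Example 5.5.1.8, Section 5.5]{GL1}, using that $P$-bundles can be described through successive extensions governed by the Levi quotient; I would transport this stratification argument through the $\infty$-categorical formalism developed in the preceding sections, using the $p$-adic six-functor formalism of \cite{Abe1} and the $\infty$-enrichment constructed earlier in this chapter. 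Once finite dimensionality is secured and the isocrystal case is matched with the $\ell$-adic one, the product formula follows and the conjecture is proved in the isocrystal case, with the general $\mathbb{E}_\infty$-coefficient case conditional on the quasi-isomorphism displayed above.
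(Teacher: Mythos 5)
You should first note that the paper offers no proof of this statement at all: in the parabolic section it is recorded as a conjecture, and only the non-parabolic analogues in the preceding section are proved, and there only under the additional hypothesis that the coefficient is an isocrystal $E$, by invoking the crystalline companions results (\cite[Conjecture 1.2.10]{De1}, \cite{Ked11}, \cite{Ked12}) to reduce to the $\ell$-adic statements of \cite{GL1}. So there is no paper proof to match, and your proposal has to stand on its own; as it stands it has genuine gaps rather than being a proof.

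Concretely: (1) Your argument is conditional on the displayed quasi-isomorphism $\mathrm{cot}\mathrm{C}^*(\mathrm{Bundle}^{\mathrm{Parabolic},P}_G(\overline{X}),h^*\mathcal{E})\to \mathrm{C}^*(\overline{X},\mathcal{M}(G)^{\mathrm{Gross,Parabolic}}_{\mathcal{E}})$ and on the finite-dimensionality of its cohomology, both of which the paper explicitly flags as unproven (``this is essentially the main thing to prove''); using them as inputs cannot yield an unconditional proof of the conjecture. (2) The reduction from general augmented $\mathbb{E}_\infty$-coefficients to isocrystals ``by taking colimits'' is not justified: the companions machinery applies to overconvergent $F$-isocrystals under specific hypotheses, not to arbitrary objects of $\mathbb{D}_{\mathrm{hol}}$, an arbitrary holonomic $\mathbb{E}_\infty$-coefficient is not presented as a trace-compatible colimit of isocrystals, and an identity of infinite Euler products does not pass through colimits merely because traces are additive on bounded complexes with finite-dimensional cohomology --- convergence and rearrangement of the product over all closed points has to be controlled at each stage. (3) The external input you invoke, an $\ell$-adic Euler product formula for the parabolic Gross motive with nontrivial coefficients in \cite[Section 5.5]{GL1}, is not available in that form; what \cite[Example 5.5.1.8]{GL1} provides (and what the paper's own footnote gestures at) is a device for rewriting parabolic bundles as bundles for a modified group scheme, so the honest route is to perform that reduction first and then quote the previous section's propositions, which again only cover isocrystal coefficients. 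This is precisely why the statement is left as a conjecture in the paper, and your proposal, while a reasonable sketch of a strategy, does not close any of these gaps.
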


\newpage
\section{Arithmetic of Rigid Gross $G$-Motives: $\mathrm{Bun}^\mathrm{Local}_{G(K_{X,x})}$}

\indent We now take a look at some local version. Here the interesting thing is that we will have $\mathrm{Bun}_G$ over Fargues-Fontaine curves in equal characteristic (namely the moduli of $t$-motivic $G$-bundles) as in \cite{FF}, \cite{FS}, \cite{GL2}, \cite{HP}, \cite{KL1}, \cite{KL2}, \cite{SW}, around some point $x\in X$. And we fix a local field $K_{X,x}$ coming from some point over $X$. Then for any $t_x$-adic Lie group $G(K_{X,x})$ we define the following rigid local Gross $G$-motive. Therefore now the object $\mathcal{E}$ will be over the Fargues-Fontaine curve $\mathrm{FF}_{\widehat{\mathcal{O}}_{X,x}}$\footnote{As in \cite{GL2}, which should be related to Scholze's definition through $v$-stacks as in \cite{FS}, \cite{SW} and \cite{Sch} and should be related to the construction in \cite{KL2}. Here we work over categories of formal schemes.} around some neighbourhood $\mathrm{Sp}(\widehat{\mathcal{O}}_{X,x})$ of $x$. Again we follow \cite{GL1} closely.

\begin{remark}
We regard the stack $\mathrm{FF}_{\widehat{\mathcal{O}}_{X,x}}$ as some formal stack by taking presentation from some formal scheme $Y$ to the quotient through Frobenius. Then we define the classifying stack $\mathrm{Classifyingstack}_{G(K_{X,x}),\mathrm{FF}_{\widehat{\mathcal{O}}_{X,x}}}$ over $\mathrm{FF}_{\widehat{\mathcal{O}}_{X,x}}$ by taking the corresponding quotient by the group $G(K_{X,x})$.	
\end{remark}

\begin{definition}
And we define $\mathrm{Bun}^\mathrm{Local}_{G(K_{X,x}),X,x}$ as the corresponding formal stack of $G(K_{X,x})$-bundles over the disc $\mathrm{Sp}(\widehat{\mathcal{O}}_{X,x})$. And we define $\mathrm{Bun}^\mathrm{Local}_{G(K_{X,x}),\mathrm{FF}_{\widehat{\mathcal{O}}_{X,x}}}$ as the corresponding formal stack of $G(K_{X,x})$-bundles over $\mathrm{FF}_{\widehat{\mathcal{O}}_{X,x}}$.
\end{definition}

\begin{definition}
We now here assume that the object $\mathcal{E}$ is an augmented commutative $\mathbb{E}_\infty$-algebra over $\mathrm{FF}_{\widehat{\mathcal{O}}_{X,x}}$.
\begin{align}
\mathcal{M}(G)^\mathrm{Gross,Local}_\mathcal{E}:= \mathrm{cotangentfibre}(h_*h^*\mathcal{E}).	
\end{align}
where $h: \mathrm{BC}_{\overline{k}}\mathrm{Classifyingstack}_{G(K_{X,x}),\mathrm{FF}_{\widehat{\mathcal{O}}_{X,x}}} \rightarrow \mathrm{BC}_{\overline{k}}\mathrm{FF}_{\widehat{\mathcal{O}}_{X,x}}$.
\end{definition}

 A little bit weaker result will be the following Euler product formula for the L-functions attached to the rigid Gross $G$-motive (which we will define in our setting which is an $\infty$-$p$-adic arithmetic $\mathcal{D}$-module) which we define in the following:

\begin{definition}
We now here assume that the object $\mathcal{E}$ is an augmented commutative $\mathbb{E}_\infty$-algebra. We define the rigid $L$-factor attached to rigid Gross $G$-motive (in general $p$-adic coefficient) as the following:
\begin{displaymath}
\mathrm{Factor}_{\mathcal{M}(G)^\mathrm{Gross,Local},\mathrm{Frob}^{-1},\mathcal{E}}(t):=\mathrm{det}(1-t\mathrm{Frob}^{-1}|\mathrm{H}^*(\overline{\mathrm{FF}_{\mathcal{O}_{X,x}}},\mathcal{M}(G)^\mathrm{Gross,Local}_\mathcal{E})^{-1}.
\end{displaymath}
\end{definition}

\begin{remark}
One may want to compare this with the one defined globally as in the previous section. Also one can define things over the disc $\mathrm{Sp}(\widehat{\mathcal{O}}_{X,x})$:
\begin{definition}
We now here assume that the object $\mathcal{E}$ is an augmented commutative $\mathbb{E}_\infty$-algebra over $\mathrm{Sp}(\widehat{\mathcal{O}}_{X,x})$.
\begin{align}
\mathcal{M}(G)^\mathrm{Gross,Local}_\mathcal{E}:= \mathrm{cotangentfibre}(h_*h^*\mathcal{E}).	
\end{align}
where $h: \mathrm{BC}_{\overline{k}}\mathrm{Classifyingstack}_{G(K_{X,x})}=\mathrm{BC}_{\overline{k}}\mathrm{Sp}(\widehat{\mathcal{O}}_{X,x})/{G(K_{X,x})} \rightarrow \mathrm{BC}_{\overline{k}}\mathrm{Sp}(\widehat{\mathcal{O}}_{X,x})$.
\end{definition}
\noindent A little bit weaker result will be the following Euler product formula for the L-functions attached to the rigid Gross $G$-motive (which we will define in our setting which is an $\infty$-$p$-adic arithmetic $\mathcal{D}$-module) which we define in the following:
\begin{definition}
We now here assume that the object $\mathcal{E}$ is an augmented commutative $\mathbb{E}_\infty$-algebra. We define the rigid $L$-factor attached to rigid Gross $G$-motive (in general $p$-adic coefficient) as the following:
\begin{displaymath}
\mathrm{Factor}_{\mathcal{M}(G)^\mathrm{Gross,Local},\mathrm{Frob}^{-1},\mathcal{E}}(t):=\mathrm{det}(1-t\mathrm{Frob}^{-1}|\mathrm{H}^*(\overline{\mathrm{Sp}(\widehat{\mathcal{O}}_{X,x})},\mathcal{M}(G)^\mathrm{Gross,Local}_\mathcal{E})^{-1}.
\end{displaymath}
\end{definition}	
\end{remark}

\newpage

\subsection*{Acknowledgements} 
This work is based on the conversation with Professor Kedlaya who conjectured that there should be a version of Drinfeld's lemma for $F$-isocrystals and suggested taking a look at the corresponding generalization of Abe's work on Langlands correspondence for $F$-isocrystals to general reductive groups. Also we benefit a lot from the corresponding work \cite{Ked9} from different perspectives. We would like to thank Professor Kedlaya for helpful discussion and many suggestions around this problem and related.

\newpage

\bibliographystyle{ams}

\end{document}